\newcommand{\sat}{\mathop{\mathrm{sat}}}
\newcommand{\trs}{\mathop{\mathrm{tr}}}
\newcommand{\satr}{\mathop{\mathrm{sat}}}
\newcommand{\norm}[1]{\ensuremath{\left\| #1 \right\|}}
\newcommand{\bracket}[1]{\ensuremath{\left[ #1 \right]}}
\newcommand{\braces}[1]{\ensuremath{\left\{ #1 \right\}}}
\newcommand{\refeqn}[1]{(\ref{eqn:#1})}
\newcommand{\SO}{\ensuremath{\mathsf{SO(3)}}}
\newcommand{\so}{\ensuremath{\mathfrak{so}(3)}}
\renewcommand{\Re}{\ensuremath{\mathbb{R}}}
\newcommand{\Sph}{\ensuremath{\mathsf{S}}}
\newcommand{\D}{\ensuremath{\mathbf{D}}}
\newcommand{\g}{\ensuremath{\mathfrak{g}}}
\newtheorem{prop}{Proposition}
\begin{document}

%% TITLE, AUTHOR, ABSTRACT, KEYWORDS

% the \title command
\title{Geometric Control of a Quadrotor UAV Transporting a Payload Connected via Flexible Cable}

% the \author command
\author{Farhad A. Goodarzi, Daewon Lee, and Taeyoung Lee%  <- don't remove this stop
		\thanks{Farhad A. Goodarzi, Daewon Lee, and Taeyoung Lee are with the School of Mechanical and aerospace
	Engineering, The George Washington University, Washington DC 20052
	 (e-mail: \{fgoodarzi,daewonlee,tylee\}@gwu.edu).}
	\thanks{\textsuperscript{\footnotesize\ensuremath{*}}This research has been supported in part by NSF under the grants CMMI-1243000 (transferred from 1029551), CMMI-1335008, and CNS-1337722.
}
}

% the abstract environment
\begin{abstract}
We derived a coordinate-free form of equations of motion for a complete model of a quadrotor UAV with a payload which is connected via a flexible cable according to Lagrangian mechanics on a manifold. The flexible cable is modeled as a system of serially-connected links and has been considered in the full dynamic model. A geometric nonlinear control system is presented to exponentially stabilize the position of the quadrotor while aligning the links to the vertical direction below the quadrotor. Numerical simulation and experimental results are presented and a rigorous stability analysis is provided to confirm the accuracy of our derivations. These results will be particularly useful for aggressive load transportation that involves large deformation of the cable.
%This paper deals with dynamics and control of a quadrotor UAV with a payload that is connected via a flexible cable, which is modeled as a system of serially-connected links. It is shown that a coordinate-free form of equations of motion can be derived for an arbitrary number of links according to Lagrangian mechanics on a manifold. A geometric nonlinear control system is also presented to asymptotically stabilize the position of the quadrotor while aligning the links to the vertical direction. These results will be particularly useful for aggressive load transportation that involves deformation of the cable. The desirable properties are illustrated by a numerical example and experimental result. 
\end{abstract}

% the keywords environment
\begin{keywords}
Geometric Nonlinear Control, Flexible Cable, Load Stabilization, Stability Analysis
\end{keywords}

% \maketitle command must come here after ABSTRACT and KEYWORDS environments
\maketitle

%% Theorem, Lemma, Remarks, etc.
\newtheorem{theorem}{Theorem}
\newtheorem{lemma}{Lemma}

%%% now, begins the document

%% SECTION
\section{Introduction}

Unmanned aerial vehicles (UAV) have been studied for different applications such as surveillance or mobile sensor networks as well as for educational purposes. Quadrotors are one kind of these UAVs which are very popular due to their dynamic simplicity, maneuverability and high performance. %Various control systems have been developed~\cite{LeeKimIJCAS09,TayMcGITCSTI06,PouMahACRA06}, and aggressive maneuvers are demonstrated experimentally by utilizing the high thrust-to-weight ratio of quadrotors~\cite{LeeLeoPICDC10,GilHuaPICRA10,MelMicIJRR12}.
%These properties of quadrotors are also desirable for load carrying and transportation. 
Areal transportation of a cable-suspended load has been studied traditionally for helicopters~\cite{CicKanJAHS95,BerPICRA09}. 
Recently, small-size single or multiple autonomous vehicles are considered for load transportation and deployment~\cite{PalCruIRAM12,MicFinAR11,MazKonJIRS10,MelShoDARSSTAR13}, and trajectories with minimum swing and oscillation of payload are generated~\cite{ ZamStaJDSMC08, SchMurIICRA12, PalFieIICRA12}. 

%Quadrotors have become popular area in research during these years due to their simple structure and high performance. Designing controllers which can stabilize and track a trajectory for different applications are considered and studied for these devices~\cite{c1}, \cite{c9}, \cite{c10}. 

%Several applications have been studied for them involving load transportation. One way is to considering multiple cooperative quadrotors carrying a payload considering a taut link~\cite{c4}, \cite{c22}. Researchers have been also studied stabilizing and track the trajectory of an external load with a quadrotor which is connected via a taut and rigid cable~\cite{c2}, \cite{c20}.  The cable connecting the payload to the quadrotor can be modeled as a single or multiple pendulums. The dynamics of double spherical pendulum ~\cite{c12}, ~\cite{c13} and triple inverted pendulums have been studied in~\cite{c14}, \cite{c15}.

Safe cooperative transportation of possibly large or bulky payloads is extremely important in various missions, such as military operations, search and rescue, mars surface explorations and personal assistance.
However, these results are based on the common and restrictive assumption that the cable connecting the payload to the quadrotor UAV is always taut and rigid. Also, the dynamic of the cable and payload are ignored and they are considered as bounded disturbances to the transporting vehicle. Therefore, they cannot be applied to aggressive, rapid load transportations where the cable is deformed or the tension along the cable is low, thereby restricting its applicability. As such, it is impossible to guarantee safety operations.
%This paper is focused on dynamics and control of a quadrotor UAV that is connected to a payload via flexible cable. 
It is challenging to incorporate the effects of a deformable cable, since the dimension of the configuration space becomes infinite. Finite element approximation of a cable often yields complicated equations of motion that make dynamic analysis and controller design extremely difficult.

Recently, a coordinate-free form of the equations of motion for a chain pendulum connected a cart that moves on a horizontal plane is presented according to Lagrangian mechanics on a manifold~\cite{LeeLeoPICDC12}. This paper is an extension of the prior work of the authors in~\cite{FarhadDTLee}. By following the similar approach, in this paper, the cable is modeled as an arbitrary number of links with different sizes and masses that are serially-connected by spherical joints, as illustrated at Figure \ref{fig:Quad}. The resulting configuration manifold is the product of the special Euclidean group for the position and the attitude of the quadrotor, and a number of two-spheres that describe the direction of each link. We present Euler-Lagrange equations of the presented quadrotor model that are globally defined on the nonlinear configuration manifold.

%can be developed in a remarkably compact form.% that is suitable for stability and control analysis.

%, which provides more straightforward computations for the controllability, stability analysis, and the controller design. We utilized Euler-Lagrange equations to develop the equations of motion for a quadrotor transporting a load via a flexible cable moving in 3D space~\cite{c8}. The flexible cable is modeled as an arbitrary number of links with different sizes and masses. These equations are 

%The problem of modeling a flexible cable as an arbitrary number of links has been studied by considering it as a chain pendulum on a cart~\cite{c3}, \cite{c6}, \cite{c7}. On the other hand, designing the controller to stabilize~\cite{c3}, \cite{c5}, \cite{c21}  or/and track the position of the payload have been a point of interest for many researchers~\cite{c16}, \cite{c17}, \cite{c18}. However there exists no compact form to model a quadrotor carrying an external load with a flexible cable and a controller that can asymptotically stabilize that system in 3D space.

\begin{figure}
\centerline{
	\setlength{\unitlength}{0.09\columnwidth}\scriptsize
\begin{picture}(5,7.5)(0,0)
\put(0,0){\includegraphics[width=0.45\columnwidth]{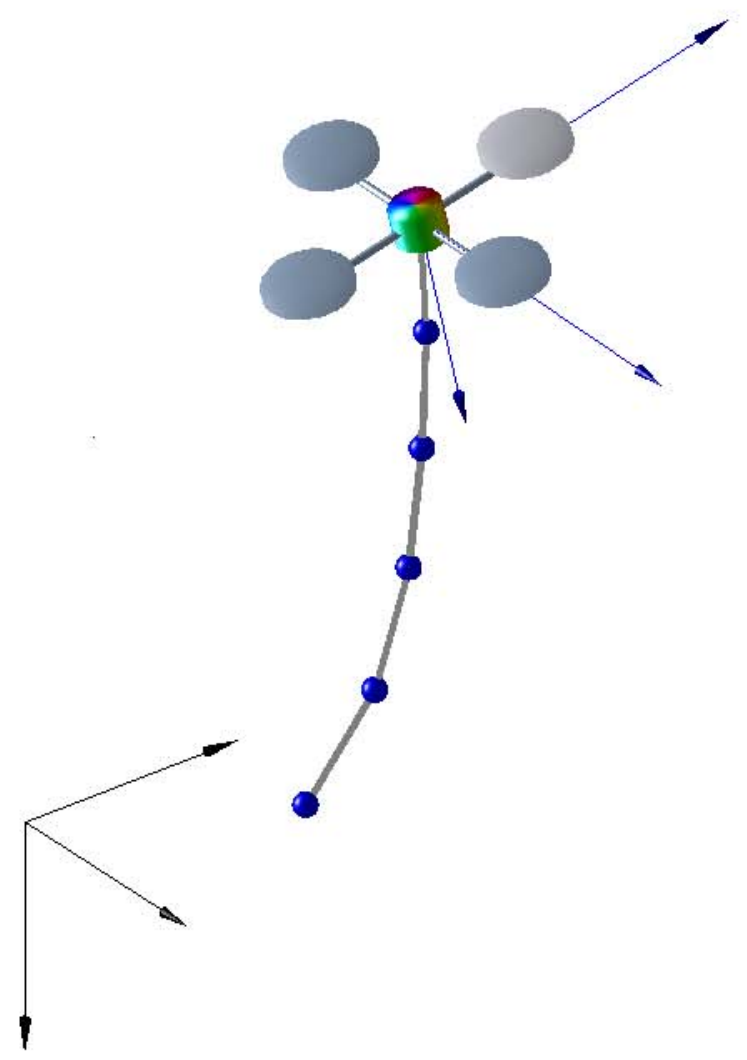}}
\put(2.7,6){\shortstack[c]{$m$}}
\put(2.3,4.8){\shortstack[c]{$m_{1}$}}
\put(2.2,4.05){\shortstack[c]{$m_{2}$}}
\put(2.1,3.25){\shortstack[c]{$m_{3}$}}
\put(2.7,2.4){\shortstack[c]{$m_{4}$}}
\put(2.2,1.5){\shortstack[c]{$m_{5}$}}
\put(1.3,0.8){\shortstack[c]{$e_{2}$}}
\put(1.7,2.1){\shortstack[c]{$e_{1}$}}
\put(0.1,-0.1){\shortstack[c]{$e_{3}$}}
\put(5,7){\shortstack[c]{$b_{1}$}}
\put(4.5,4.4){\shortstack[c]{$b_{2}$}}
\put(3.3,3.3){\shortstack[c]{$b_{3}$}}
\end{picture}
}
\caption{Quadrotor UAV with a cable-suspended load. Cable is modeled as a serial connection of arbitrary number of links (only 5 are illustrated).}\label{fig:Quad}
\end{figure}

The second part of this paper deals with nonlinear control system development. Quadrotor UAV is under-actuated as the direction of the total thrust is always fixed relative to its body. By utilizing geometric control systems for quadrotor~\cite{LeeLeoPICDC10,LeeLeoAJC13,GooLeePECC13}, we show that the hanging equilibrium of the links can be asymptotically stabilized while translating the quadrotor to a desired position. In contrast to existing papers where the force and the moment exerted by the payload to the quadrotor are considered as disturbances, the control systems proposed in this paper explicitly consider the coupling effects between the cable/load dynamics and the quadrotor dynamics.

Another distinct feature is that the equations of motion and the control systems are developed directly on the nonlinear configuration manifold in a coordinate-free fashion. This yields remarkably compact expressions for the dynamic model and controllers, compared with local coordinates that often require symbolic computational tools due to complexity of multibody systems. Furthermore, singularities of local parameterization are completely avoided to generate agile maneuvers in a uniform way.

Compared with preliminary results in~\cite{GooLeePACC14}, this paper presents a rigorous Lyapunov stability analysis to establish stability properties without any timescale separation assumptions or singular perturbation, and a new nonlinear integral control term is designed to guarantee robustness against unstructured uncertainties in both rotational and translational dynamics. In short, the main contribution of this paper is presenting a nonlinear dynamic model and a control system for a quadrotor UAV with a cable-suspended load, that explicitly incorporate the effects of deformable cable. 

%Since the quadrotor is not fully actuated it can only provide thrust in one direction and we are unable to produce a linear position controller. We utilized a combination of the geometric nonlinear controller for a quadrotor~\cite{c1FL}, linearized equations and a linear controller for stabilizing a chain pendulum to asymptotically stabilize the load. There are two main contributions in this paper. First, we presented a compact coordinate free model of a quadrotor carrying a load with flexible cable and that is applicable for any configuration and any arbitrary number of links. Second, we designed the geometric nonlinear controller to asymptotically stabilize the system. This controller avoids the singularities (which are common problem on Eular-angle-based controller) since it derived on a coordinate free dynamic model.

This paper is organized as follows. A dynamic model is presented at Section 2 and control systems are developed at Sections 3 and 4. The desirable properties of the proposed control system are illustrated by a numerical example at Section 5, followed by experimental results at Section 6.

%Due to the page limit, all of proofs are relegated to~\cite{FarhadDTLee}.

%In section II, equations of motion in 3D space for a chain pendulum on a quadrotor are derived. Section III, talks about the dynamics equation, linearization, and linear control design for a chain pendulum on a cart moving in 3D. Section IV presents the proposed geometric nonlinear controller and stability analysis to asymptotically stabilize the chain pendulum on a quadrotor. In section V, we provide numerical simulations, and experimental results to validate our work.

\section{Dynamic Model of a Quadrotor with a Flexible Cable}\label{sec:DM}

%\subsection{Background}

Consider a quadrotor UAV with a payload that is connected via a chain of $n$ links, as illustrated at Figure \ref{fig:Quad}. The inertial frame is defined by the unit vectors $e_{1}=[1;0;0]$, $e_{2}=[0;1;0]$, and $e_{3}=[0;0;1]\in \Re^{3}$, and the third axis $e_{3}$ corresponds to the direction of gravity. Define a body-fixed frame $\{\vec{b}_{1},\vec{b}_{2},\vec{b}_{3}\}$ whose origin is located at the center of mass of the quadrotor, and its third axis $\vec b_3$ is aligned to the axis of symmetry. 

The location of the mass center, and the attitude of the quadrotor are denoted by $x\in\Re^3$ and $R\in\SO$, respectively, where the special orthogonal group is $\SO=\{R\in\Re^{3\times 3}\,|\, R^T R=I_{3\times 3},\;\mathrm{det}[R]=1\}$. A rotation matrix represents the linear transformation of a representation of a vector from the body-fixed frame to the inertial frame. 

The dynamic model of the quadrotor is identical to~\cite{LeeLeoPICDC10}. The mass and the inertia matrix of the quadrotor are denoted by $m\in\Re$ and $J\in\Re^{3\times 3}$, respectively. The quadrotor can generates a thrust $-fRe_3\in\Re^3$ with respect to the inertial frame, where $f\in\Re$ is the total thrust magnitude. It also generates a moment $M\in\Re^3$ with respect to its body-fixed frame. The pair $(f,M)$ is considered as control input of the quadrotor. 

Let $q_i\in\Sph^2$ be the unit-vector representing the direction of the $i$-th link, measured outward from the quadrotor toward the payload, where the two-sphere is the manifold of unit-vectors in $\Re^3$, i.e., $\Sph^2=\{q\in\Re^3\,|\, \|q\|=1\}$. For simplicity, we assume that the mass of each link is concentrated at the outboard end of the link, and the point where the first link is attached to the quadrotor corresponds to the mass center of the quadrotor. The mass and the length of the $i$-th link are defined by $m_i$ and $l_i\in\Re$, respectively. Thus, the mass of the payload corresponds to $m_n$. The corresponding configuration manifold of this system is given by $\SO\times\Re^3\times (\Sph^2)^n$.

Next, we show the kinematics equations. Let $\Omega\in\Re^3$ be the angular velocity of the quadrotor represented with respect to the body fixed frame, and let $\omega_i\in\Re^3$ be the angular velocity of the $i$-th link represented with respect to the inertial frame. The angular velocity is normal to the direction of the link, i.e., $q_i\cdot\omega_i=0$. The kinematics equations are given by
\begin{align}
\dot R & = R\hat\Omega,\label{eqn:Rdot}\\
\dot{q}_{i} & =\omega_{i}\times q_{i}=\hat{\omega}_{i}q_{i},\label{eqn:qidot}
\end{align}
where the hat map $\hat\cdot:\Re^3\rightarrow\so$ is defined by the condition that $\hat x y =x\times y$ for any $x,y\in\Re^3$, and it transforms a vector in $\Re^{3}$ to a $3\times 3$ skew-symmetric matrix. More explicitly, it is given by
\begin{align}\label{LinEOM}
\hat{x}=
\begin{bmatrix}
0&-x_{3}&x_{2}\\
x_{3}&0&-x_{1}\\
-x_{2}&x_{1}&0
\end{bmatrix},
\end{align}
for $x=[x_{1},x_{2},x_{3}]^{T}\in \Re^{3}$. The inverse of the hat map is denoted by the \textit{vee} map $\vee:\so\rightarrow\Re^3$. 

Throughout this paper, the 2-norm of a matrix $A$ is denoted by $\|A\|$, and the dot product is denoted by $x \cdot y = x^Ty$. Also $\lambda_{\min}(\cdot)$ and $\lambda_{\max}(\cdot)$ denotes the minimum and maximum eigenvalue of a square matrix respectively, and $\lambda_{m}$ and $\lambda_{M}$ are shorthand for $\lambda_{m}=\lambda_{m}(J)$ and $\lambda_{M}=\lambda_{M}(J)$.   

\subsection{Lagrangian}

We derive the equations of motion according to Lagrangian mechanics. The kinetic energy of the quadrotor is given by
\begin{align}
T_Q = \frac{1}{2}m\|\dot x\|^2 + \frac{1}{2} \Omega\cdot J\Omega.\label{eqn:TQ}
\end{align}
Let $x_i\in\Re^3$ be the location of $m_i$ in the inertial frame. It can be written as
\begin{align}\label{posvec33}
x_{i}=x+\sum^{i}_{a=1}{l_{a}q_{a}}.
\end{align}
Then, the kinetic energy of the links are given by
\begin{align}
T_L & = \frac{1}{2} \sum_{i=1}^n m_i \|\dot x+\sum^{i}_{a=1}{l_{a}\dot q_{a}}\|^2\nonumber\\
& = \frac{1}{2}\sum_{i=1}^n m_i \|\dot x\| + \dot x\cdot \sum_{i=1}^n\sum_{a=i}^n m_a l_i \dot q_i
+\frac{1}{2}\sum_{i=1}^n m_i \|\sum_{a=1}^i l_a \dot q_a\|^2.
\label{eqn:TL}
\end{align}
From \refeqn{TQ} and \refeqn{TL}, the total kinetic energy can be written as
\begin{align}
T & =\frac{1}{2}M_{00}\|\dot{x}\|^{2}+\dot{x}\cdot\sum^{n}_{i=1}{M_{0i}\dot{q}_{i}}+\frac{1}{2}\sum^{n}_{i,j=1}{M_{ij}\dot{q}_{i}\cdot\dot{q}_{j}}\nonumber\\
&\quad +\frac{1}{2}\Omega^{T}J\Omega,\label{eqn:KE}
\end{align}
where the inertia values $M_{00},M_{0i},M_{ij}\in\Re$ are given by
\begin{gather}
M_{00}=m+\sum_{i=1}^n m_i,\quad M_{0i}=\sum_{a=i}^n m_a l_i,\quad M_{i0}=M_{0i},\nonumber\\
M_{ij}=\braces{\sum_{a=\max\{i,j\}}^n m_a} l_i l_j,\label{eqn:Mij}
\end{gather}
for $1\leq i,j\leq n$. The gravitational potential energy is given by
\begin{align}
V & = -mgx\cdot e_3 - \sum_{i=1}^n m_i g x_i\cdot e_3\nonumber\\
& = -\sum^{n}_{i=1}\sum^{n}_{a=i}m_{a}gl_{i}e_{3}\cdot q_{i}-M_{00}ge_{3}\cdot x,\label{eqn:PE}
\end{align}
From \refeqn{KE} and \refeqn{PE}, the Lagrangian is $L=T-V$.

\subsection{Euler-Lagrange equations}
Coordinate-free form of Lagrangian mechanics on the two-sphere $\Sph^2$ and the special orthogonal group $\SO$ for various multibody systems has been studied in~\cite{Lee08,LeeLeoIJNME08}. The key idea is representing the infinitesimal variation of $R\in\SO$ in terms of the exponential map
\begin{align}
\delta R = \frac{d}{d\epsilon}\bigg|_{\epsilon = 0} \exp R(\epsilon \hat\eta) = R\hat\eta,\label{eqn:delR}
\end{align}
for $\eta\in\Re^3$. The corresponding variation of the angular velocity is given by $\delta\Omega=\dot\eta+\Omega\times\eta$. Similarly, the infinitesimal variation of $q_i\in\Sph^2$ is given by
\begin{align}
\delta q_i = \xi_i\times q_i,\label{eqn:delqi}
\end{align}
for $\xi_i\in\Re^3$ satisfying $\xi_i\cdot q_i=0$. This lies in the tangent space as it is perpendicular to $q_{i}$. Using these, we obtain the following Euler-Lagrange equations.
\begin{prop}\label{prop:FDM}
Consider a quadrotor with a cable suspended payload whose Lagrangian is given by \refeqn{KE} and \refeqn{PE}. The Euler-Lagrange equations on $\Re^3\times\SO\times(\Sph^2)^n$ are as follows
\begin{gather}
M_{00}\ddot{x}+\sum^{n}_{i=1}{M_{0i}\ddot{q}_{i}}=-fRe_{3}+M_{00}ge_{3}+\Delta_{x},\label{eqn:xddot}\\
M_{ii}\ddot q_i  -\hat q_i^2 (M_{i0}\ddot x + \sum_{\substack{j=1\\j\neq i}}^n M_{ij}\ddot q_j)\nonumber\\
=- M_{ii}\|\dot q_i\|^2 q_i-\sum_{a=i}^n m_a gl_i\hat q_i^2 e_3,\label{eqn:qddot}\\
J\dot{\Omega}+\hat{\Omega}J\Omega=M+\Delta_{R},\label{eqn:Wdot}
\end{gather}
where $M_{ij}$ is defined at \refeqn{Mij}. Therefore $\Delta_{x}$ and $\Delta_{R}\in\Re^3$ are fixed disturbances applied to the translational and rotational dynamics of the quadrotor respectively. Equations \refeqn{xddot} and \refeqn{qddot} can be rewritten in a matrix form as follows:
\begin{align}
&\begin{bmatrix}%
    M_{00} & M_{01} & M_{02} & \cdots & M_{0n} \\
    -\hat q_1^2 M_{10} & M_{11}I_{3} & -M_{12} \hat q_1^2 & \cdots & -M_{1n}\hat q_1^2\\%
    -\hat q_2^2 M_{20} & -M_{21} \hat q_2^2 & M_{22} I_{3} & \cdots & -M_{2n} \hat q_2^2\\%
    \vdots & \vdots & \vdots & & \vdots\\
    -\hat q_n^2 M_{n0} & -M_{n1} \hat q_n^2 & -M_{n2}\hat q_n^2 & \cdots & M_{nn} I_{3}
    \end{bmatrix}%
    \begin{bmatrix}
    \ddot x \\ \ddot q_1 \\ \ddot q_2 \\ \vdots \\ \ddot q_n
    \end{bmatrix}\nonumber\\
 &=   \begin{bmatrix}
    -fRe_3 +M_{00}ge_3+\Delta_{x}\\
    -\|\dot q_1\|^2M_{11} q_1 -\sum_{a=1}^n m_a gl_1\hat q_1^2 e_3\\
    -\|\dot q_2\|^2M_{22} q_2 -\sum_{a=2}^n m_a gl_2\hat q_2^2 e_3\\
    \vdots\\
    -\|\dot q_n\|^2M_{nn} q_n - m_n gl_n\hat q_n^2 e_3
    \end{bmatrix}.\label{eqn:ELm}
\end{align}
Or equivalently, it can be written in terms of the angular velocities as
\begin{gather}
\begin{bmatrix}%
    M_{00} & -M_{01}\hat q_1 & -M_{02}\hat q_2 & \cdots & -M_{0n}\hat q_n\\
    \hat q_1 M_{10} & M_{11}I_{3} & -M_{12} \hat q_1 \hat q_2 & \cdots & -M_{1n}\hat q_1 \hat q_n\\%
    \hat q_2 M_{20} &-M_{21} \hat q_2\hat q_1 & M_{22} I_{3} & \cdots & -M_{2n} \hat q_2 \hat q_n\\%
    \vdots & \vdots & \vdots & & \vdots\\
    \hat q_n M_{n0} &-M_{n1} \hat q_n \hat q_1 & -M_{n2}\hat q_n \hat q_2 & \cdots & M_{nn} I_{3}
    \end{bmatrix}%
    \begin{bmatrix}
    \ddot x \\ \dot \omega_1 \\ \dot \omega_2 \\ \vdots \\ \dot \omega_n
    \end{bmatrix}\nonumber\\
    =
    \begin{bmatrix}
    \sum_{j=1}^n M_{0j} \|\omega_j\|^2 q_j-fRe_3+M_{00}ge_3+\Delta_{x}\\
    \sum_{j=2}^n M_{1j}\|\omega_j\|^2\hat q_1 q_j +\sum_{a=1}^n m_a gl_1\hat q_1 e_3\\
    \sum_{j=1,j\neq 2}^n M_{2j}\|\omega_j\|^2\hat q_2 q_j +\sum_{a=2}^n m_a gl_2\hat q_2 e_3\\
    \vdots\\
    \sum_{j=1}^{n-1} M_{nj}\|\omega_j\|^2\hat q_n q_j + m_n gl_n\hat q_n e_3\\
    \end{bmatrix},\label{eqn:ELwm}\\
\dot q_i = \omega_i\times q_i.\label{eqn:ELwm2}
\end{gather}
\end{prop}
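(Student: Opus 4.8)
The plan is to derive the equations of motion from Hamilton's variational principle, $\delta \int_{t_0}^{t_1} L\, dt = 0$, using the coordinate-free infinitesimal variations \refeqn{delR} and \refeqn{delqi} already introduced, together with $\delta\dot x$ arbitrary in $\Re^3$. The external thrust, moment, and disturbances enter through the virtual work term $\int_{t_0}^{t_1}\left(-fRe_3 + \Delta_x\right)\cdot\delta x + (M + \Delta_R)\cdot\eta \, dt$, so strictly this is the Lagrange--d'Alembert principle. I would treat the three groups of variations ($\delta x$, $\xi_i$, $\eta$) separately, since the Lagrangian splits cleanly: the $\Omega$-dependent rotational kinetic energy is decoupled from the translational/link part in \refeqn{KE}.

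First I would handle the attitude equation \refeqn{Wdot}. This is exactly the derivation in~\cite{LeeLeoPICDC10}: using $\delta\Omega = \dot\eta + \Omega\times\eta$, compute $\delta\int \frac12\Omega\cdot J\Omega\, dt = \int (J\Omega)\cdot(\dot\eta + \Omega\times\eta)\,dt$, integrate by parts, collect the coefficient of $\eta$, and equate with the moment plus disturbance to get $J\dot\Omega + \hat\Omega J\Omega = M + \Delta_R$. I would simply cite the reference rather than reproduce it. Next, for $\delta x$: since $x$ appears in $T$ only through $\dot x$ and in $V$ linearly, $\delta\int L\, dt$ in the $\delta x$ direction gives, after one integration by parts, $-\frac{d}{dt}\left(M_{00}\dot x + \sum_i M_{0i}\dot q_i\right) + M_{00} g e_3$, which set equal to $-fRe_3 + \Delta_x$ yields \refeqn{xddot}, using $\ddot q_i$ in place of $\frac{d}{dt}\dot q_i$.

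The substantive step is the link equations \refeqn{qddot}. Here I would substitute $\delta q_i = \xi_i\times q_i$ and the induced $\delta\dot q_i = \dot\xi_i\times q_i + \xi_i\times\dot q_i$ into the variation of $T$ and $V$, integrate by parts in $\dot\xi_i$, and then use the constraint that $\xi_i$ ranges only over vectors with $\xi_i\cdot q_i = 0$. The cleanest route is to observe that for any vector $u\in\Re^3$, the identity $u\cdot(\xi_i\times q_i) = \xi_i\cdot(q_i\times u) = -\xi_i\cdot\hat q_i u$ holds, so after collecting everything into the form $\int \xi_i\cdot(\cdots)\,dt = 0$ for all admissible $\xi_i$, the bracketed expression must be parallel to $q_i$; equivalently, premultiplying by $-\hat q_i^2 = I - q_iq_i^T$ (the orthogonal projection onto $q_i^\perp$, valid since $\|q_i\|=1$) extracts the tangential component. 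Carrying this out produces the $-\hat q_i^2$ factors multiplying $M_{i0}\ddot x$ and the $M_{ij}\ddot q_j$ terms, the $M_{ii}\ddot q_i$ term (which survives projection because $\hat q_i^2\ddot q_i$ recombines with the $\|\dot q_i\|^2 q_i$ centripetal term via $\frac{d}{dt}(q_i\cdot\dot q_i)=0$), and the gravitational term $-\sum_{a\geq i} m_a g l_i \hat q_i^2 e_3$. The main obstacle, and the place where care is needed, is precisely this bookkeeping: correctly tracking the double sums in $T_L$ so that the coefficient of $\dot q_i$ in the mixed term is $M_{0i}=\sum_{a=i}^n m_a l_i$ and the coefficient structure of $M_{ij}$ comes out as in \refeqn{Mij}, and correctly using $q_i\cdot\dot q_i = 0$ and its derivative $q_i\cdot\ddot q_i = -\|\dot q_i\|^2$ to convert the $\frac{d}{dt}$ of the kinetic terms into the stated second-order form.

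Finally, the matrix forms \refeqn{ELm} and \refeqn{ELwm} are just repackaging: \refeqn{ELm} stacks \refeqn{xddot} and \refeqn{qddot} with the block $(i,j)$ entries read off directly. For \refeqn{ELwm} I would substitute $\dot q_i = \hat\omega_i q_i$ and $\ddot q_i = \hat{\dot\omega}_i q_i + \hat\omega_i\dot q_i = \hat{\dot\omega}_i q_i + \hat\omega_i\hat\omega_i q_i$; using $\hat\omega_i q_i = -\hat q_i\omega_i$ gives the $-\hat q_i$ and $-\hat q_i\hat q_j$ blocks on the left, while $\hat\omega_i\hat\omega_i q_i = \omega_i(\omega_i\cdot q_i) - \|\omega_i\|^2 q_i = -\|\omega_i\|^2 q_i$ (using $\omega_i\cdot q_i = 0$) moves to the right-hand side as the $\|\omega_j\|^2 q_j$ terms, and the identity $-\hat q_i^2 e_3 = -\hat q_i(\hat q_i e_3)$ with a further application of $\hat q_i$ accounts for the sign change in the gravitational terms. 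The kinematics \refeqn{ELwm2} is \refeqn{qidot} restated. I do not expect any difficulty in this last repackaging beyond sign vigilance.
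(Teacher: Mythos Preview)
Your proposal is correct and follows essentially the same route as the paper's proof: Lagrange--d'Alembert with the variations $\delta x$, $\delta R=R\hat\eta$, $\delta q_i=\xi_i\times q_i$, integration by parts in $\delta\mathfrak{G}$, then extracting the tangential component of the $\xi_i$-coefficient (the paper applies $\hat q_i$ once rather than $-\hat q_i^2$, which is equivalent on $T_{q_i}\Sph^2$), and finally the same identities $q_i\cdot\ddot q_i=-\|\dot q_i\|^2$ and $\ddot q_i=-\hat q_i\dot\omega_i-\|\omega_i\|^2 q_i$ for the repackaging into \refeqn{ELm} and \refeqn{ELwm}.
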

\begin{proof}
See Appendix \ref{sec:PfFDM}
\end{proof}
These provide a coordinate-free form of the equations of motion for the presented quadrotor UAV that is uniformly defined for any number of links $n$, and that is globally defined on the nonlinear configuration manifold. Compared with equations of motion derived in terms of local coordinates, such as Euler-angles, these avoid singularities completely, and they provide a compact form of equations that are suitable for control system design.

\section{Control System Design for a Simplified Dynamic Model}\label{sec:SDM}

\subsection{Control Problem Formulation}

Let $x_d\in\Re^3$ be a fixed desired location of the quadrotor UAV. Assuming that all of the links are pointing downward, i.e., $q_i=e_3$, the resulting location of the payload is given by 
\begin{align}
x_n=x_d +\sum_{i=1}^n l_i e_3. 
\end{align}
We wish to design the control force $f$ and the control moment $M$ such that this hanging equilibrium configuration at the desired location becomes asymptotically stable. 

\subsection{Simplified Dynamic Model}

For the given equations of motion \refeqn{xddot} for $x$, the control force is given by $-fRe_3$. This implies that the total thrust magnitude $f$ can be arbitrarily chosen, but the direction of the thrust vector is always along the third body-fixed axis. Also, the rotational attitude dynamics of the quadrotor is not affected by the translational dynamics of the quadrotor or the dynamics of links.

To overcome the under-actuated property of a quadrotor, in this section, we first replace the term  $-fRe_3$ of \refeqn{xddot} by a fictitious control input $u\in\Re^3$, and design an expression for $u$ to asymptotically stabilize the desired equilibrium. This is equivalent to assuming that the attitude $R$ of the quadrotor can be instantaneously controlled. The effects of the attitude dynamics are incorporated at the next section. Also $\Delta_{x}$ is ignored in the simplified dynamic model.
In short, the equations of motion for the simplified dynamic model considered in the section are given by
\begin{align}
M_{00}\ddot{x}+\sum^{n}_{i=1}{M_{0i}\ddot{q}_{i}}=u+M_{00}ge_{3},\label{eqn:xddot_sim}
\end{align}
and \refeqn{qddot}.

\subsection{Linear Control System}\label{sec:LCS}
The fictitious control input is designed from the linearized dynamics about the desired hanging equilibrium. The variation of $x$ and $u$ are given by
\begin{align}
\delta x = x - x_d,\quad \delta u = u - M_{00}g e_3.\label{eqn:delxLin}
\end{align}
From \refeqn{delqi}, the variation of $q_i$ from the equilibrium can be written as
\begin{align}
\delta q_i = \xi_i\times e_3,\label{eqn:delqLin}
\end{align}
where $\xi_i\in\Re^3$ with $\xi_i\cdot e_3=0$. The variation of $\omega_i$ is given by $\delta\omega\in\Re^3$ with $\delta\omega_i \cdot e_3=0$. Therefore, the third element of each of $\xi_i$ and $\delta\omega_i$ for any equilibrium configuration is zero, and they are omitted in the following linearized equation, i.e., the state vector of the linearized equation is composed of $C^T\xi_i\in\Re^2$, where $C=[e_1,e_2]\in\Re^{3\times 2}$. 
%Substituting these into \refeqn{ELwm}, and ignoring higher-order terms, we obtain the following linearized equation of motion.

\newcommand{\Mb}{\mathbf{M}}
\newcommand{\Kb}{\mathbf{K}}
\newcommand{\Bb}{\mathbf{B}}
\newcommand{\xb}{\mathbf{x}}
\newcommand{\ub}{\mathbf{u}}
\newcommand{\vb}{\mathbf{v}}
\newcommand{\Gb}{\mathbf{G}}

\begin{prop}\label{prop:FDMM}
The linearized equations of the simplified dynamic model \refeqn{xddot_sim} and \refeqn{qddot} can be written as follows
\begin{gather}
\Mb\ddot \xb  + \Gb\xb = \Bb \delta u+ \g(\xb,\dot{\xb}),\label{eqn:Lin}
\end{gather}
where $\g(\xb,\dot{\xb})$ corresponds to the higher order terms where $\xb=[\delta x,\; \xb_{q}]^{T}\in\Re^{2n+3}$, $\Mb\in\Re^{2n+3\times 2n+3}$, $\Gb\in\Re^{2n+3\times 2n+3}$, $\Bb\in\Re^{2n+3\times 3}$, and \refeqn{Lin} can equivalently be written as
\begin{align*}
\begin{bmatrix} \Mb_{xx} & \Mb_{xq}\\ \Mb_{qx} & \Mb_{qq} \end{bmatrix}
\begin{bmatrix}  \delta\ddot x \\ \ddot \xb_q\end{bmatrix}
&+
\begin{bmatrix} 0_3 & 0_{3\times 2n}\\ 0_{2n\times 3} & \Gb_{qq}\end{bmatrix}
\begin{bmatrix}  \delta x \\ \xb_q\end{bmatrix}\\
&=
\begin{bmatrix} I_3 \\ 0_{2n\times 3}\end{bmatrix}
\delta u+ \g(\xb,\dot{\xb}),
\end{align*}
where the corresponding sub-matrices are defined as
\begin{align*}
\xb_q & = [C^T \xi_1;\,\ldots\,;\,C^T \xi_n],\\
\Mb_{xx} &= M_{00}I_{3},\\
\Mb_{xq} &= \begin{bmatrix}
-M_{01}\hat e_3C & -M_{02}\hat e_3C & \cdots & -M_{0n}\hat e_3C
\end{bmatrix},\\
\Mb_{qx} & = \Mb_{xq}^T,\\
\Mb_{qq} &=
    \begin{bmatrix}%
M_{11}I_{2} & M_{12} I_2 & \cdots & M_{1n}I_2\\%
M_{21} I_2 & M_{22} I_{2} & \cdots & M_{2n}I_2\\%
\vdots & \vdots & & \vdots\\
M_{n1}I_2 & M_{n2}I_2 & \cdots & M_{nn} I_{2}
    \end{bmatrix},\\
\Gb_{qq}  = \mathrm{diag}[&\sum_{a=1}^n {m_a gl_1 I_2},\cdots,m_ngl_nI_2].
\end{align*}
\end{prop}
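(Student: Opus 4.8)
The plan is to linearize the simplified model about the hanging equilibrium and then read off the blocks of $\Mb$, $\Gb$, $\Bb$ by matching terms. Concretely, I would take the simplified model in angular-velocity form --- the first row of \refeqn{ELwm} with $-fRe_3$ replaced by the fictitious input $u$ and $\Delta_x$ dropped, together with the $n$ link rows of \refeqn{ELwm} --- and linearize about $x=x_d$, $q_i=e_3$, $\dot x=0$, $\omega_i=0$, $u=M_{00}ge_3$. The admissible variations are $\delta x=x-x_d$, $\delta u=u-M_{00}ge_3$, and $\delta q_i=\xi_i\times e_3$ with $\xi_i\cdot e_3=0$ as in \refeqn{delqLin}, together with a velocity variation $\delta\omega_i$ satisfying $\delta\omega_i\cdot e_3=0$. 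Differentiating the kinematics \refeqn{qidot} and comparing with $\frac{d}{dt}\delta q_i=\dot\xi_i\times e_3$ gives $\delta\omega_i\times e_3=\dot\xi_i\times e_3$; since $\delta\omega_i$ and $\dot\xi_i$ are both perpendicular to $e_3$, this forces the linearized kinematic relation $\delta\omega_i=\dot\xi_i$. Because $\xi_i\perp e_3$ we may write $\xi_i=C\xb_{q,i}$ with $\xb_{q,i}=C^T\xi_i$, and hence replace $\xi_i$ by $C\xb_{q,i}$ and $\delta\dot\omega_i$ by $C\ddot\xb_{q,i}$ throughout.

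For the translational row, every coefficient is constant and every non-constant quantity vanishes at equilibrium, so linearization gives $M_{00}\delta\ddot x-\sum_{j}M_{0j}\hat e_3\,\delta\dot\omega_j=\delta u$ plus higher-order terms; the velocity-quadratic term $\sum_j M_{0j}\|\omega_j\|^2 q_j$ on the right-hand side of \refeqn{ELwm} vanishes to second order and is absorbed into $\g(\xb,\dot\xb)$. Substituting $\delta\dot\omega_j=C\ddot\xb_{q,j}$ reproduces the first block row, whence $\Mb_{xx}=M_{00}I_3$, $\Mb_{xq}=[-M_{01}\hat e_3 C,\ \ldots,\ -M_{0n}\hat e_3 C]$, and $\Bb=[I_3;\ 0_{2n\times3}]$.

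For the $i$-th link row of \refeqn{ELwm} I would linearize term by term. The term $\hat q_i M_{i0}\ddot x$ contributes only $M_{i0}\hat e_3\,\delta\ddot x$, because $\ddot x=0$ at equilibrium annihilates the variation of $\hat q_i$; the term $M_{ii}\dot\omega_i$ contributes $M_{ii}C\ddot\xb_{q,i}$; and the coupling term $-M_{ij}\hat q_i\hat q_j\dot\omega_j$ contributes $-M_{ij}\hat e_3^2\,\delta\dot\omega_j=M_{ij}C\ddot\xb_{q,j}$, using $\hat e_3^2 v=-v$ for any $v\perp e_3$. On the right-hand side, $\sum_{j\ne i}M_{ij}\|\omega_j\|^2\hat q_i q_j$ is velocity-quadratic and goes into $\g(\xb,\dot\xb)$, while the gravitational term linearizes through $\delta(\hat q_i e_3)=\delta q_i\times e_3=(\xi_i\times e_3)\times e_3=e_3(\xi_i\cdot e_3)-\xi_i(e_3\cdot e_3)=-\xi_i=-C\xb_{q,i}$, so it contributes $-(\sum_{a=i}^n m_a gl_i)C\xb_{q,i}$. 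Collecting these and moving the gravity term to the left gives $M_{i0}\hat e_3\,\delta\ddot x+M_{ii}C\ddot\xb_{q,i}+\sum_{j\ne i}M_{ij}C\ddot\xb_{q,j}+(\sum_{a=i}^n m_a gl_i)C\xb_{q,i}=\g(\xb,\dot\xb)$; left-multiplying by $C^T$ and using $C^TC=I_2$ yields the $i$-th lower block row, with $(q,x)$ block $M_{i0}C^T\hat e_3$, $(q,q)$ blocks $M_{ij}I_2$, diagonal gravity block $(\sum_{a=i}^n m_a gl_i)I_2$, and no $\delta u$ term. Stacking the $n+1$ block rows produces \refeqn{Lin}, and $\Mb_{qx}=\Mb_{xq}^T$ follows at once from $M_{i0}=M_{0i}$ and $\hat e_3^T=-\hat e_3$.

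The term-by-term differentiation is routine; the one delicate point --- the main obstacle --- is the passage from the ambient $\Re^3$ link equations to the reduced $2n$-dimensional description on the tangent spaces $T_{e_3}\Sph^2$. One must (i) establish the linearized kinematic relation $\delta\omega_i=\dot\xi_i$, (ii) observe that each linearized link equation already lies in $e_3^\perp=\mathrm{range}(C)$, so that left-multiplication by $C^T$ discards no information, and (iii) apply correctly the two identities $\hat e_3^2 v=-v$ and $(\xi_i\times e_3)\times e_3=-\xi_i$ for $v,\xi_i\perp e_3$, which are exactly what collapse the would-be $90^\circ$-rotation factors $C^T\hat e_3 C$ and leave the clean $I_2$ blocks. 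Careful bookkeeping of which terms of \refeqn{ELwm} are $O(\|(\xb,\dot\xb)\|^2)$, and hence belong to $\g(\xb,\dot\xb)$, rounds out the argument.
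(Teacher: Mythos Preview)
Your proposal is correct and follows essentially the same approach as the paper: linearize the angular-velocity form \refeqn{ELwm} about the hanging equilibrium, establish the kinematic relation $\dot\xi_i=\delta\omega_i$ from $\delta\dot q_i=\dot\xi_i\times e_3=\delta\omega_i\times e_3$ together with $\xi_i,\delta\omega_i\perp e_3$, and then read off the blocks after projecting with $C^T$. The paper's own proof is terser on the term-by-term linearization you spell out, but it additionally supplies (in its Appendix~4) an exact expansion based on $\xi=\dfrac{e_3\times q}{\|e_3\times q\|}\sin^{-1}(\|e_3\times q\|)$ to verify rigorously that $\dot\xi=\delta\omega+\g(\xi,\delta\omega)$ and $\ddot\xi=\delta\dot\omega+\g(\xi,\delta\omega)$, which substantiates the ``careful bookkeeping'' you allude to.
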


\begin{proof}
See Appendix \ref{sec:PfFDMM}
\end{proof}
For the linearized dynamics \refeqn{Lin}, the following control system is chosen
\begin{align}
\delta u & = -k_{x}\delta{x}-k_{\dot{x}}\delta\dot{x}-\sum_{a=1}^{n}{k_{q_{i}}C^{T}(e_3\times q_{i})}-k_{\omega_{i}}C^{T}\delta\omega_{i}\nonumber\\
& = -K_x \xb - K_{\dot x} \dot \xb,\label{eqn:delu}
\end{align}
for controller gains $K_x =[k_xI_3,k_{q_1}I_{3\times 2},\ldots k_{q_n}I_{3\times 2}]\in\Re^{3\times (3+2n)}$ and $K_{\dot x} =[k_{\dot x}I_3,k_{\omega_1}I_{3\times 2},\ldots k_{\omega_n}I_{3\times 2}]\in\Re^{3\times (3+2n)}$. Provided that \refeqn{Lin} is controllable, we can choose the controller gains $K_x,K_{\dot x}$ such that the equilibrium is asymptotically stable for the linearized equation \refeqn{Lin}. Then, the equilibrium becomes asymptotically stable for the nonlinear Euler Lagrange equation \refeqn{xddot_sim} and \refeqn{qddot}~\cite{Kha96}. The controlled linearized system can be written as
\begin{align}
\dot{z}_{1}=&\mathds{A} z_{1}+\mathds{B}\g(\xb,\dot{\xb}),
\end{align}
where $z_{1}=[\xb,\; \dot{\xb}]^{T}\in\Re^{4n+6}$ and the matrices $\mathds{A}\in\Re^{4n+6\times 4n+6}$ and $\mathds{B}\in\Re^{4n+6\times 2n+3}$ are defined as
\begin{align}
\mathds{A}=\begin{bmatrix}
0&I\\
-\Mb^{-1}(\Gb+\Bb K_{x})&-{(\Mb^{-1}\Bb K_{\dot{x}})}
\end{bmatrix}, \mathds{B}=\begin{bmatrix}
0\\
\Mb^{-1}
\end{bmatrix}.
\end{align}
We can also choose $K_{\xb}$ and $K_{\dot{\xb}}$ such that $\mathds{A}$ is Hurwitz. Then for any positive definite matrix $Q\in\Re^{4n+6\times4n+6}$, there exist a positive definite and symmetric matrix $P\in\Re^{4n+6\times4n+6}$ such that $\mathds{A}^{T}P+P\mathds{A}=-Q$ according to~\cite[Thm 3.6]{Kha96}.

\section{Controller Design for a Quadrotor with a Flexible Cable}\label{sec:CS}

The control system designed in the previous section is generalized to the full dynamic model that includes the attitude dynamics. The central idea is that the attitude $R$ of the quadrotor is controlled such that its total thrust direction $-Re_3$ that corresponds to the third body-fixed axis asymptotically follows the direction of the fictitious control input $u$. By choosing the total thrust magnitude properly, we can guarantee that the total thrust vector $-fRe_{3}$ asymptotically converges to the fictitious ideal force $u$, thereby yielding asymptotic stability of the full dynamic model.
\subsection{Controller Design}
Consider the full nonlinear equations of motion, let $A\in\Re^3$ be the ideal total thrust of the quadrotor system that asymptotically stabilize the desired equilibrium. From \refeqn{delxLin}, we have 
\begin{align}
A= M_{00}ge_3 + \delta u = -K_{x} \xb-K_{\dot{x}}\dot\xb -K_{z}\satr_{\sigma}(e_{\xb})+ M_{00}ge_3,\label{eqn:A}
\end{align}
where the following integral term $e_{\xb}\in\Re^{2n+3}$ is added to eliminate the effect of disturbance $\Delta_x$ in the full dynamic model 
\begin{align}\label{eqn:exterm}
e_{\xb}=\int^{t}_{0}{(P\mathds{B})^{T}z_{1}(\tau)\;d\tau},
\end{align}
where $K_z =[k_{z}I_3,k_{z_1}I_{3\times 2},\ldots k_{z_n}I_{3\times 2}]\in\Re^{3\times (3+2n)}$ is an integral gain. For a positive constant $\sigma\in\Re$, a saturation function $\sat_\sigma:\Re\rightarrow [-\sigma,\sigma]$ is introduced as
\begin{align*}
\sat_{\sigma}(y) = \begin{cases}
\sigma & \mbox{if } y >\sigma\\
y & \mbox{if } -\sigma \leq y \leq\sigma\\
-\sigma & \mbox{if } y <-\sigma\\
\end{cases}.
\end{align*}
If the input is a vector $y\in\Re^n$, then the above saturation function is applied element by element to define a saturation function $\sat_\sigma(y):\Re^n\rightarrow [-\sigma,\sigma]^n$ for a vector. It is also assumed that an upper bound of the infinite norm of the uncertainty is known
\begin{align}\label{eqn:disturbancecond}
\|\Delta_{x}\|_{\infty}\leq \delta,
\end{align}
for positive constant $\delta$. The desired direction of the third body-fixed axis $b_{3_c}\in\Sph^2$ is given by
\begin{align}
b_{3_c} = - \frac{A}{\|A\|}.\label{eqn:b3c}
\end{align}
This provides a two-dimensional constraint for the desired attitude of quadrotor, and there is additional one-dimensional degree of freedom that corresponds to rotation about the third body-fixed axis, i.e., yaw angle. A desired direction of the first body-fixed axis, namely $b_{1_d}\in\Sph^2$ is introduced to resolve it, and it is projected onto the plane normal to $b_{3_c}$. The desired direction of the second body-fixed axis is chosen to constitute an orthonormal frame. More explicitly, the desired attitude is given by
\begin{align}
R_c = \bracket{-\frac{\hat b_{3_c}^2 b_{1_d}}{\|\hat b_{3_c}^2 b_{1_d}\|},\;
 \frac{\hat b_{3_c}b_{1_d}}{\|\hat b_{3_c}b_{1_d}\|},\; b_{3_c}},
\end{align}
which is guaranteed to lie in $\SO$ by construction, assuming that $b_{1_d}$ is not parallel to $b_{3_c}$~\cite{LeeLeoAJC13}. The desired angular velocity $\Omega_{c}\in\Re^{3}$ is obtained by the attitude kinematics equation
\begin{align}
\Omega_c = (R_c^T \dot R_c)^\vee.
\end{align}
Next, we introduce the tracking error variables for the attitude and the angular velocity $e_{R}$, $e_{\Omega}\in\Re^{3}$ as follows~\cite{TFJCHTLeeHG}
\begin{align}
&e_{R}=\frac{1}{2}(R_{c}^{T}R-R^{T}R_{c})^{\vee},\\
&e_{\Omega}=\Omega-R^{T}R_{c}\Omega_{c}.
\end{align}
The thrust magnitude and the moment vector of quadrotor are chosen as 
\begin{align}
f = -A\cdot Re_3,\label{eqn:fi}
\end{align}\vspace{-0.87cm}
\begin{align}
M =&-{k_R}e_{R} -{k_{\Omega}}e_{\Omega} -k_{I}e_{I}+\Omega\times J\Omega\nonumber\\
&-J(\hat\Omega R^T R_{c} \Omega_{c} - R^T R_{c}\dot\Omega_{c}),\label{eqn:Mi}
\end{align}
where $k_R,k_\Omega$, and $k_{I}$ are positive constants and the following integral term is introduced to eliminate the effect of fixed disturbance $\Delta_{R}$
\begin{align}\label{eqn:integralterm}
e_{I}=\int^{t}_{0}{e_{\Omega}(\tau)+c_{2}e_{R}(\tau)d\tau},
\end{align}
where $c_{2}$ is a positive constant.
%Attitude error function $\Psi$ is positive definite about $R=R_{d}$ and a lower bound of $\Psi$ is given as follows:
%\begin{align}
%\frac{1}{2}\|e_{R}\|^{2}\leq\frac{1}{2-\psi}\|e_{R}\|^{2},
%\end{align}
%also the time derivative of $\Psi$ and $e_{R}$ satisfies:
%\begin{align}
%\dot{\Psi}=e_{R}\cdot e_{\Omega},\quad \|\dot{e}_{R}\|\leq\|e_{\Omega}\|.
%\end{align}
\subsection{Stability Analysis}
\begin{prop}\label{prop:SA1}
Consider control inputs $f$, $M$ defined in \refeqn{fi} and \refeqn{Mi}. There exist controller parameters and gains such that, (i) the zero equilibrium of tracking error is stable in the sense of Lyapunov; (ii) the tracking errors $e_{R}$, $e_{\Omega}$, $\xb$, $\dot{\xb}$ asymptotically converge to zero as $t\rightarrow\infty$; (iii) the integral terms $e_{I}$ and $e_{\xb}$ are uniformly bounded.

%For positive constants $k_{R}$, $k_{\Omega}$, the constants $B_{2}$ and $c_{2}$ are chosen such that
%\begin{align}\label{eqn:c2}
%c_{2}<\min\{\frac{\sqrt{k_{R}\lambda_{m}}}{\lambda_{M}},\frac{4k_{\Omega}}{8k_{R}\lambda_{M}+(k_{\Omega}+B_{2})^{2}} \},
%\end{align}
%and
%\begin{align}
%\|(2J-\trs[J]I)\|\|\Omega_{d}\|\leq B_{2}.
%\end{align}
%Also suppose that the initial condition satisfy
%\begin{align}
%\Psi(R(0),R_{c}(0))<\psi_{1}<1,
%\end{align}
%for positive constant $\psi_{1}$. We choose positive constants $k_{\max}$, $C_{X}$, $\gamma$, $B_{1}$ such that
%\begin{align}
%k_{\max}C_{X}<\frac{\lambda_{\min}(Q)}{2(4n+6)\alpha\|P\|_{2}},
%\end{align}
%where $\alpha=\sqrt{\psi_{1}(2-\psi_{1})}$, and
%\begin{align}
%\gamma<(\lambda_{\min}(W))/2\|P\|_{2},
%\end{align}
%where 
%\begin{align}
%W=\begin{bmatrix}
%\lambda_{\min}(Q)-2C_{X}k_{\max}(4n+6)\alpha\|P\|&-C_{X}B_{1}\|P\|\\
%-C_{X}B_{1}\|P\|&\lambda_{m}(W_{2})
%\end{bmatrix},
%\end{align}
%and
%\begin{align}
%W_{2}=\begin{bmatrix}
%c_{2}k_{R}&-\frac{c_{2}}{2}(k_{\Omega}+B_{2})\\
%-\frac{c_{2}}{2}(k_{\Omega}+B_{2})&k_{\Omega}-2c_{2}\lambda_{M}
%\end{bmatrix}.
%\end{align}
%Then, the equilibrium of the zero tracking errors is globally exponentially stable and the integral terms $e_{I}$ is uniformly bounded.
\end{prop}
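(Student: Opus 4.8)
The plan is to construct a single composite Lyapunov function for the full closed-loop system and show it is non-increasing along trajectories, then invoke an invariance/Barbalat argument to upgrade stability to asymptotic convergence. First I would decompose the error dynamics into two coupled subsystems: the \emph{translational/link} subsystem, governed by the linearized equation \refeqn{Lin} driven by $\delta u$, and the \emph{attitude} subsystem, governed by \refeqn{Wdot} with the control \refeqn{Mi}. The key structural observation is that the actual thrust is $-fRe_3 = -(A\cdot Re_3)Re_3$, which differs from the ideal force $A$ by an amount controlled by the attitude error; writing $-fRe_3 = A + (A\cdot Re_3)(Re_3 - b_{3_c}\|A\|/\|A\|)\cdot(\text{stuff})$ — more precisely, bounding $\| -fRe_3 - A\|$ by a term of the form $\|A\|\,\|e_R\|$ times a constant — lets me treat the attitude error as a vanishing perturbation entering the translational dynamics. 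So the overall architecture is a cascade: attitude errors $(e_R,e_\Omega,e_I)$ decay on their own (this is essentially the result of \cite{LeeLeoAJC13,TFJCHTLeeHG} with the integral term), and they feed a bounded, decaying disturbance into the translational subsystem which is itself exponentially stable at the linearized level with its own integral term.

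Next I would build the Lyapunov candidate in three pieces. For the translational/link part, use $\mathcal{V}_1 = z_1^T P z_1 + (\text{cross terms coupling } e_{\xb} \text{ to } z_1) + \tfrac{1}{2}\|e_{\xb} - e_{\xb}^*\|^2$ where $e_{\xb}^*$ is the steady-state integral value that cancels $\Delta_x$; here $P$ is the Lyapunov matrix from $\mathds{A}^T P + P\mathds{A} = -Q$ guaranteed at the end of Section~3. The role of the $(P\mathds{B})^T z_1$ integrand in \refeqn{exterm} is precisely to make the cross terms telescope so that $\dot{\mathcal{V}}_1 \le -c\|z_1\|^2 + (\text{higher order } \g) + (\text{attitude-error coupling})$. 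For the attitude part, use the standard quadrotor configuration-error function $\mathcal{V}_2 = \tfrac{1}{2}e_\Omega\cdot J e_\Omega + k_R \Psi(R,R_c) + c_2 e_R\cdot J e_\Omega + (\text{integral term in } e_I)$, with $\Psi$ the trace-form attitude error; its derivative is negative definite in $(e_R,e_\Omega)$ up to the integral-error bookkeeping, again using the structure of \refeqn{integralterm}. Then set $\mathcal{V} = \mathcal{V}_1 + \mathcal{V}_2$ and absorb the cross-coupling: the $\|A\|\|e_R\|$-type terms in $\dot{\mathcal{V}}_1$ are dominated by the negative-definite attitude terms in $\dot{\mathcal{V}}_2$ provided $k_R$ (and the translational gains) are chosen large relative to the coupling constants, and provided the initial errors lie in a sublevel set small enough that $\|A\|$ stays bounded and $\Psi < 2$ (so the attitude error function behaves quadratically).

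The execution then runs as follows: (1) establish local bounds — on a sublevel set $\mathcal V \le \rho$, bound $\|A\|$, bound the higher-order remainder $\|\g(\xb,\dot\xb)\| \le c_g(\|\xb\| + \|\dot\xb\|)(\text{small})$, and bound $\|{-fRe_3} - A\|$; (2) compute $\dot{\mathcal V}$ and collect it as $-w^T \mathbf{W} w$ plus sign-indefinite remainders, where $w$ stacks $\|z_1\|, \|e_R\|, \|e_\Omega\|$; (3) choose the gains $k_x,k_{\dot x},k_{q_i},k_{\omega_i},k_R,k_\Omega,k_I,k_z,\sigma,c_2$ and the sublevel-set radius so that $\mathbf{W} \succ 0$ and the remainders are dominated, giving $\dot{\mathcal V} \le -c\,\|w\|^2 \le 0$; (4) conclude Lyapunov stability of the zero equilibrium (part (i)), uniform boundedness of all states including $e_I$ and $e_{\xb}$ since $\mathcal V$ is bounded and the integral-error quadratic terms are part of $\mathcal V$ (part (iii)); (5) since $\dot{\mathcal V} \le -c\|w\|^2$ and $w$ has bounded derivative (all states bounded), Barbalat's lemma gives $w\to 0$, i.e. $e_R,e_\Omega,\xb,\dot\xb \to 0$ (part (ii)). I would refer the detailed computation to an appendix and in the main text only state the gain conditions schematically.

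The hard part will be step (2)–(3): getting the cross-coupling between the attitude error and the translational/link dynamics into a genuinely dominated form without a timescale-separation shortcut. The disturbance $-fRe_3 - A$ enters \refeqn{xddot} and hence multiplies $\Mb^{-1}\Bb$ in the $z_1$ dynamics, so its contribution to $\dot{\mathcal V}_1$ is roughly $2 z_1^T P \mathds{B}(-fRe_3 - A)$, which I must bound by $c\|z_1\|\|e_R\|$ and then split via Young's inequality into a piece absorbed by $-c\|z_1\|^2$ and a piece absorbed by the attitude block — but the attitude block's negative-definiteness margin shrinks as $\Psi \to 2$, so the argument is genuinely local and the bookkeeping of constants (through $\psi_1$-type bounds on $\Psi$, the eigenvalue bounds $\lambda_m,\lambda_M$ of $J$, and $\lambda_{\min}(P),\lambda_{\max}(P)$) is delicate. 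A secondary subtlety is ensuring the two integral errors $e_{\xb}$ and $e_I$ do not destabilize anything: because they appear only through bounded saturation (for $e_{\xb}$, via $\sat_\sigma$) or are handled by completing the square against $\|e_\cdot - e_\cdot^*\|^2$, their contribution to $\dot{\mathcal V}$ should cancel the constant disturbance terms exactly at steady state, but verifying the cross terms vanish requires the specific choice of integrands in \refeqn{exterm} and \refeqn{integralterm}, which is where those definitions earn their keep.
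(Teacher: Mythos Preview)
Your proposal is correct and follows essentially the same architecture as the paper's proof: decompose into attitude and translational/link subsystems, bound the thrust-direction error by $\|A\|\,\|e_R\|$, build $\mathcal{V}=\mathcal{V}_1+\mathcal{V}_2$ with $\mathcal{V}_1$ based on the Lyapunov matrix $P$ from Section~\ref{sec:LCS} and $\mathcal{V}_2$ the standard $\SO$ attitude-error function with an integral correction, collect $\dot{\mathcal{V}}$ into a quadratic form in $(\|z_1\|,\|e_R\|,\|e_\Omega\|)$, and finish with LaSalle--Yoshizawa/Barbalat. The one place the paper differs in detail from your sketch is the integral-error contribution to $\mathcal{V}_1$: because $e_{\xb}$ enters the control through $\sat_\sigma$, the paper uses the Lur'e-type term $2\int_{p_{eq}}^{e_{\xb}}(\Bb K_z\sat_\sigma(\mu)-\Bb\Delta_x)\cdot d\mu$ rather than your plain quadratic $\tfrac{1}{2}\|e_{\xb}-e_{\xb}^*\|^2$, which is what makes the $\sat_\sigma$ term cancel exactly against $\dot e_{\xb}=(P\mathds{B})^T z_1$ in $\dot{\mathcal{V}}_1$; you anticipated that the specific integrand in \refeqn{exterm} would ``earn its keep'' here, and this is precisely where it does.
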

\begin{proof}
See Appendix \ref{sec:stability1}
\end{proof}
By utilizing geometric control systems for quadrotor, we show that the hanging equilibrium of the links can be asymptotically stabilized while translating the quadrotor to a desired position. The control systems proposed explicitly consider the coupling effects between the cable/load dynamics and the quadrotor dynamics. We presented a rigorous Lyapunov stability analysis to establish stability properties without any timescale separation assumptions or singular perturbation, and a new nonlinear integral control term is designed to guarantee robustness against unstructured uncertainties in both rotational and translational dynamics.

%%%%%%%%%%%%%%%%%%%%%%%%%%%%%
%%%%%%%%%%%%%%%%%%%%%%%%%%%%%%%%%%%%%%
%%%%%%%%%%%%%%%%%%%%%%%%%%%%%%%%%%%%%%
%%%%%%%%%%%%%%%%%%%%%%%%%%%%%%%%%%%%%%

\section{Numerical Example}\label{sec:NE}

The desirable properties of the proposed control system are illustrated by a numerical example. Properties of a quadrotor are chosen as
\begin{align*}
m=0.5\,\mathrm{kg},\quad J=\mathrm{diag}[0.557,\,0.557,\,1.05]\times 10^{-2}\,\mathrm{kgm^2}.
\end{align*}
Five identical links with $n=5$, $m_i=0.1\,\mathrm{kg}$, and $l_i=0.1\,\mathrm{m}$ are considered. Controller parameters are selected as follows: $k_x=12.8$, $k_v=4.22$, ${k_R}=0.65$, ${k_\Omega}= 0.11$, $k_{I}=1.5$, $c_{1}=c_{2}=0.7$. Also $k_{q}$ and $k_{\omega}$ are defined as 
\begin{align*}
&k_q=[11.01,\,6.67,\,1.97,\,0.41,\,0.069],\\
&k_\omega=[0.93,\,0.24,\,0.032,\,0.030,\,0.025].
\end{align*}
%%%%%%%%%%%%%%%%%%%%%%%%%%%%%%%%%%%%%%%%%%%%%%%%%%
\begin{figure}
\centerline{\hspace{-0.5cm}
	\subfigure[Attitude error function $\psi$]{
		\includegraphics[width=0.6\columnwidth]{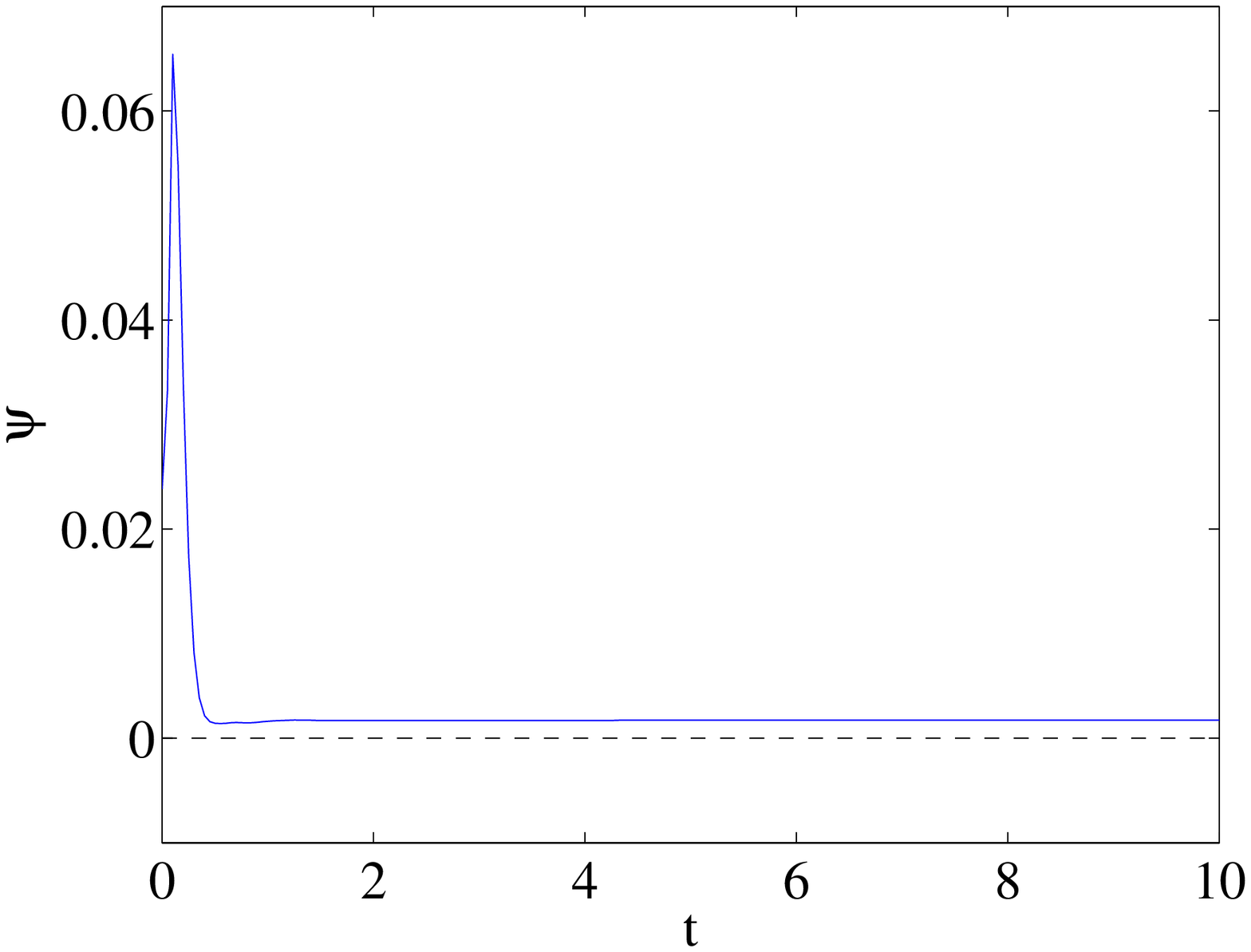}}\hspace{-0.5cm}
	\subfigure[Direction error $e_{q}$ and angular velocity error $e_{\omega}$ for links]{
		\includegraphics[width=0.6\columnwidth]{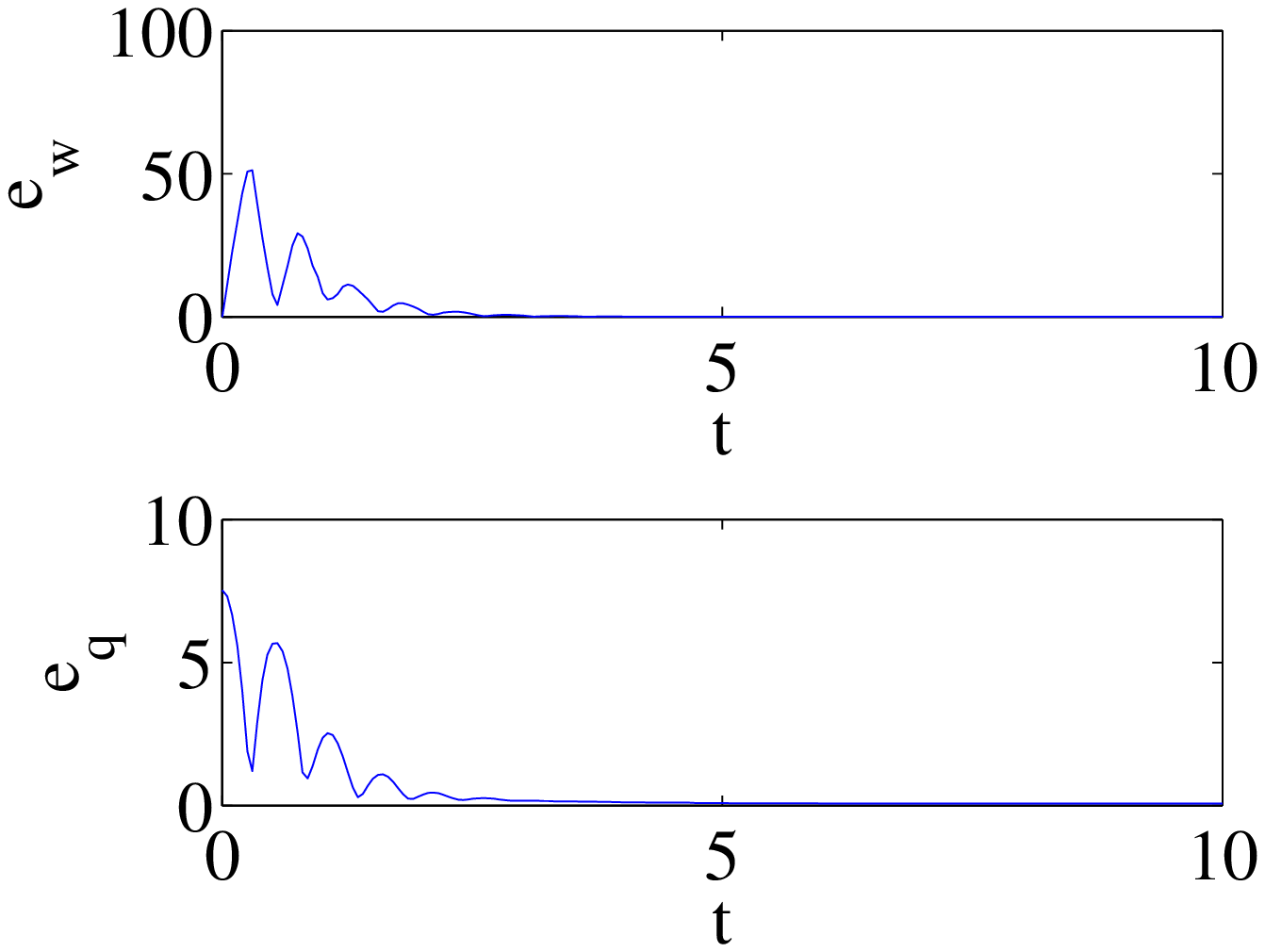}}
}
\centerline{\hspace{-0.3cm}
	\subfigure[Quadrotor angular velocity $\Omega$:blue, $\Omega_{d}$:red]{
		\includegraphics[width=0.6\columnwidth]{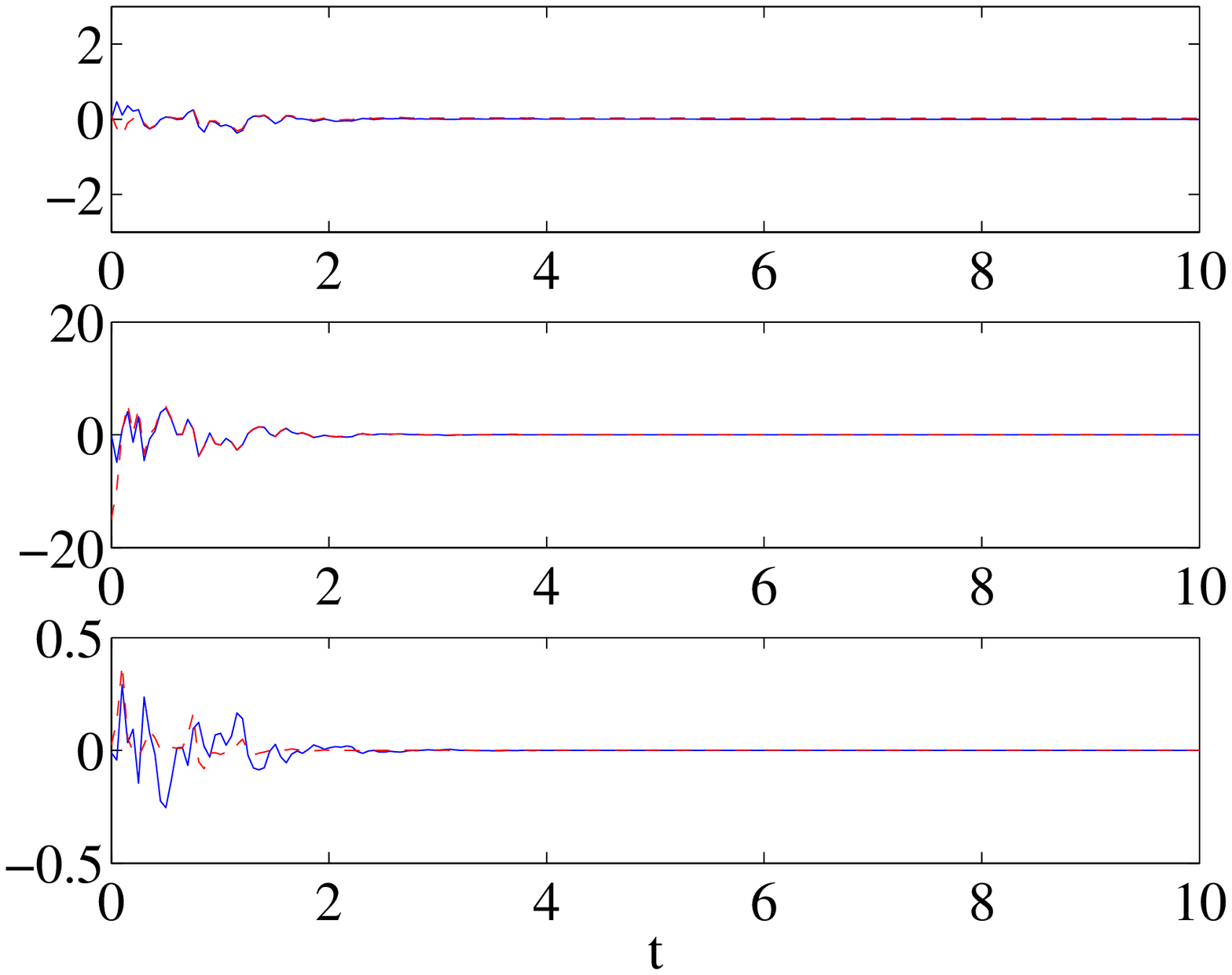}}\hspace{-0.5cm}
	\subfigure[Control force $u$]{
		\includegraphics[width=0.6\columnwidth]{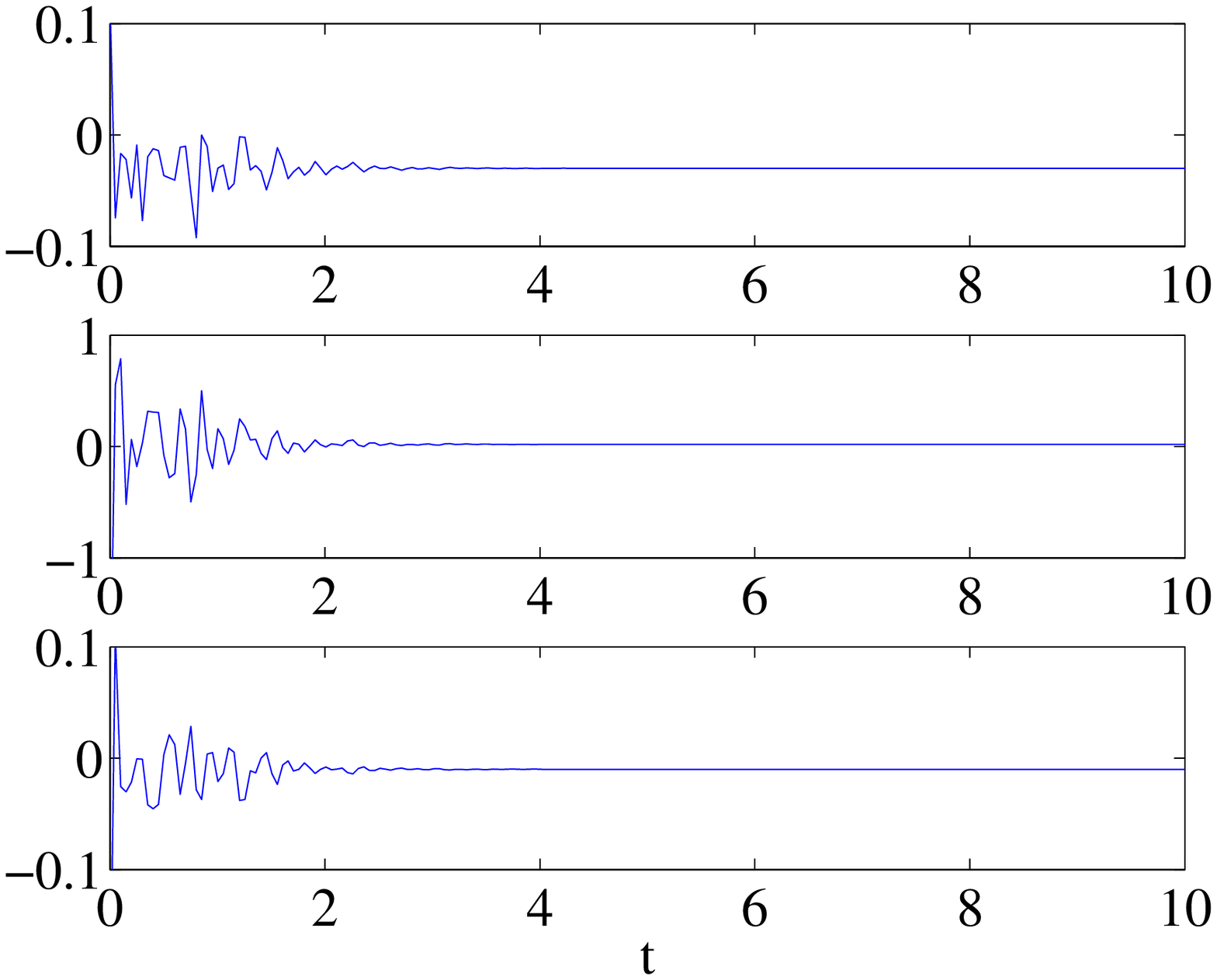}}
}
\centerline{\hspace{-0.3cm}
	\subfigure[Quadrotor position]{
		\includegraphics[width=0.6\columnwidth]{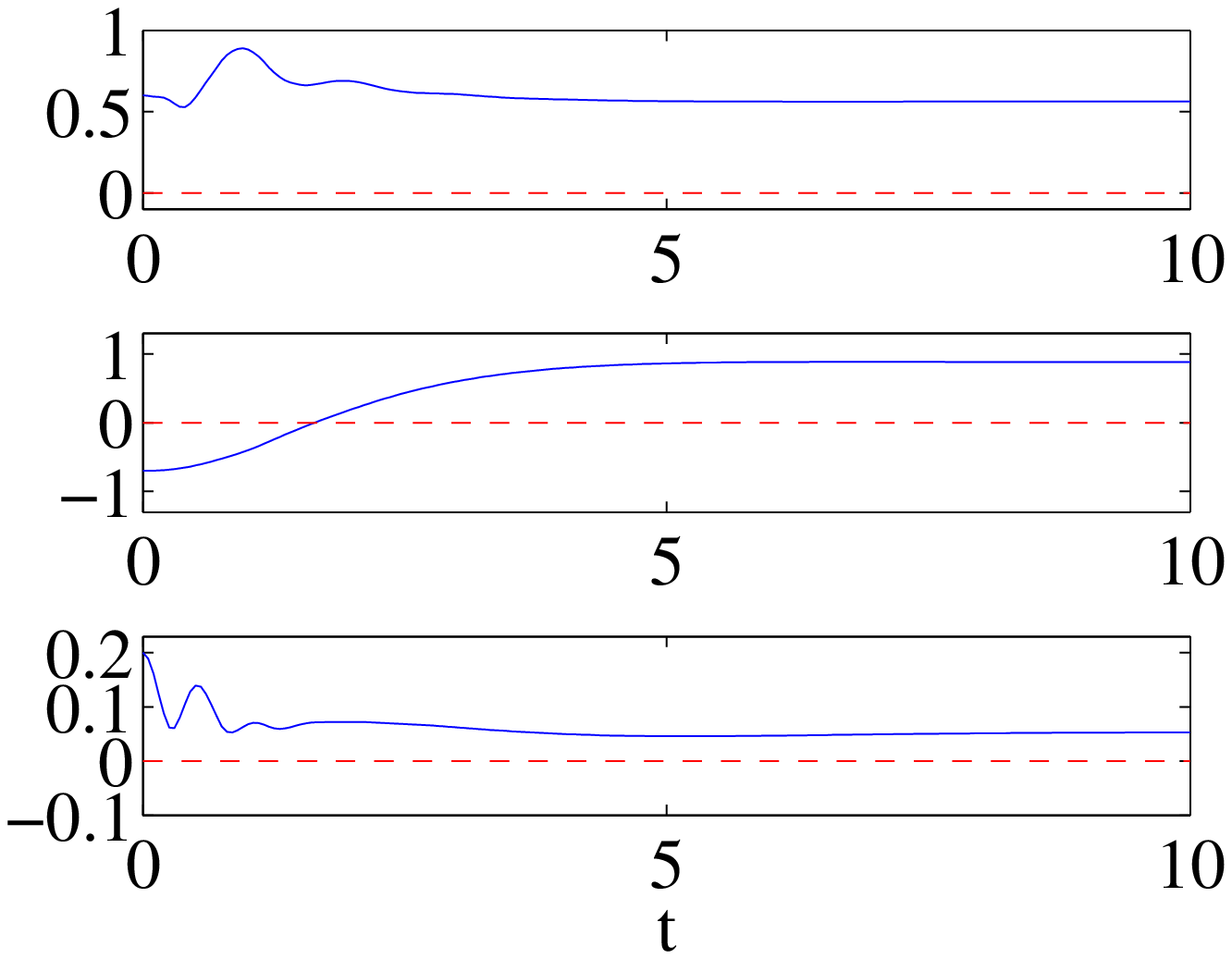}}\hspace{-0.5cm}
	\subfigure[Quadrotor velocity]{
		\includegraphics[width=0.6\columnwidth]{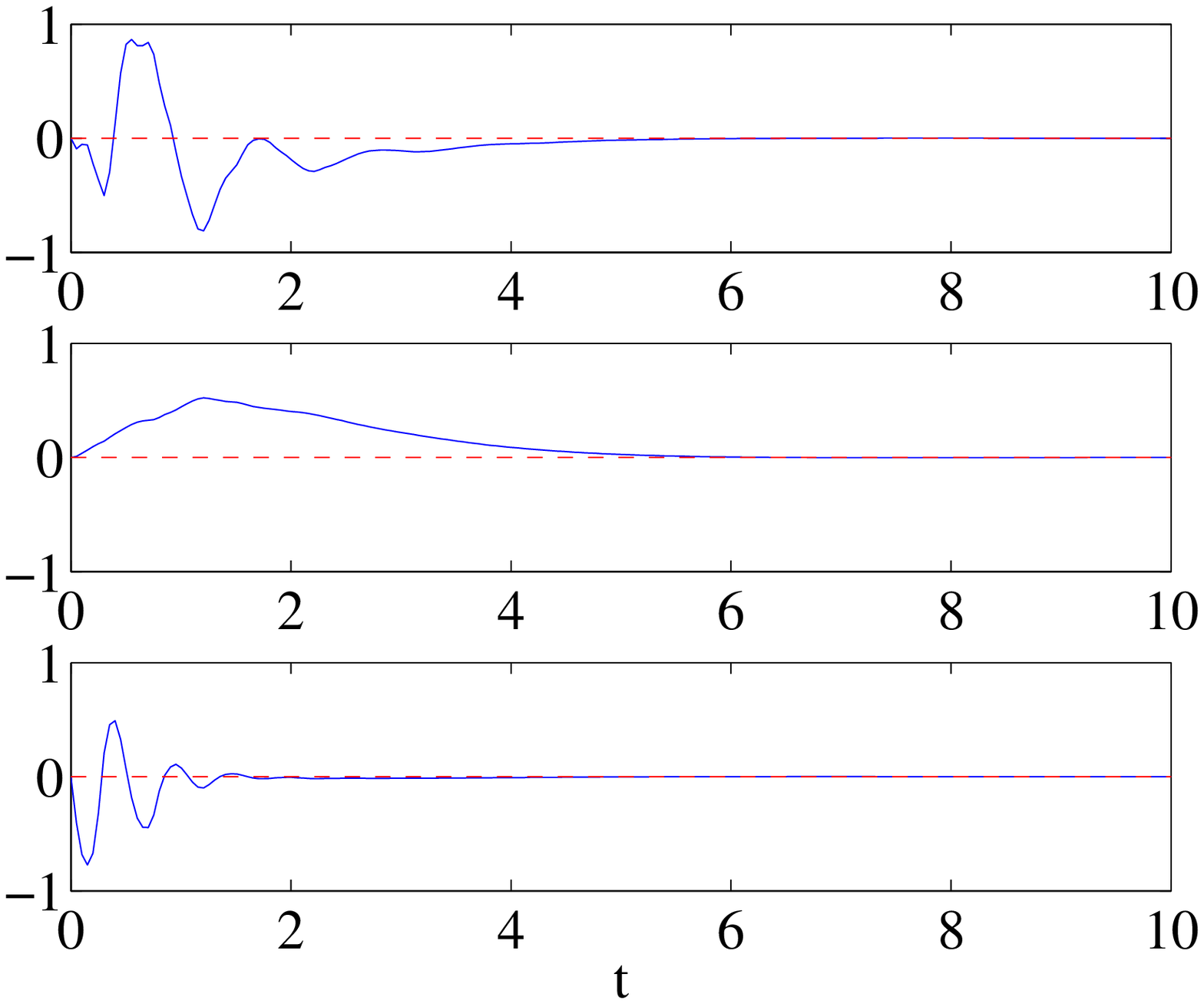}}
}
\caption{Stabilization of a payload connected to a quadrotor with 5 links without Integral term}\label{fig:simresultsWO}
\end{figure}
The desired location of the quadrotor is selected as $x_d=0_{3\times 1}$. The initial conditions for the quadrotor are given by
\begin{gather*}
x(0)=[0.6;-0.7;0.2], \ \dot{x}(0)=0_{3\times 1},\\
R(0)=I_{3\times 3},\quad \Omega(0)=0_{3\times 1}.
\end{gather*}
The initial direction of the links are chosen such that the cable is curved along the horizontal direction, as illustrated at Figure \ref{fig:fisrt_sub11}, and the initial angular velocity of each link is chosen as zero. 

The following two fixed disturbances also considered in the equations
\begin{align*}
&\Delta_R=[0.03,-0.02,0.01]^T,\\
&\Delta_x=[-0.0125,0.0125,0.01]^T.
\end{align*}
We define the two following error functions to show the stabilizing performance for the links:
\begin{align}
e_{q}=\sum_{i=1}^{n}{\|q_{i}-e_3\|},\quad e_{\omega}=\sum_{i=1}^{n}{\|\omega_{i}\|}.
\end{align}
%%%%%%%%%%%%%%%%%%%%%%%%%%%%%%%%%%%%%%%%%%%%%%%%%%
\begin{figure}
\centerline{
	\subfigure[Attitude error function $\psi$]{\hspace{0.3cm}
		\includegraphics[width=0.6\columnwidth]{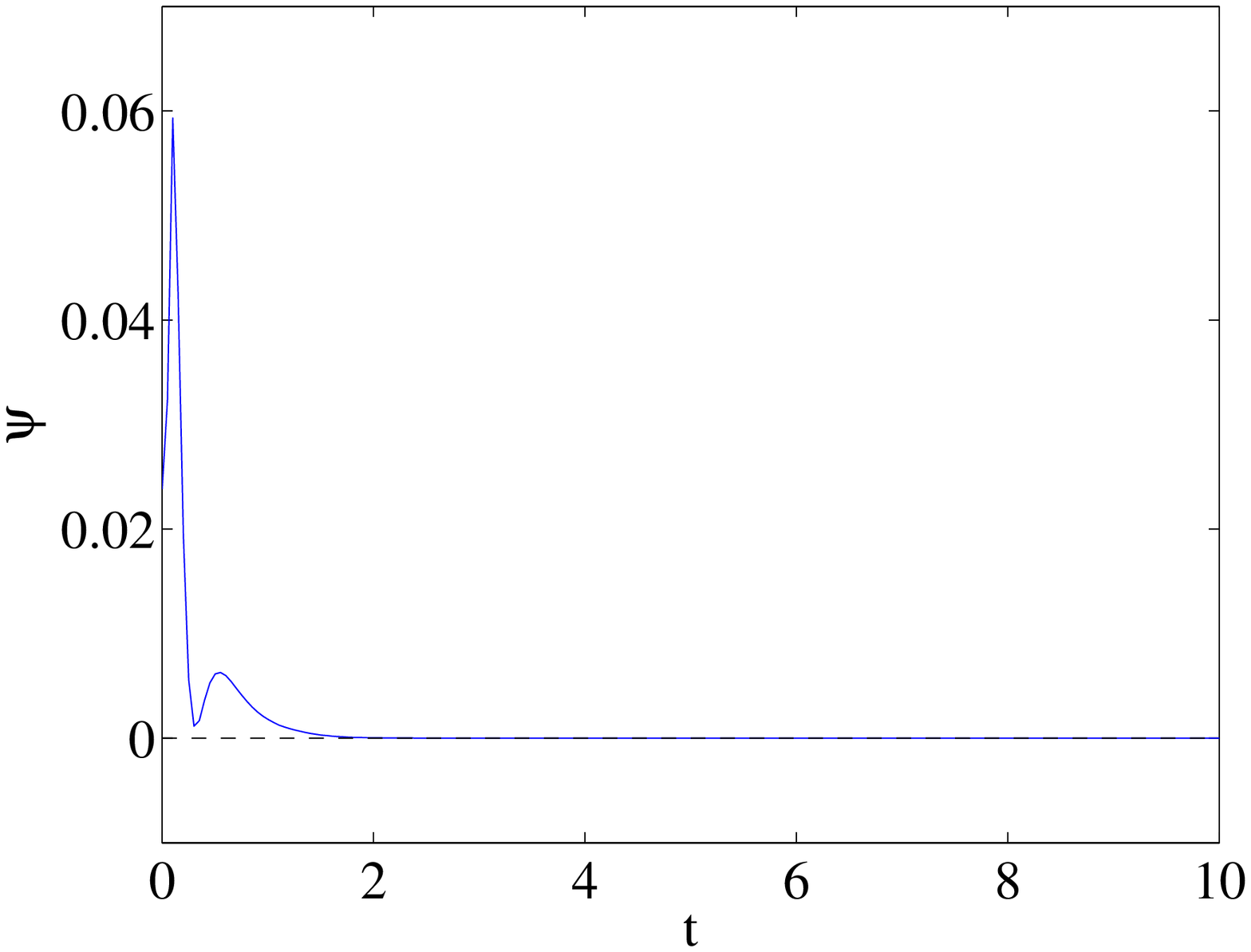}}\hspace{-0.5cm}
	\subfigure[Direction error $e_{q}$ and angular velocity error $e_{\omega}$ for links]{
		\includegraphics[width=0.6\columnwidth]{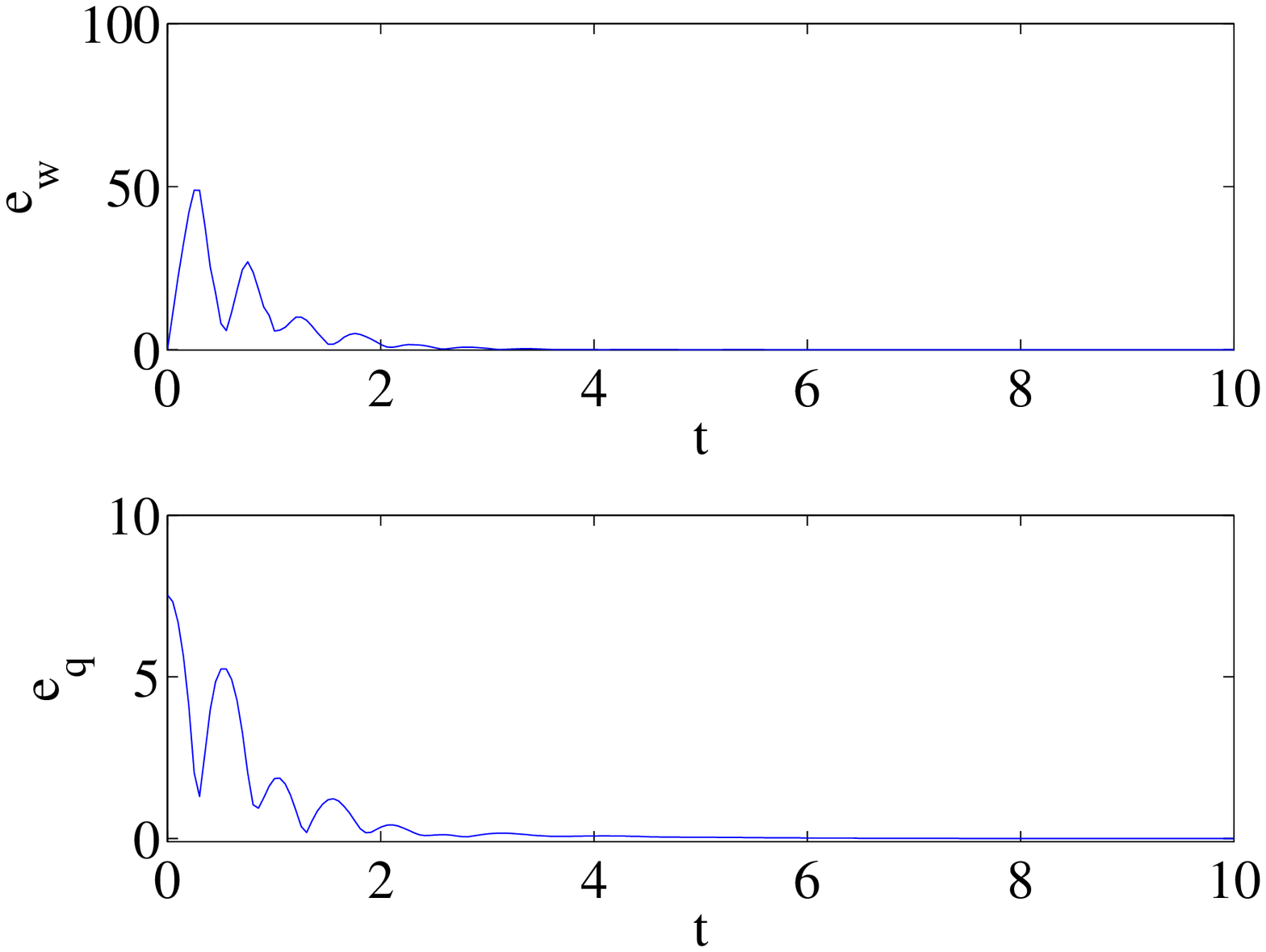}}
}
\centerline{\hspace{0.3cm}
	\subfigure[Quadrotor angular velocity $\Omega$:blue, $\Omega_{d}$:red]{
		\includegraphics[width=0.6\columnwidth]{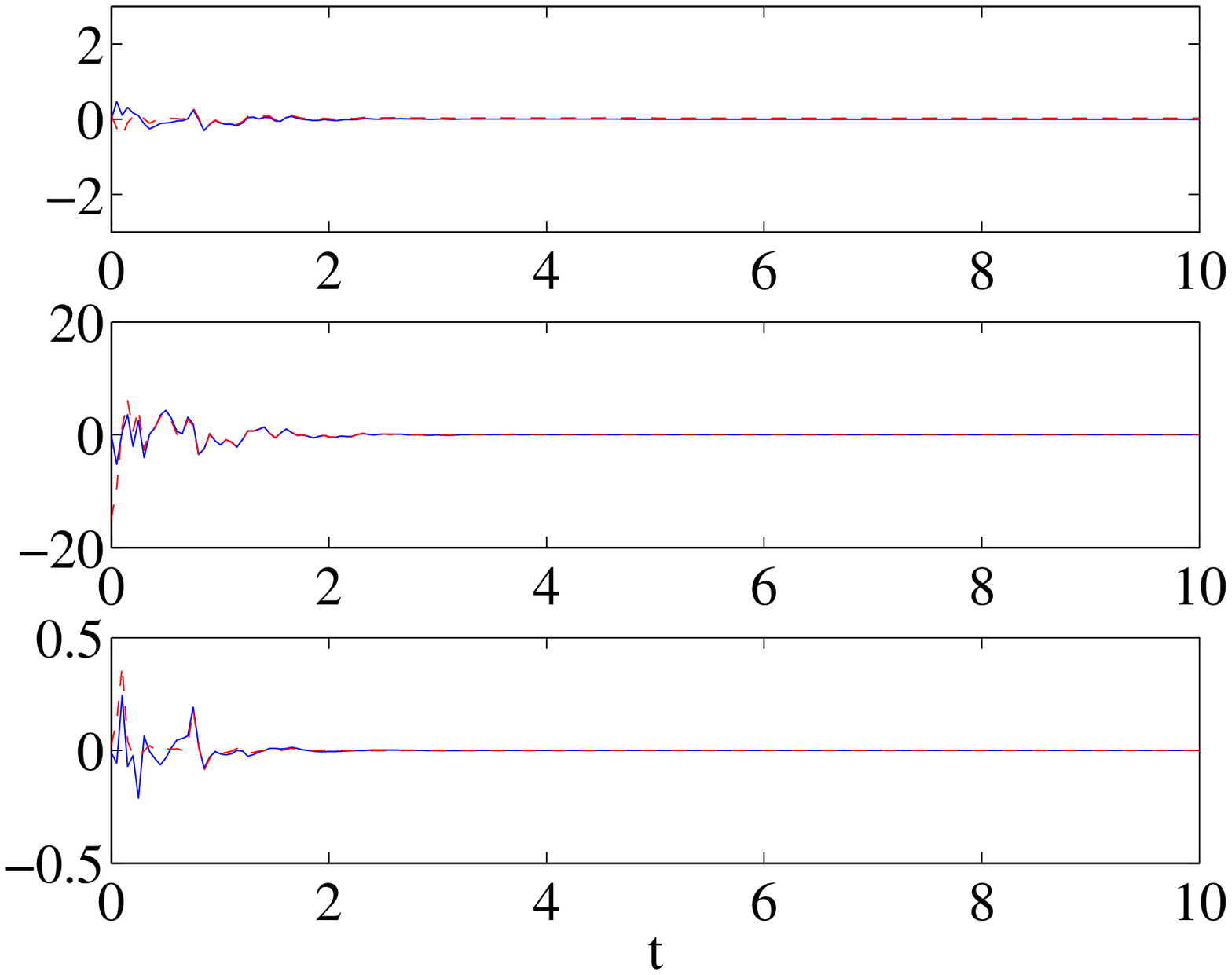}}\hspace{-0.5cm}
	\subfigure[Control force $u$]{
		\includegraphics[width=0.6\columnwidth]{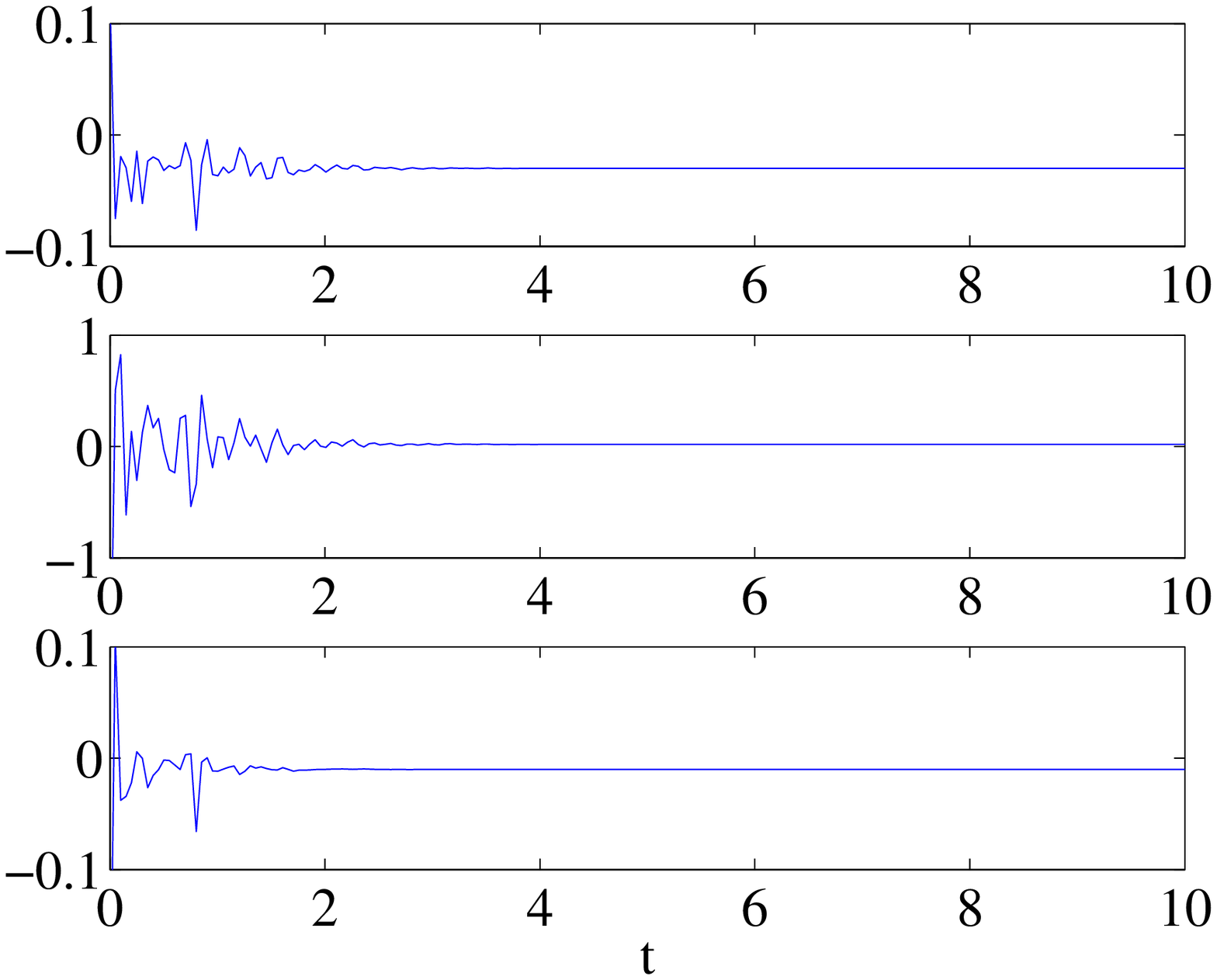}}
}
\centerline{\hspace{0.3cm}
	\subfigure[Quadrotor position]{
		\includegraphics[width=0.6\columnwidth]{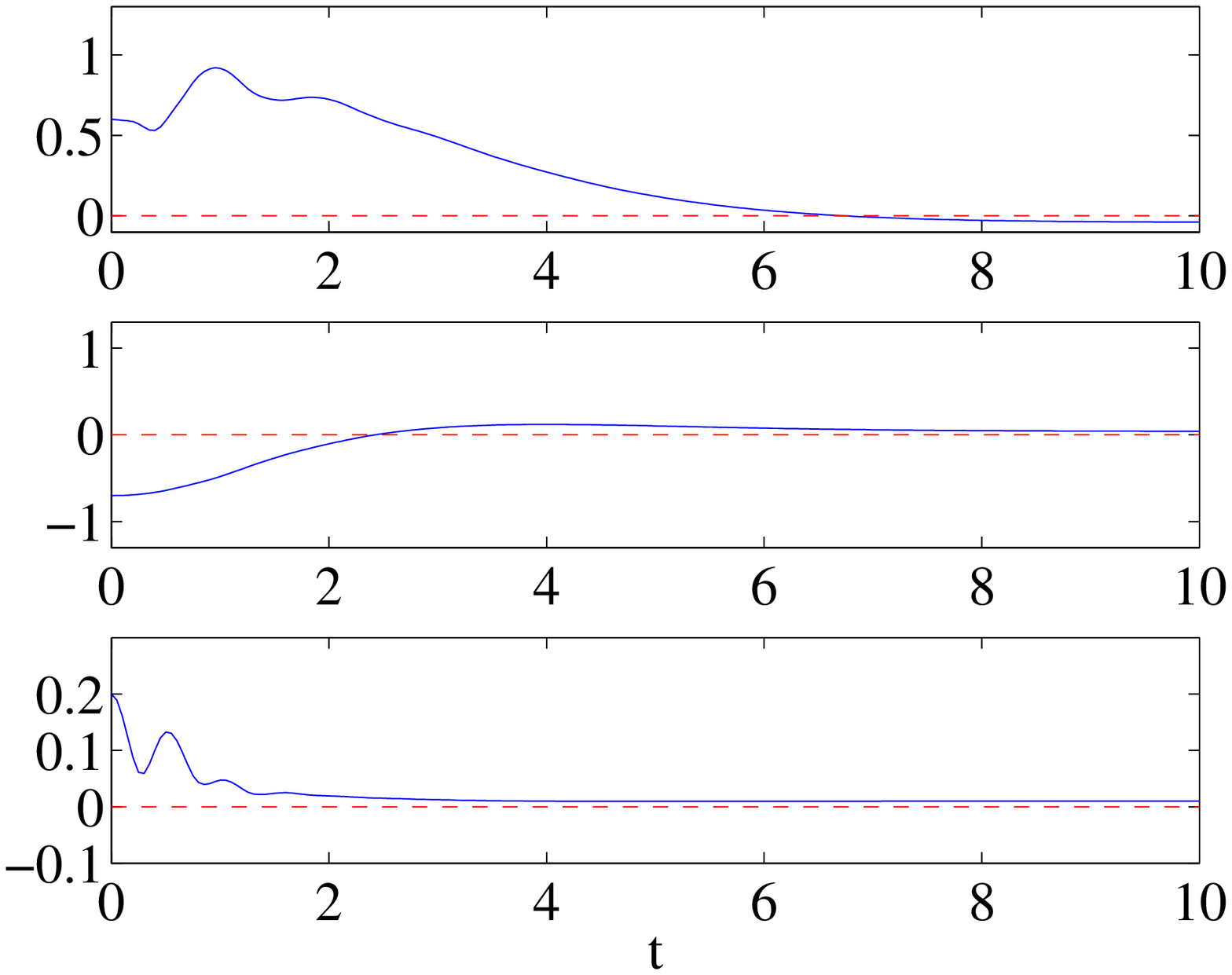}}\hspace{-0.5cm}
	\subfigure[Quadrotor velocity]{
		\includegraphics[width=0.6\columnwidth]{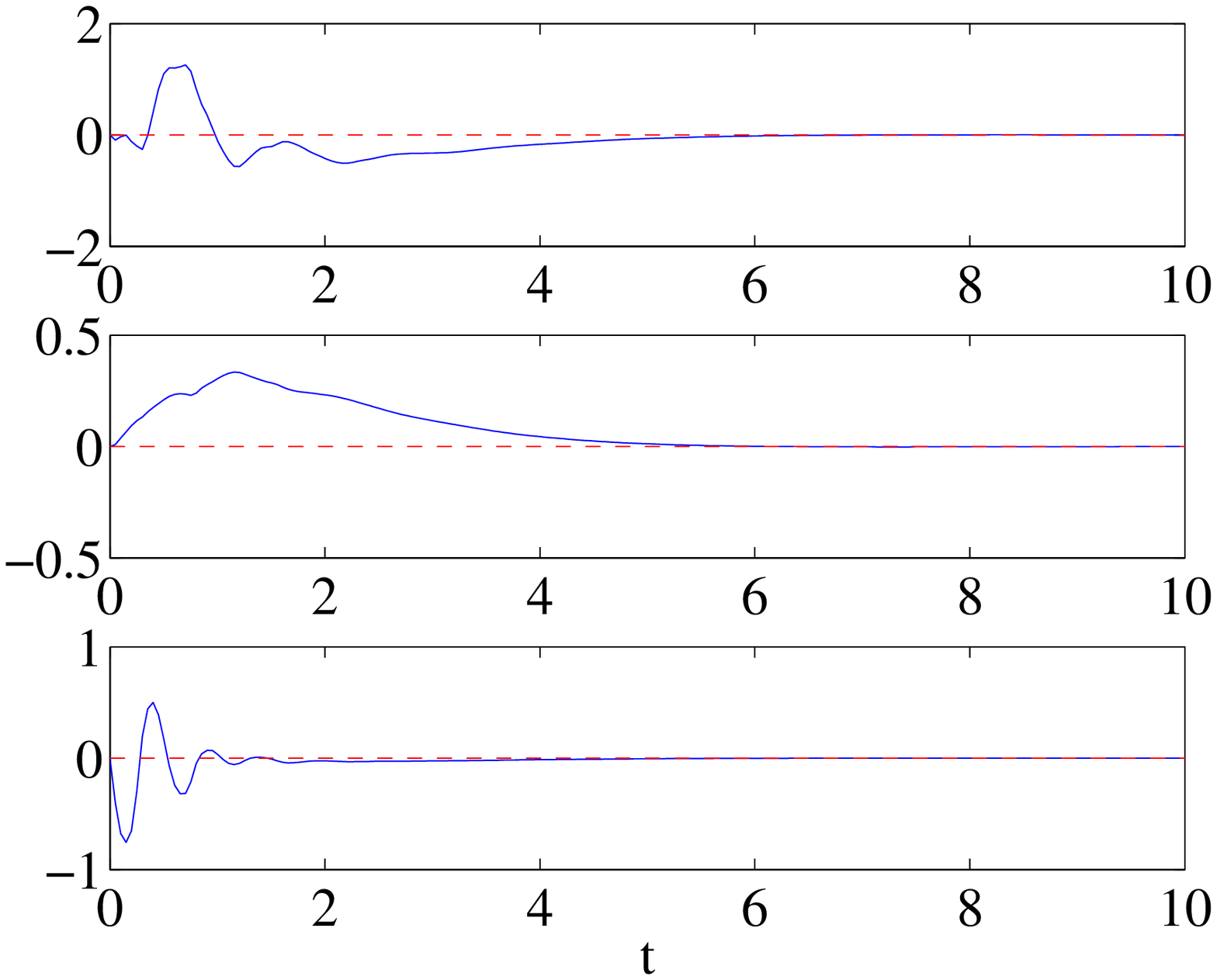}}
}
\caption{Stabilization of a payload connected to a quadrotor with 5 links with Integral term}\label{fig:simresultsW}
\end{figure}
Simulation results are illustrated at Figures \ref{fig:simresultsWO} and \ref{fig:simresultsW} where quad rotor stabilize the payload while reducing the direction error and the angular velocity error of the link. 
The corresponding maneuvers of the quadrotor and the links are illustrated by snapshots at Figure \ref{animationsim}. 
We considered two cases for this numerical simulation to compare the effect of the proposed integral term in the presence of disturbances as follows: (i) with integral term and (ii) without integral term, to emphasize the effect of the integral term. Comparison between Figure \ref{fig:simresultsWO} and \ref{fig:simresultsW} shows that the integral terms eliminates the steady state error significantly in presence of fixed disturbances where the position $x$ of the quadrotor converges to the desired value $x_d$ while stabilizing the payload and links below the quadrotor.
%%%%%%%%%%%%%%%%%%%%%%%%%%%%%%%%%%%%%%%%%%%%%%%%%%

\begin{figure}%[!hp]
\centering
\subfigure[$ t =0 $]
{
\includegraphics[width=0.5in]{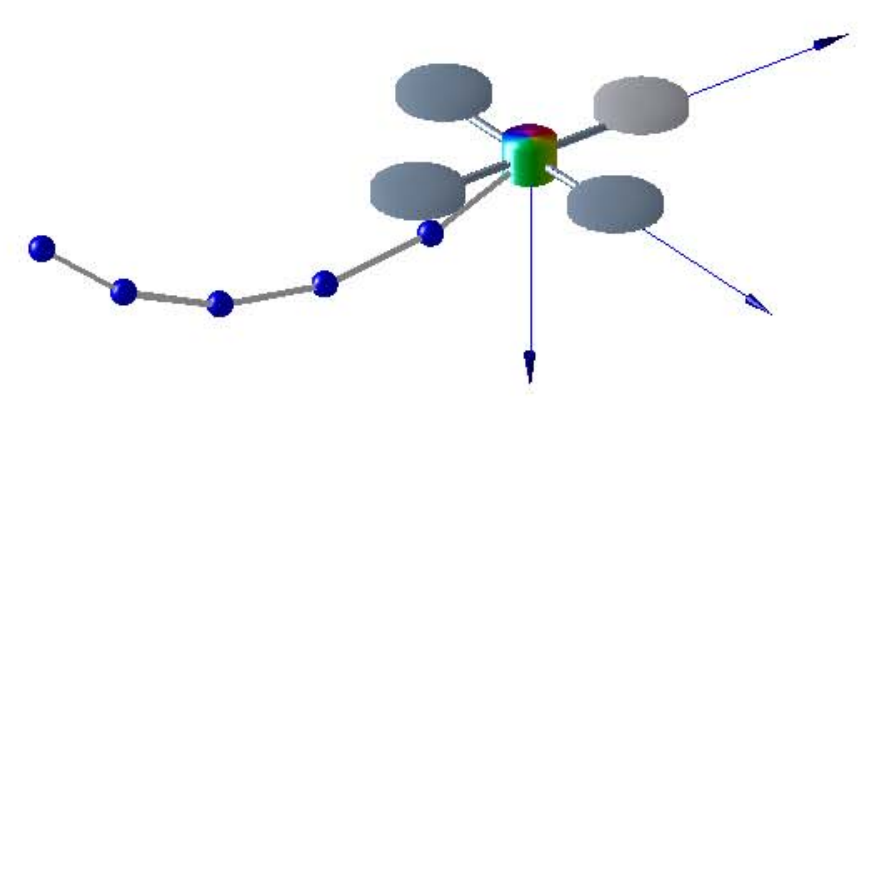}
\label{fig:fisrt_sub11}
}
\subfigure[$ t =0.2 $]
{
\includegraphics[width=0.5in]{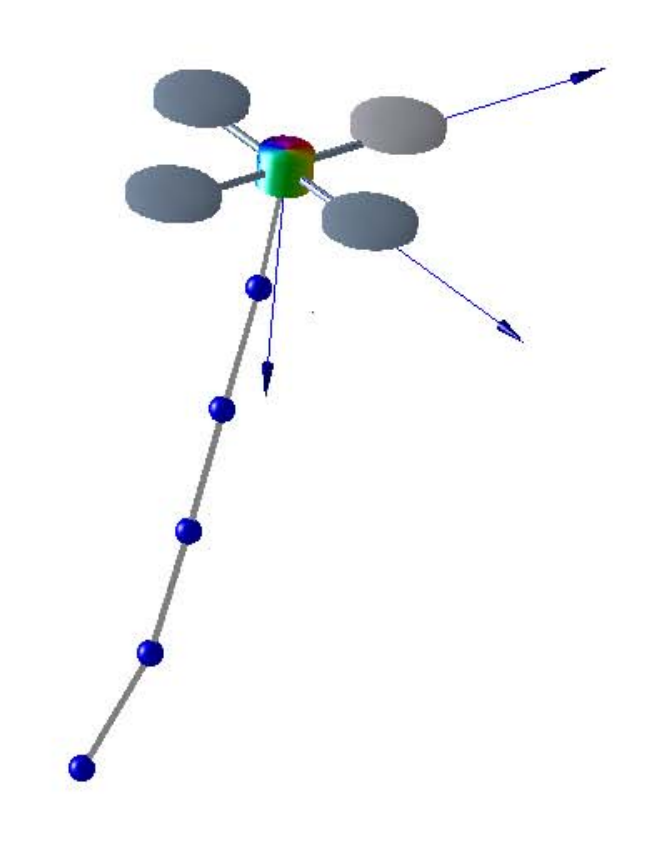}
}
\subfigure[$ t =0.35 $]
{
\includegraphics[width=0.5in]{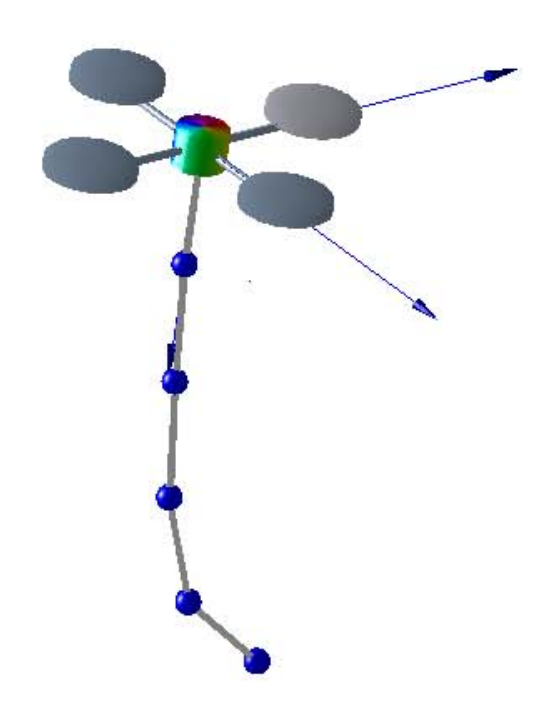}
}
\subfigure[$ t =0.40 $]
{
\includegraphics[width=0.5in]{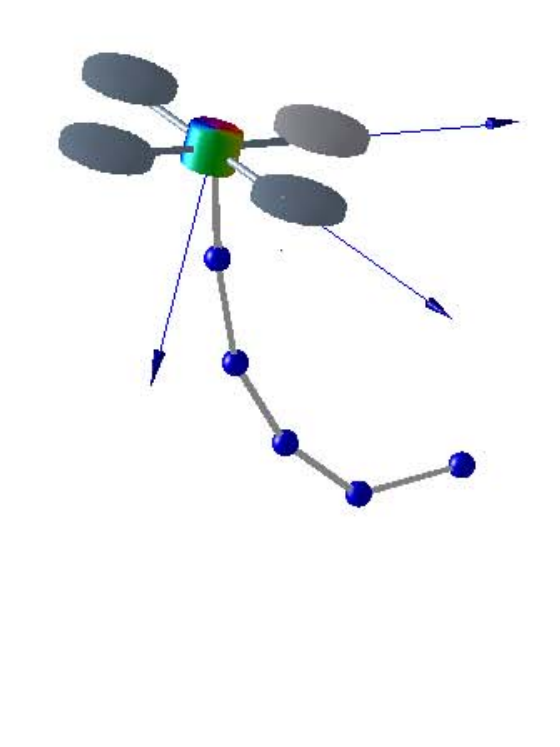}
}
\subfigure[$ t =0.42 $]
{
\includegraphics[width=0.5in]{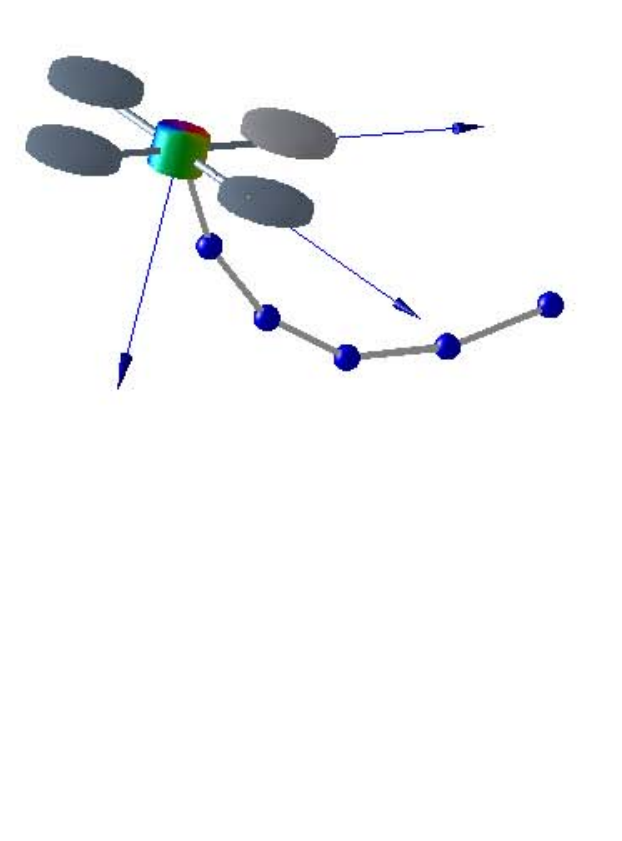}
}\\
\subfigure[$ t =0.45 $]
{
\includegraphics[width=0.5in]{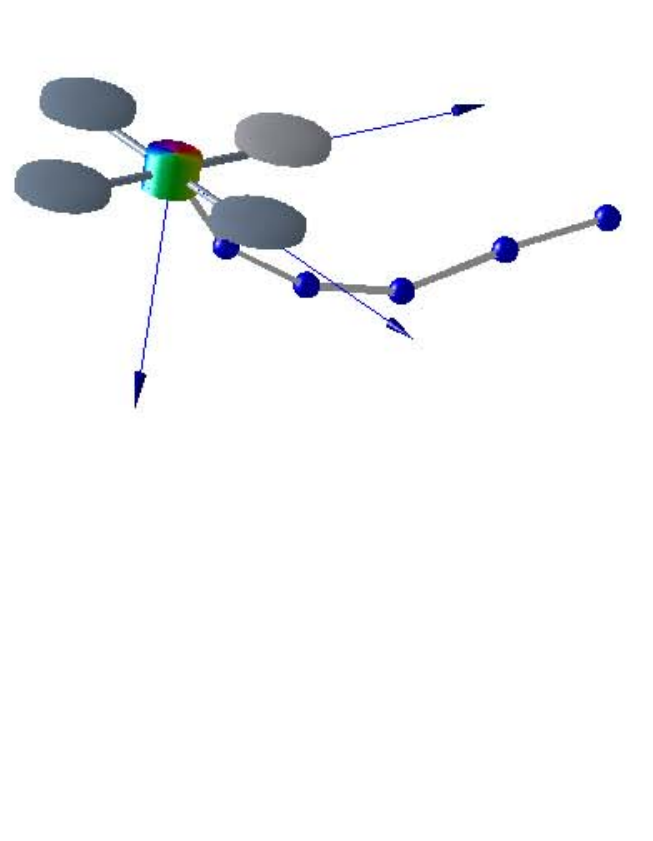}
}
\subfigure[$ t =0.5 $]
{
\includegraphics[width=0.5in]{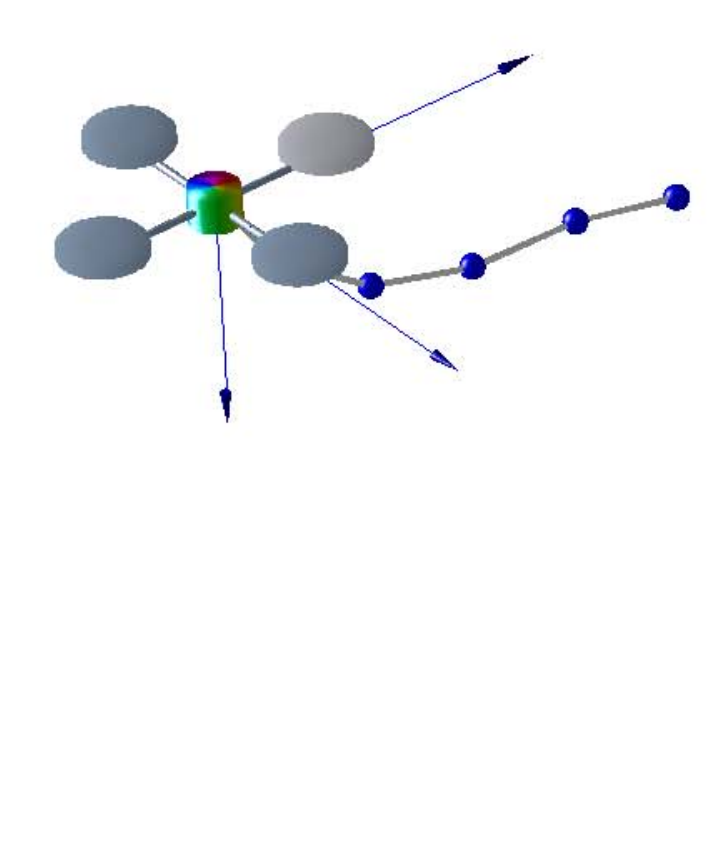}
}
\subfigure[$ t =0.6 $]
{
\includegraphics[width=0.5in]{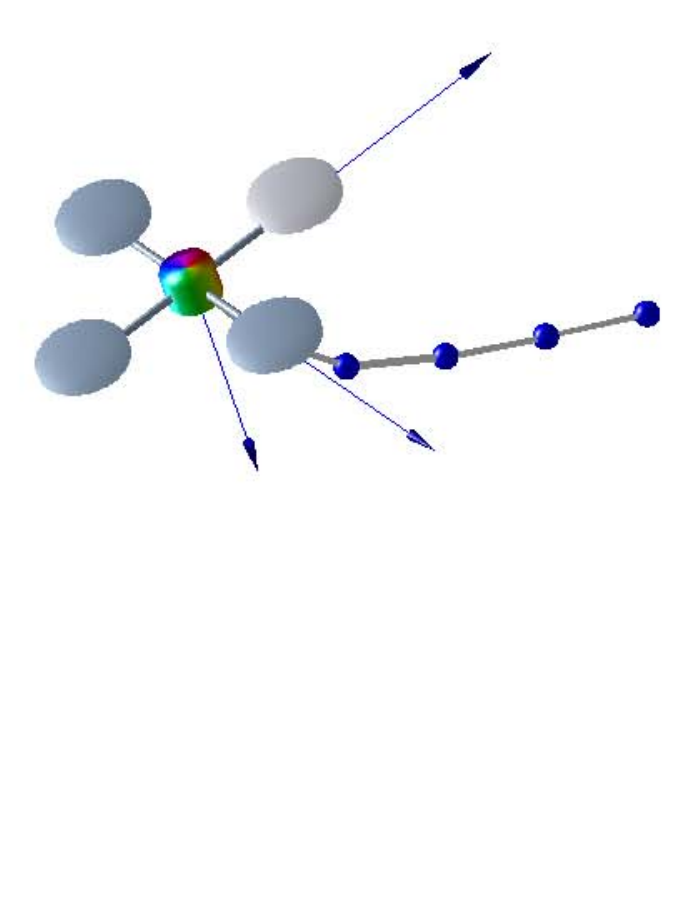}
}
\subfigure[$ t =0.7 $]
{
\includegraphics[width=0.5in]{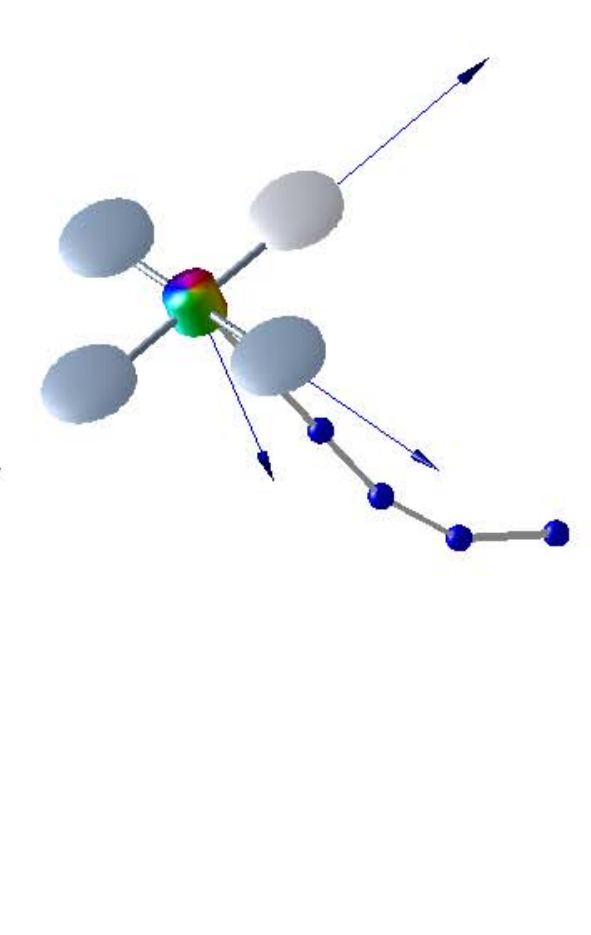}
}
\subfigure[$ t =0.8 $]
{
\includegraphics[width=0.5in]{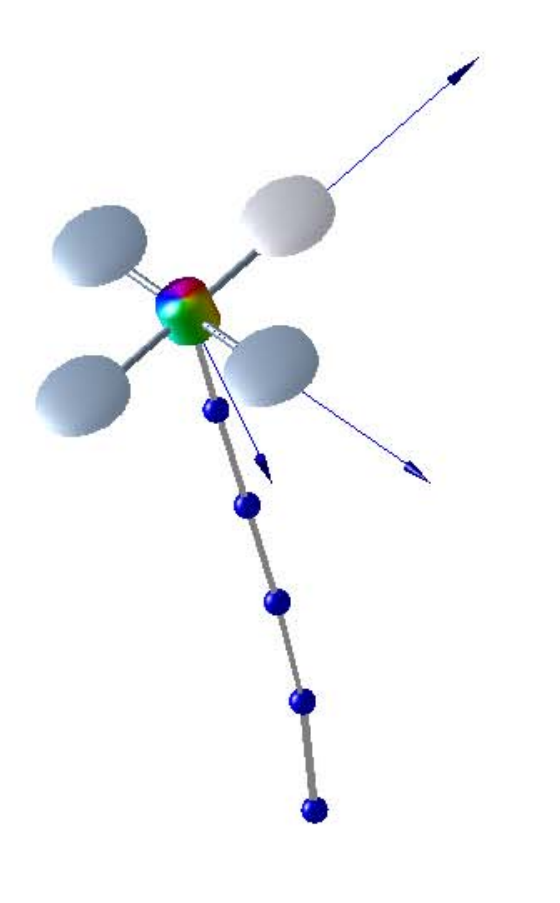}
}\\\subfigure[$ t =0.9 $]
{
\includegraphics[width=0.5in]{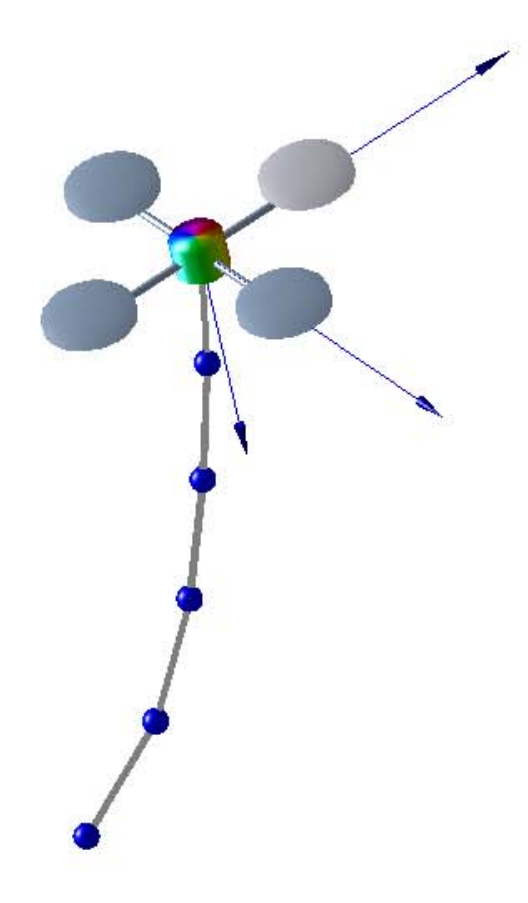}
}
\subfigure[$ t =1.3 $]
{
\includegraphics[width=0.5in]{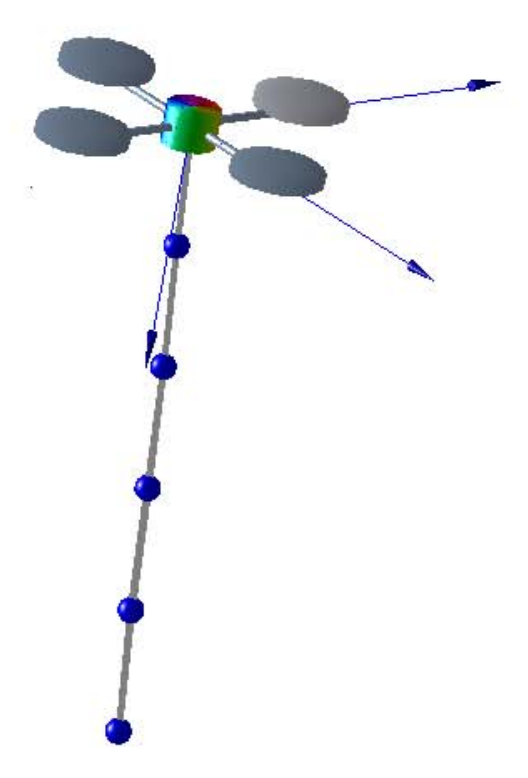}
}
\subfigure[$ t =1.5 $]
{
\includegraphics[width=0.5in]{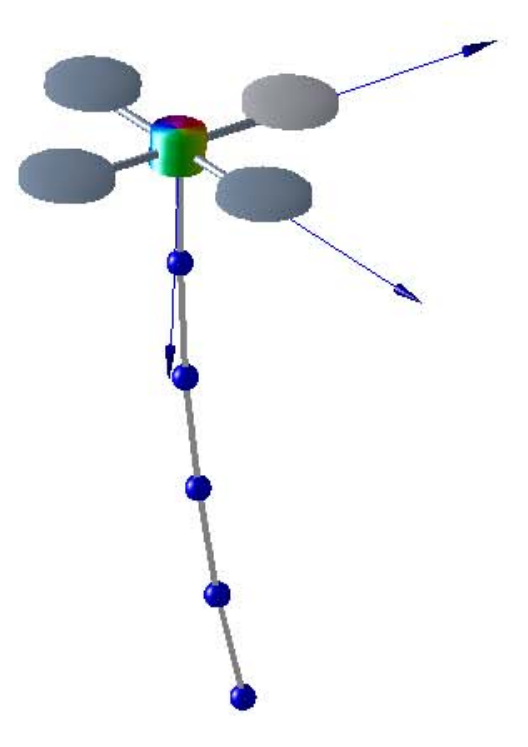}
}
\subfigure[$ t =2.0 $]
{
\includegraphics[width=0.5in]{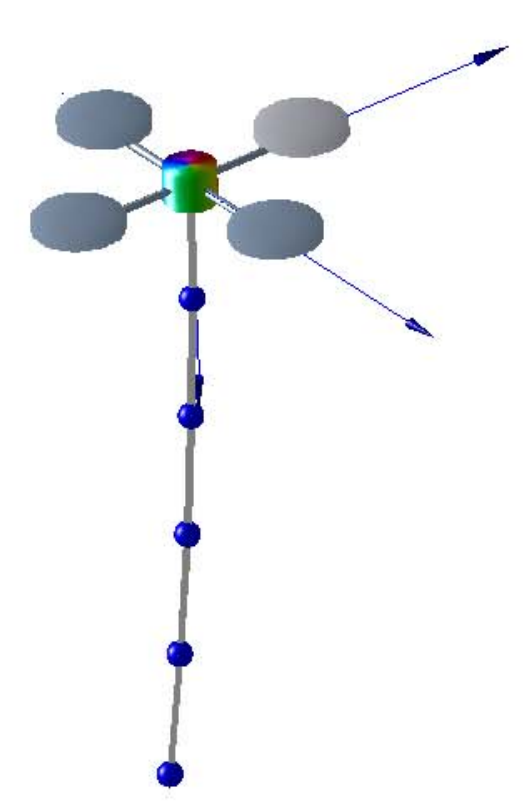}
}
\subfigure[$ t =10.0 $]
{
\includegraphics[width=0.5in]{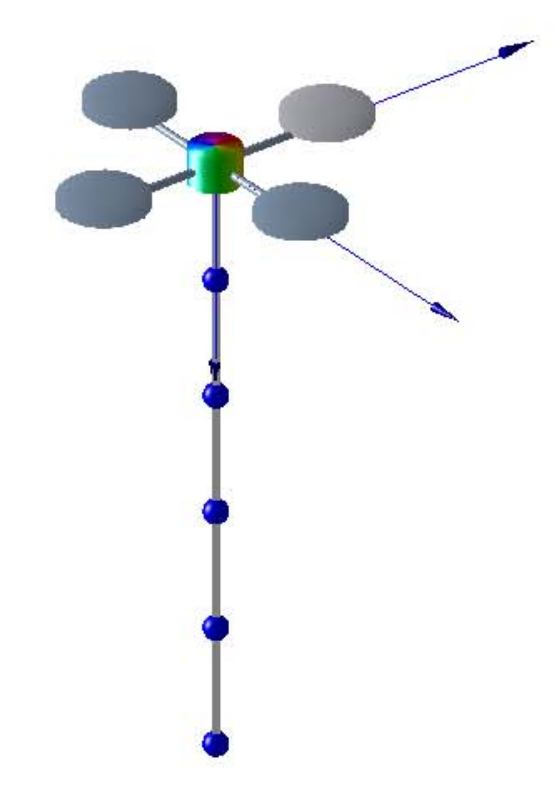}
}
\caption{Snapshots of the controlled maneuver}
\label{animationsim}
\end{figure}
%%%%%%%%%%%%%%%%%%%%%%%%%%%%%%%%%%%%%%%%%%%%%%%%%%%%%%%%%%%%%%%%%%%%%%%%%%%%%%%%
\section{Experimental Results}\label{sec:ER}
Experimental results of the proposed controller are presented in this section. A quadrotor UAV is developed with the following configuration as illustrated at Figure \ref{fig:QuadHW}:
\begin{itemize}
\item Gumstix Overo computer-in-module (OMAP 600MHz processor), running a non-realtime Linux operating system. It is connected to a ground station via WIFI.
\item Microstrain 3DM-GX3 IMU, connected to Gumstix via UART.
\item BL-CTRL 2.0 motor speed controller, connected to Gumstix via I2C.
\item Roxxy 2827-35 Brushless DC motors.
\item XBee RF module, connected to Gumstix via UART.
\end{itemize}
%The mass of the quadrotor is $m=0.755\,\mathrm{kg}$. A payload with mass of $m_1=0.036\ \mathrm{kg}$ %is attached to the quadrotor via a cable of length $l_1=0.7\ \mathrm{m}$. For the given preliminary %experiments, the cable is modeled by a single link, i.e., $n=1$. The locations of the quadrotor and the payload %are measured by a VICON motion capture system, and they are transferred to the onboard computer module %via XBee at 30Hz to estimate the linear velocity and compute the control inputs. Angular velocities of the %quadrotor are measured by the attitude sensor and the attitudes are estimated at 120Hz. 

The weight of the entire UAV system
is $0.791\mathrm{kg}$ including one battery.  A payload with mass of $m_1=0.036\ \mathrm{kg}$ is attached to the quadrotor via a cable of length $l_1=0.7\ \mathrm{m}$. The length from the center of the quadrotor to each motor rotational axis is $d=0.169\mathrm{m}$, the thrust to torque coefficient is $c_{{\tau}_f}=0.1056\mathrm{m}$ and the moment of inertia is $J=[0.56,0.56,1.05]\times 10^{-2}\,\mathrm{kgm^2}$. The angular velocity is measured from inertial measurement unit (IMU) and the attitude is estimated from IMU data. Position of the UAV is measured from motion capture system (Vicon) and the velocity is estimated from the measurement. Ground computing system receives the Vicon data and send it to the UAV via XBee. The Gumstix is adopted as micro computing unit on the UAV. It has two main threads, Vicon thread and IMU thread. The Vicon thread receives the Vicon measurement and estimates linear velocity of the quadrotor and runs at 30Hz. In IMU thread, it receives the IMU measurement and estimates the angular velocity. Also, control outputs are calculated at 120Hz in this thread.

\begin{figure}
\centerline{
	\subfigure[Hardware configuration]{
\setlength{\unitlength}{0.1\columnwidth}\scriptsize
\begin{picture}(7,4)(0,0)
\put(0,0){\includegraphics[width=0.7\columnwidth]{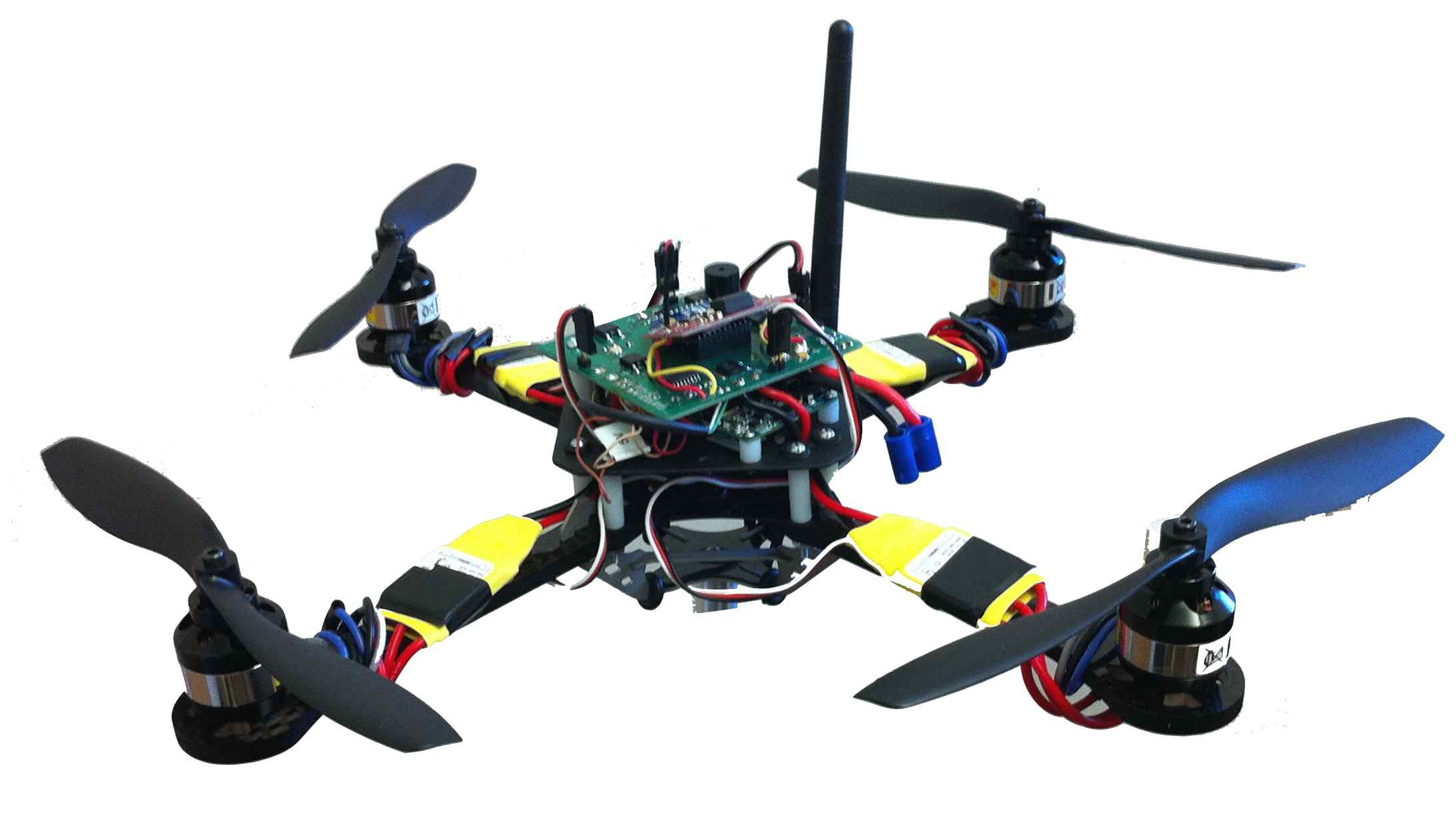}}
\put(1.95,3.2){\shortstack[c]{OMAP 600MHz\\Processor}}
\put(2.3,0){\shortstack[c]{Attitude sensor\\3DM-GX3\\ via UART}}
\put(0.85,1.4){\shortstack[c]{BLDC Motor\\ via I2C}}
\put(0.1,2.5){\shortstack[c]{Safety Switch\\XBee RF}}
\put(4.3,3.2){\shortstack[c]{WIFI to\\Ground Station}}
\put(5,2.0){\shortstack[c]{LiPo Battery\\11.1V, 2200mAh}}
\end{picture}}
	\subfigure[Quadrotor with a suspended payload]{
	\includegraphics[width=0.25\columnwidth]{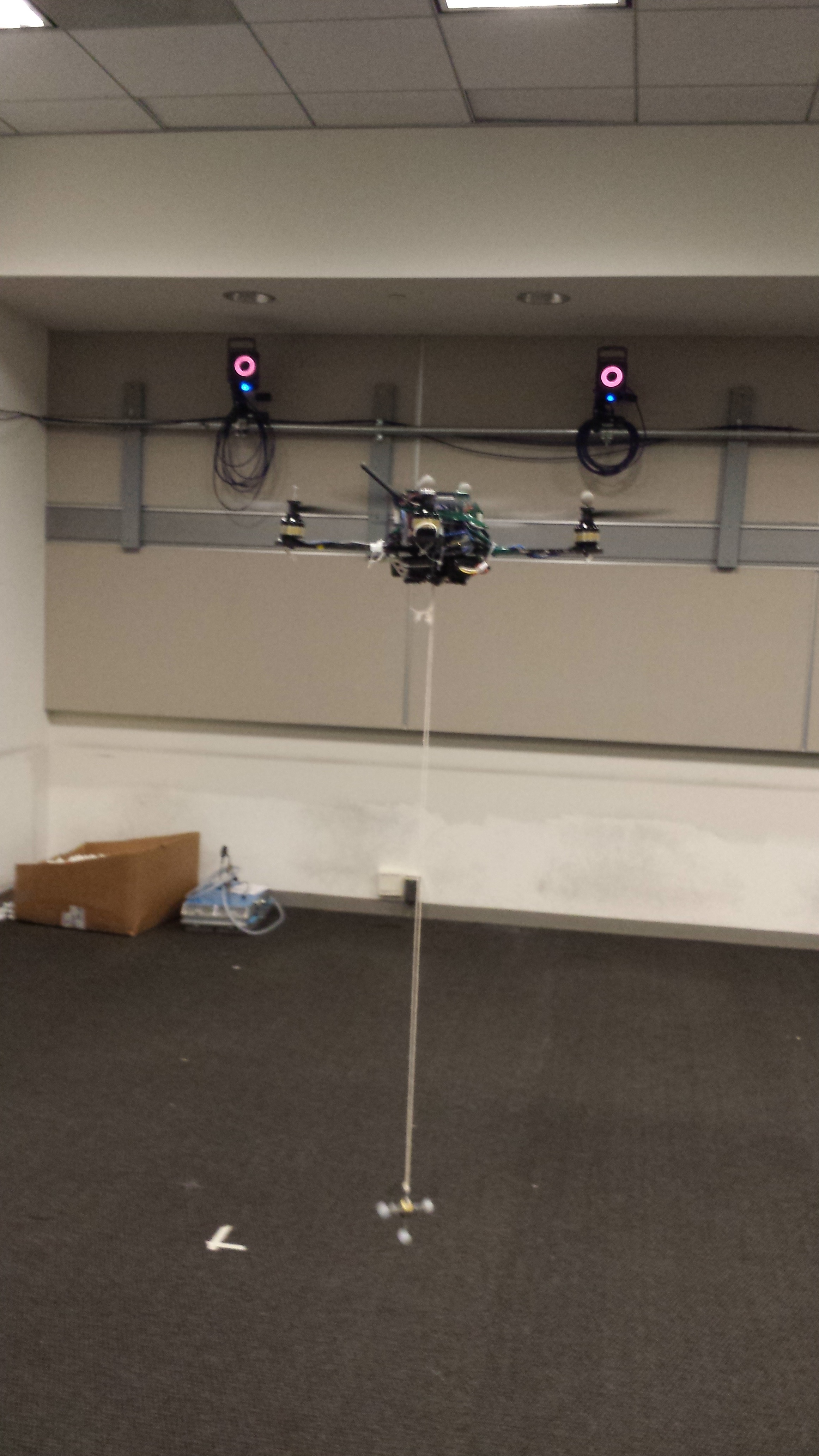}}
}
\caption{Hardware development for a quadrotor UAV}\label{fig:QuadHW}
\end{figure}

Two cases are considered and compared. For the first case, a position control system developed in~\cite{GooLeePECC13}, for quadrotor UAV that does not include the dynamics of the payload and the link, is applied to hover the quadrotor at the desired location, and the second case, the proposed control system is used.

\begin{figure}
\centerline{
\subfigure[Case I: quadrotor position control system~\cite{GooLeePECC13}]
{\includegraphics[width=1.2\columnwidth]{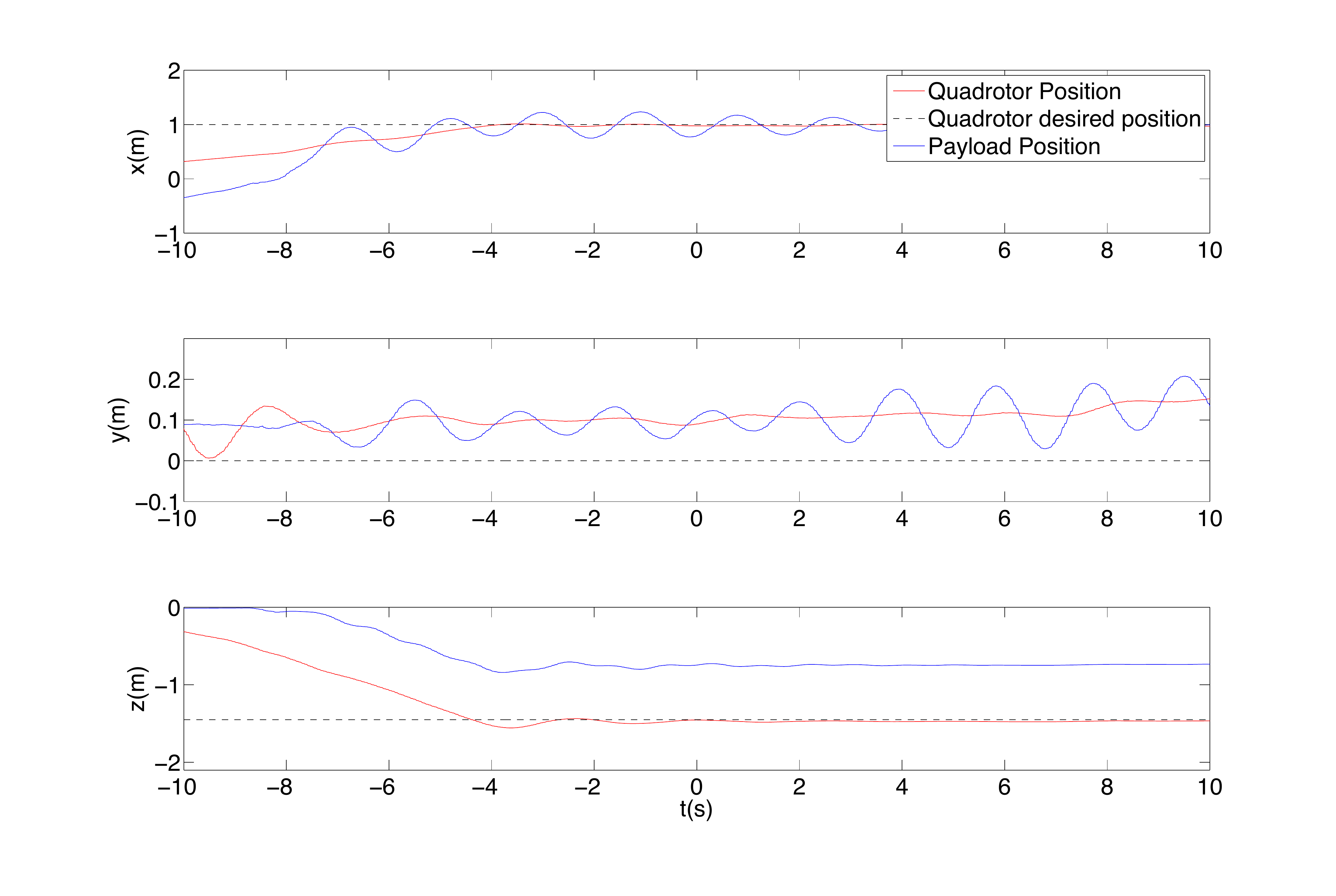}}}
\centerline{
\subfigure[Case II: proposed control system for quadrotor with suspended payload]
{\includegraphics[width=1.2\columnwidth]{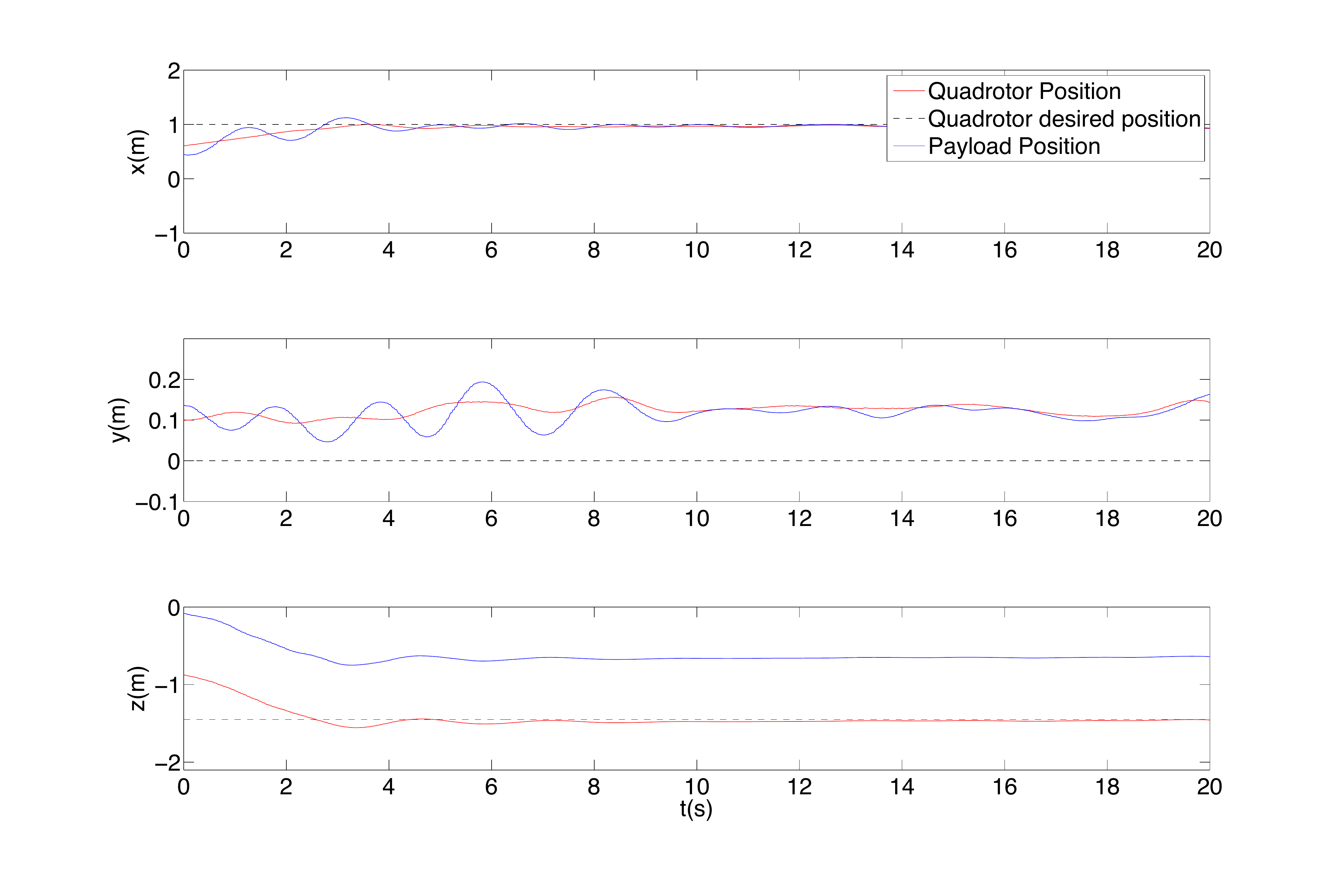}}}
\caption{Experimental results ($x_d$:black, $x$:red, $x+l_1q_1$:blue)}
\label{expresultsp}
\end{figure}

%Next, we use the proposed controller and release the payload from the same initial condition. 

Experimental results are shown at Figures \ref{expresultsp} and \ref{expresultsq}. The position of the quadrotor and the payload is compared with the desired position of the quadrotor at Figure~\ref{expresultsp}, and the deflection angle of the link from the vertical direction are illustrated at Figure~\ref{expresultsq}. It is shown that the proposed control system reduces the undesired oscillation of the link effectively, compared with the quadrotor position control system.\footnote{A short video file of the experiments is also available at the experiment section of \url{http://fdcl.seas.gwu.edu}.}

\begin{figure}
\subfigure[Case I: quadrotor position control system~\cite{GooLeePECC13}]
{\includegraphics[width=0.49\columnwidth]{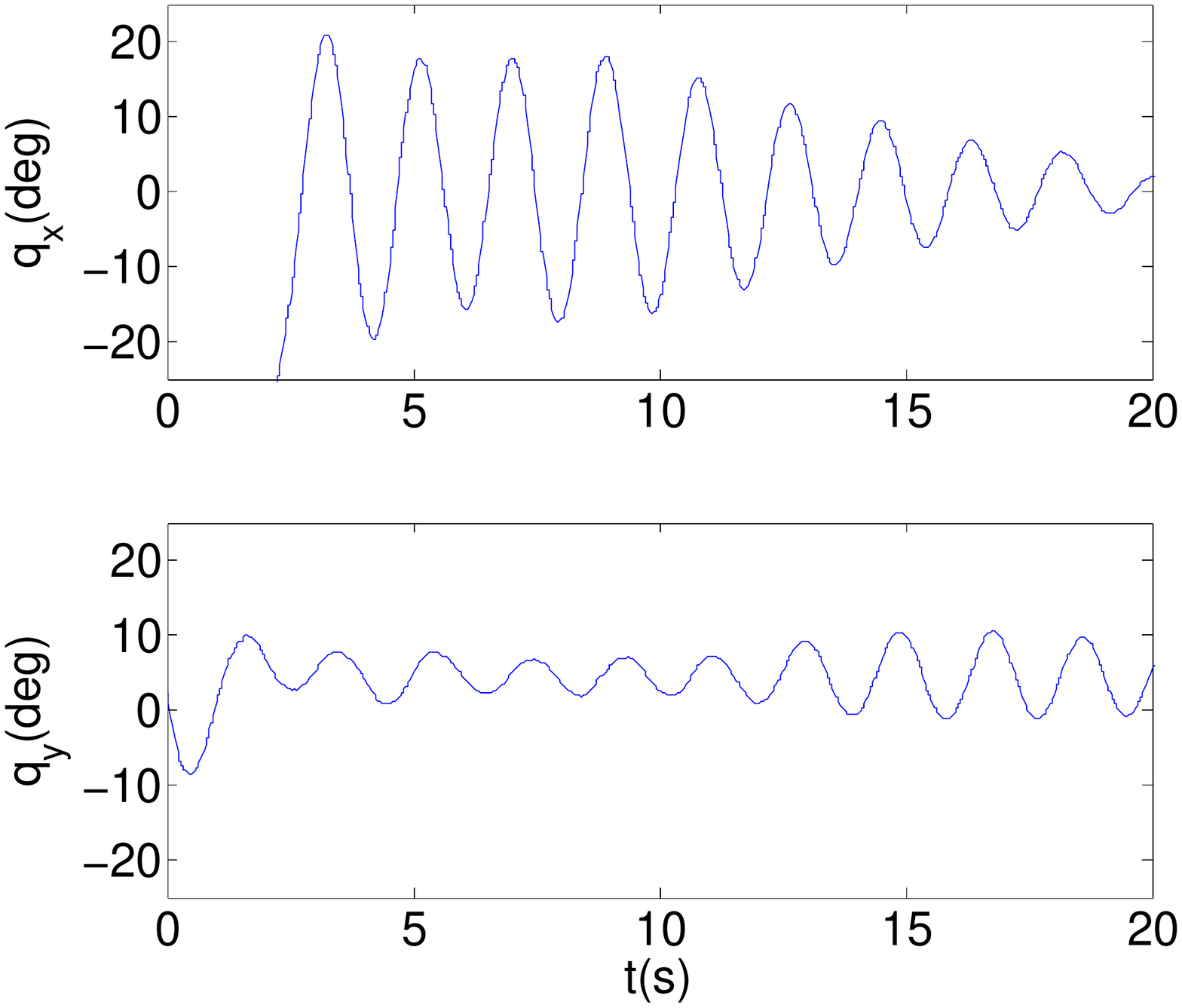}}
\subfigure[Case II: proposed control system for quadrotor with suspended payload]
{\includegraphics[width=0.49\columnwidth]{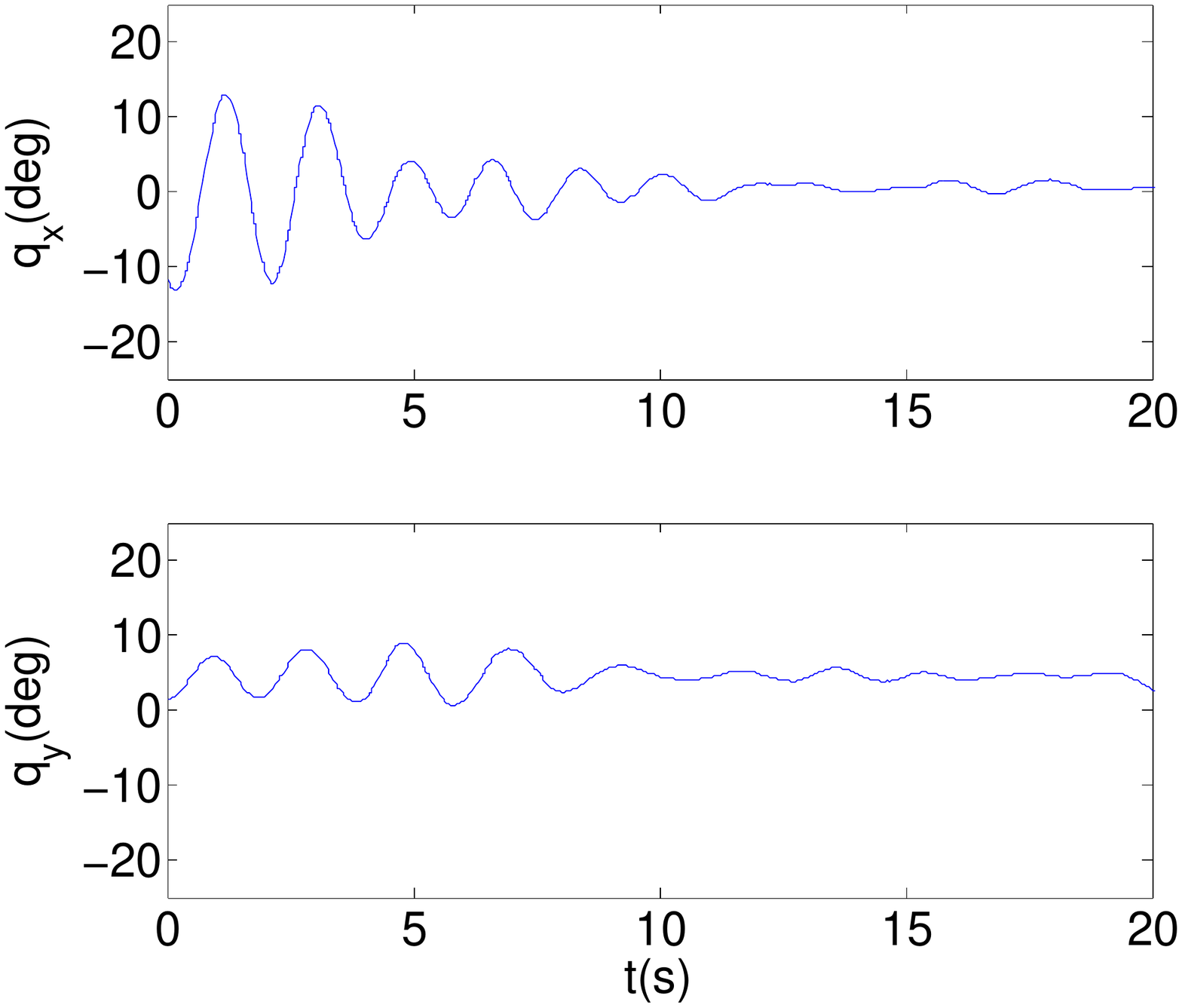}}
\caption{Experimental results: link deflection angles}
\label{expresultsq}
\end{figure}

\section{Conclusions}

Euler-Lagrange equations have been used for the quadrotor and the chain pendulum to model a flexible cable transporting a load in 3D space. These derivations developed in a remarkably compact form which allows us to choose arbitrary number and any configuration of the links. We developed a geometric nonlinear controller to stabilize the links below the quadrotor in the equilibrium position from any chosen initial condition. We expanded these derivations in such way that there is no need of using local angle coordinate and this advantageous technique signalize our derivations.

\appendix
\subsection{Proof for Proposition \ref{prop:FDM}}\label{sec:PfFDM}
From \refeqn{KE} and \refeqn{PE}, the Lagrangian is given by
\begin{align}
L&=\frac{1}{2}M_{00}\|\dot{x}\|^{2}+\dot{x}\cdot\sum^{n}_{i=1}{M_{0i}\dot{q}_i}+\frac{1}{2}\sum^{n}_{i,j=1}{M_{ij}\dot{q}_{i}\cdot\dot{q}_{j}}\nonumber \\
&\quad+\sum^{n}_{i=1}\sum^{n}_{a=i}m_{a}gl_{i}e_{3}\cdot q_{i}+M_{00}ge_{3}\cdot x+\frac{1}{2}\Omega^{T}J\Omega,
\end{align}
The derivatives of the Lagrangian are given by
\begin{align*}
\D_x L & = M_{00} g e_3,\\
\D_{\dot x} L & = M_{00} \dot x + \sum_{i=1}^n M_{0i}\dot q_i,
\end{align*}
where $\D_x L$ represents the derivative of $L$ with respect to $x$. From the variation of the angular velocity given after \refeqn{delR}, we have
\begin{align}
\D_{\Omega}L\cdot\delta\Omega=J\Omega\cdot(\dot\eta+\Omega\times\eta)
=J\Omega\cdot\dot\eta- \eta\cdot(\Omega\times J\Omega).
\end{align}
Similarly from \refeqn{delqi}, the derivative of the Lagrangian with respect to $q_i$ is given by
\begin{align*}
\D_{q_i} L \cdot \delta q_i = \sum_{a=i}^n m_a gl_ie_3\cdot (\xi_i\times q_i) 
= -\sum_{a=i}^n m_a gl_i\hat e_3 q_i\cdot \xi_i.
\end{align*}
The variation of $\dot q_i$ is given by
\begin{align*}
\delta\dot q_i = \dot \xi_i\times q_i + \xi_i\times q_i.
\end{align*}
Using this, the derivative of the Lagrangian with respect to $\dot q_i$ is given by
\begin{align*}
&\D_{\dot q_i}L\cdot \delta \dot q_i  = (M_{i0}\dot x + \sum_{j=1}^n M_{ij}\dot q_j) \cdot \delta \dot q_i \\
& = (M_{i0}\dot x + \sum_{j=1}^n M_{ij}\dot q_j) \cdot (\dot \xi_i \times q + \xi_i \times \dot q_i)\\
& = \hat q_i (M_{i0}\dot x + \sum_{j=1}^n M_{ij}\dot q_j)\cdot \dot\xi_i +
\hat{\dot q}_i (M_{i0}\dot x + \sum_{j=1}^n M_{ij}\dot q_j)\cdot \xi_i.
\end{align*}
Let $\mathfrak{G}$ be the action integral, i.e., $\mathfrak{G}=\int_{t_0}^{t_f} L\,dt$. From the above expressions for the derivatives of the Lagrangian, the variation of the action integral can be written as
\begin{align*}
\delta \mathfrak{G} =& \int_{t_0}^{t_f} \{M_{00} \dot x + \sum_{i=1}^n M_{0i}\dot q_i\}\cdot \delta \dot x
+M_{00}ge_3\cdot\delta x\\
& +\sum_{i=1}^n \{\hat q_i (M_{i0}\dot x + \sum_{j=1}^n M_{ij}\dot q_j)\}\cdot \dot \xi_i\\
 &+ \sum_{i=1}^n\{\hat{\dot q}_i (M_{i0}\dot x + \sum_{j=1}^n M_{ij}\dot q_j)-\sum_{a=i}^n m_a gl_i\hat e_3 q_i \}\cdot \xi_i\\
 &+ J\Omega\cdot\dot\eta- \eta\cdot(\Omega\times J\Omega)\,dt.
\end{align*}
Integrating by parts and using the fact that variations at the end points vanish, this reduces to
\begin{align*}
\delta \mathfrak{G} =& \int_{t_0}^{t_f} \{M_{00}ge_3 -M_{00} \ddot x - \sum_{i=1}^n M_{0i}\ddot q_i\}\cdot \delta x \\
&+ \sum_{i=1}^n\{-\hat q_i (M_{i0}\ddot x + \sum_{j=1}^n M_{ij}\ddot q_j)-\sum_{a=i}^n m_a gl_i\hat e_3 q_i \}\cdot \xi_i\\
& - \eta\cdot(J\dot\Omega+\Omega\times J\Omega) \,dt.
\end{align*}
According to the Lagrange-d'Alembert principle, the variation of the action integral is equal to the negative of the virtual work done by the external force and moment, namely
\begin{align}\label{eqn:estef}
-\int_{t_0}^{t_f} (-fRe_3+\Delta_{x})\cdot\delta x + (M+\Delta_R)\cdot \eta\; dt,
\end{align}
%Therefore, using the following expression 
%\begin{align}
%\delta x_{i}=\delta x-\sum_{a=1}^{i}{l_{a}\hat{q}_{a}\xi_{a}},
%\end{align}
%and $a\cdot(b\times c)=c\cdot(a\times b)$, equation \refeqn{estef} becomes 
%\begin{align}\label{eqn:ggffdd}
%-\int_{t_0}^{t_f} &(-fRe_3+\Delta_{x}+\sum_{i=1}^{n}{\Delta_{i}})\cdot\delta x + (M+\Delta_R)\cdot \eta\nonumber\\
%&-\sum_{i=1}^{n}{\sum_{a=1}^{i}{l_{a}\hat{\Delta}_{i}{q}_{a}}}\cdot\xi_{a}\; dt.
%\end{align}
%The last term in the above expression can be written as
%\begin{align}
%&\sum_{i=1}^{n}{\sum_{a=1}^{i}{l_{a}\hat{\Delta}_{i}{q}_{a}}}\cdot\xi_{a}\\
%&=\xi_{1}\cdot(l_{1}\hat{\Delta}_{1}q_{1}+l_{1}\hat{\Delta}_{2}q_{1}+l_{1}\hat{\Delta}_{3}q_{1}+\cdots)\\
%&+\xi_{2}\cdot(l_{2}\hat{\Delta}_{2}q_{2}+l_{2}\hat{\Delta}_{3}q_{2}+l_{2}\hat{\Delta}_{4}q_{2}+\cdots)+\cdots,
%\end{align}
%or equivalently 
%\begin{align}
%\sum_{i=1}^{n}{\sum_{a=1}^{i}{l_{a}\hat{\Delta}_{i}{q}_{a}}}\cdot\xi_{a}=-\sum_{i=1}^{n}\sum_{a=i}^{n}{l_{i}\hat{q}_{i}\Delta_{a}}\cdot\xi_{i},
%\end{align}
%so \refeqn{estef} can be simplified as
%\begin{align}
%-\int_{t_0}^{t_f} &(-fRe_3+\Delta_{x}+\sum_{i=1}^{n}{\Delta_{i}})\cdot\delta x + (M+\Delta_R)\cdot \eta\nonumber\\
%&+\sum_{i=1}^{n}\sum_{a=i}^{n}{l_{i}\hat{q}_{i}\Delta_{a}}\cdot\xi_{i}\; dt,
%\end{align}
and we obtain \refeqn{xddot} and \refeqn{Wdot}. As $\xi_i$ is perpendicular to $q_i$, we also have
\begin{gather}
-\hat q_i^2 (M_{i0}\ddot x + \sum_{j=1}^n M_{ij}\ddot q_j)+\sum_{a=i}^n m_a gl_i\hat q_i^2 e_3=0.\label{eqn:qddot0}
\end{gather}
Equation \refeqn{qddot0} is rewritten to obtain an explicit expression for $\ddot q_i$. As $q_i\cdot \dot q_i =0$, we have $\dot q_i \cdot \dot q_i +q_i\cdot \ddot q_i=0$. Using this, we have
\begin{align*}
-\hat q_i^2 \ddot q_i = -(q_i\cdot \ddot q_i )q_i + (q_i\cdot q_i)\ddot q_i =(\dot q_i \cdot \dot q_i) q_i + \ddot q_i.
\end{align*}
Substituting this equation into \refeqn{qddot0}, we obtain \refeqn{qddot}. This can be slightly rewritten in terms of the angular velocities. Since $\dot q_i = \omega_i\times q_i$ for the angular velocity $\omega_i$ satisfying $q_i\cdot\omega_i=0$, we have
\begin{align*}
    \ddot q_i & = \dot \omega_i \times q_i + \omega_i\times (\omega_i\times q_i) \\
    &= \dot \omega_i \times q_i - \|\omega_i\|^2q_i=-\hat q_i \dot\omega_i - \|\omega_i\|^2q_i.
\end{align*}
Using this and the fact that $\dot\omega_i\cdot q_i=0$, we obtain \refeqn{ELwm}.

%%%%%%%%%%%%%%%%%%%%%%%%%%%%%%%%%%%%%%%%%%
\subsection{Proof for Proposition \ref{prop:FDMM}}\label{sec:PfFDMM}

The variations of $x,u$ and $q$ are given by \refeqn{delxLin} and \refeqn{delqLin}. From the kinematics equation $\dot q_i=\omega_i\times q_i$, $\delta\dot q_i$ is given by
\begin{align*}
\delta \dot q_i = \dot\xi_i \times e_3 =\delta\omega_i \times e_3 + 0\times (\xi_i\times e_3)=\delta\omega_i \times e_3.
\end{align*}
Since both sides of the above equation is perpendicular to $e_3$, this is equivalent to $e_3\times(\dot\xi_i\times e_3) = e_3\times(\delta\omega_i\times e_3)$, which yields
\begin{gather*}
\dot \xi - (e_3\cdot\dot\xi) e_3 = \delta\omega_i -(e_3\cdot\delta\omega_i)e_3.
\end{gather*}
Since $\xi_i\cdot e_3 =0$, we have $\dot\xi\cdot e_3=0$. As $e_3\cdot\delta\omega_i=0$ from the constraint, we obtain the linearized equation for the kinematics equation:
\begin{align}
\dot\xi_i = \delta\omega_i.\label{eqn:dotxii}
\end{align}
Substituting these into \refeqn{ELwm}, and ignoring the higher order terms, we obtain \refeqn{Lin}. See Appendix 4 for details.
%%%%%%%%%%%%%%%%%%%%%%%%%%%%%%%%%%%%%%%%%%%%%%%%%%%
\subsection{Proof for Proposition \ref{prop:SA1}}\label{sec:stability1}
We first show stability of the rotational dynamics, and later it is combined with the stability analysis of the translational dynamics of quad rotor and the rotational dynamics of links. 
\subsubsection{a) Attitude Error Dynamics}
Here, attitude error dynamics for $e_{R}$, $e_{\Omega}$ are derived and we find conditions on control parameters to guarantee the boundedness of attitude tracking errors. The time-derivative of $Je_{\Omega}$ can be written as
\begin{align}
J\dot e_\Omega & = \{Je_\Omega + d\}^\wedge e_\Omega - k_R e_R-k_\Omega e_\Omega- k_I e_I + \Delta_R,\label{eqn:JeWdot}
\end{align}
where $d=(2J-\trs{J}I)R^TR_d\Omega_d\in\Re^3$~\cite{TFJCHTLeeHG}. The important property is that the first term of the right hand side is normal to $e_\Omega$, and it simplifies the subsequent Lyapunov analysis.

\subsubsection{b) Stability for Attutide Dynamics}
Define a configuration error function on $\SO$ as follows
\begin{align}
\Psi= \frac{1}{2}\trs[I- R_c^T R].
\end{align}
We introduce the following Lyapunov function
\begin{align}
\begin{split}
\mathcal{V}_{2}=&\frac{1}{2}e_{\Omega}\cdot J\dot{e}_{\Omega}+k_{R}\Psi(R,R_{d})+c_{2}e_{R}\cdot e_{\Omega}\\
&+\frac{1}{2}k_{I}\|e_{I}-\frac{\Delta_R}{k_{I}}\|^{2}.
\end{split}
\end{align}
Consider a domain $D_{2}$ given by
\begin{align}
D_2 = \{ (R,\Omega)\in \SO\times\Re^3\,|\, \Psi(R,R_d)<\psi_2<2\}.\label{eqn:D2}
\end{align}
In this domain we can show that $\mathcal{V}_{2}$ is bounded as follows~\cite{TFJCHTLeeHG}
\begin{align}\label{eqn:ffff1}
\begin{split}
z_{2}^{T}M_{21}z_{2}&+\frac{k_{I}}{2}\|e_{I}-\frac{\Delta_R}{k_{I}}\|^{2}\leq\mathcal{V}_{2}\\
&\leq z_{2}^{T}M_{22}z_{2}+\frac{k_{I}}{2}\|e_{I}-\frac{\Delta_R}{k_{I}}\|^{2},
\end{split}
\end{align}
where $z_{2}=[\|e_{R}\|,\|e_{\Omega}\|]^{T}\in \Re^{2}$ and the matrices $M_{21}$, $M_{22}$ are given by
\begin{align}
M_{21}=\frac{1}{2}\begin{bmatrix}
k_{R}&-c_{2}\lambda_{M}\\
-c_{2}\lambda_{M}&\lambda_{m}
\end{bmatrix},M_{22}=\frac{1}{2}\begin{bmatrix}
\frac{2k_{R}}{2-\psi_{2}}&c_{2}\lambda_{M}\\
c_{2}\lambda_{M}&\lambda_{M}
\end{bmatrix},
\end{align}
The time derivative of $\mathcal{V}_2$ along the solution of the controlled system is given by
\begin{align*}
\dot{\mathcal{V}}_2  =&
-k_\Omega\|e_\Omega\|^2 - e_\Omega\cdot(k_Ie_I-\Delta_R) \\
&+ c_2 \dot e_R \cdot Je_\Omega+ c_2 e_R \cdot J\dot e_\Omega + (k_Ie_I- \Delta_R) \dot e_I.
\end{align*}
We have $\dot e_I = c_2 e_R + e_\Omega$ from \refeqn{integralterm}. Substituting this and \refeqn{JeWdot}, the above equation becomes
\begin{align*}
\dot{\mathcal{V}}_2 =&
-k_\Omega\|e_\Omega\|^2  + c_2 \dot e_R \cdot Je_\Omega-c _2 k_R \|e_R\|^2 \\
&+ c_2 e_R \cdot ((Je_\Omega+d)^\wedge e_\Omega -k_\Omega e_\Omega).
\end{align*}
We have $\|e_R\|\leq 1$, $\|\dot e_R\|\leq \|e_\Omega\|$~\cite{TFJCHTLeeHG}, and choose a constant $B_{2}$ such that $\|d\|\leq B_2$. Then we have
\begin{align}
\dot{\mathcal{V}}_2 \leq - z_2^T W_2 z_2,\label{eqn:dotV2}
\end{align}
where the matrix $W_2\in\Re^{2\times 2}$ is given by
\begin{align*}
W_2 = \begin{bmatrix} c_2k_R & -\frac{c_2}{2}(k_\Omega+B_2) \\ 
-\frac{c_2}{2}(k_\Omega+B_2) & k_\Omega-2c_2\lambda_M \end{bmatrix}.%\label{eqn:W2}
\end{align*}
The matrix $W_{2}$ is a positive definite matrix if 
\begin{align}\label{eqn:c2}
c_{2}<\min\{\frac{\sqrt{k_{R}\lambda_{m}}}{\lambda_{M}},\frac{4k_{\Omega}}{8k_{R}\lambda_{M}+(k_{\Omega}+B_{2})^{2}} \}.
\end{align}
This implies that
\begin{align}\label{eqn:eq2}
\dot{\mathcal{V}}_{2}\leq - \lambda_{m}(W_{2})\|z_{2}\|^{2},
\end{align} 
which shows stability of attitude dynamics.
%%%%%%%%%%%%%%%%%%%%%%%%%%%%%%%%%%%%%%%%
\subsubsection{c) Translational Error Dynamics}
We derive the tracking error dynamics and a Lyapunov function for the translational dynamics of a quadrotor UAV and the dynamics of links. Later it is combined with the stability analyses of the rotational dynamics. This proof is based on the Lyapunov method presented in Theorem 3.6 and 3.7~\cite{Kha96}. From \refeqn{delxLin}, \refeqn{xddot}, \refeqn{Lin}, and \refeqn{fi}, the linearized equation of motion for the controlled full dynamic model is given by
\begin{align}\label{eqn:salam}
\Mb\ddot \xb  + \Gb\xb=\Bb(-fRe_{3}-M_{00}ge_{3})+\g(\xb,\dot{\xb})+\Bb\Delta_{x},
\end{align}
%where $\Delta\in\Re^{2n+3}$ is
%\begin{align}
%\Delta&=\begin{bmatrix}
%\Delta_{x}+\sum_{i=1}^{n}\Delta_{i}\\
%\sum_{a=1}^{n}{l_{1}C^{T}\Delta_{a}}\\
%\sum_{a=2}^{n}{l_{2}C^{T}\Delta_{a}}\\
%\vdots\\
%{l_{n}C^{T}\Delta_{n}}
%\end{bmatrix},
%\end{align}
%\begin{align}
%\Delta=[\Delta_{x},\; {l_{1}C^{T}\Delta_{1}},\; \sum_{a=1}^{2}{l_{a}C^{T}\Delta_{2}},\; \cdots,\; \sum_{a=1}^{n}{l_{a}C^{T}\Delta_{n}}]^{T},
%\end{align}
and $\g(\xb,\dot{\xb})$ is higher order term. The subsequent analyses are developed in the domain $D_{1}$
\begin{align}
D_1=\{&(\xb,\dot{\xb},R,e_\Omega)\in\Re^{2n+3}\times\Re^{2n+3}\times \SO\times\Re^3\,|\,\nonumber\\
& \Psi< \psi_1 < 1\}.\label{eqn:D}
\end{align}
In the domain $D_{1}$, we can show that 
\begin{align}
\frac{1}{2} \norm{e_R}^2 \leq  \Psi(R,R_c) \leq \frac{1}{2-\psi_1} \norm{e_R}^2\label{eqn:eRPsi1}.
\end{align}
Consider the quantity $e_{3}^{T}R_{c}^{T}Re_{3}$, which represents the cosine of the angle between $b_{3}=Re_{3}$ and $b_{3_{c}}=R_{c}e_{3}$. Since $1-\Psi(R,R_{c})$ represents the cosine of the eigen-axis rotation angle between $R_{c}$ and $R$, we have $e_{3}^{T}R_{c}^{T}Re_{3}\geq 1-\Psi(R,R_{c})>0$ in $D_{1}$. Therefore, the quantity $\frac{1}{e_{3}^{T}R_{c}^{T}Re_{3}}$ is well-defined. We add and subtract $\frac{f}{e_{3}^{T}R_{c}^{T}Re_{3}}R_{c}e_{3}$ to the right hand side of \refeqn{salam} to obtain
\begin{align}\label{eqn:taghall}
\Mb\ddot \xb  + \Gb\xb=&\Bb(\frac{-f}{e_{3}^{T}R_{c}^{T}Re_{3}}R_{c}e_{3}-X-M_{00}ge_{3}+\Delta_{x})+\g(\xb,\dot{\xb}),
\end{align}
where $X\in \Re^{3}$ is defined by
\begin{align}\label{eqn:Xdef}
X=\frac{f}{e_{3}^{T}R_{c}^{T}Re_{3}}((e_{3}^{T}R_{c}^{T}Re_{3})Re_{3}-R_{c}e_{3}).
\end{align}
The first term on the right hand side of \refeqn{taghall} can be written as 
\begin{align}
-\frac{f}{e_{3}^{T}R_{c}^{T}Re_{3}}R_{c}e_{3}=-\frac{(\|A\|R_{c}e_{3})\cdot Re_{3}}{e_{3}^{T}R_{c}^{T}Re_{3}}\cdot -\frac{A}{\|A\|}=A.
\end{align}
Substituting this and \refeqn{A} into \refeqn{taghall}
\begin{align}
\Mb\ddot \xb  + \Gb\xb=\Bb(-K_{x}\xb-K_{\dot{x}}\dot{\xb}-K_{z}\sat_{\sigma}(e_{\xb})-X+\Delta_{x})+\g(\xb,\dot{\xb}),
\end{align}
This can be rearranged as
\begin{align}
\begin{split}
\ddot{\xb}=&-(\Mb^{-1}\Gb+\Mb^{-1}\Bb K_{x})\xb-(\Mb^{-1}\Bb K_{\dot{x}})\dot{\xb}\\
&-\Mb^{-1}\Bb X-\Mb^{-1}\Bb K_{z}\sat_{\sigma}(e_{\xb})+\Mb^{-1}\g(\xb,\dot{\xb})+\Mb^{-1}\Bb\Delta_{x}.
\end{split}
\end{align}
Using the definitions for $\mathds{A}$, $\mathds{B}$, and $z_{1}$ presented before, the above expression can be rearranged as
%\begin{align}
%\begin{split}
%\begin{bmatrix}
%\dot{\xb}\\
%\ddot{\xb}
%\end{bmatrix}&=\begin{bmatrix}
%0&I\\
%-(\Mb^{-1}\Gb+\Mb^{-1}\Bb K_{x})&-(\Mb^{-1}\Bb K_{\dot{x}})
%\end{bmatrix}\begin{bmatrix}
%\xb\\
%\dot{\xb}
%\end{bmatrix}\\
%&+\begin{bmatrix}
%0\\
%I
%\end{bmatrix}(-\Mb^{-1}\Bb X-\Mb^{-1}\Bb K_{i}\sat(e_{\xb})+\g(\xb,\dot{\xb})+\Mb^{-1}\Bb\Delta_x).
%\end{split}
%\end{align}
%Considering $z_{1}=[\xb,\; \dot{\xb}]^{T}$ and replacing the first term on the right hand side of the above equation by $\Ae\in\Re^{4n+6\times4n+6}$
\begin{align}\label{eqn:zdot1}
\dot{z}_{1}=&\mathds{A} z_{1}+\mathds{B}(-\Bb X+\g(\xb,\dot{\xb})-\Bb K_{z}\sat_{\sigma}(e_{\xb})+\Bb\Delta_{x}).
\end{align}
%where the matrices $\mathds{A}\in\Re^{4n+6\times 4n+6}$ and $\mathds{B}\in\Re^{4n+6\times 2n+3}$ are defined as
%\begin{align}
%\mathds{A}=\begin{bmatrix}
%0&I\\
%-\Mb^{-1}(\Gb+\Bb K_{x})&-{(\Mb^{-1}\Bb K_{\dot{x}})}
%\end{bmatrix}, \mathds{B}=\begin{bmatrix}
%0\\
%\Mb^{-1}
%\end{bmatrix}
%\end{align}
\subsubsection{d) Lyapunov Candidate for Translation Dynamics}
From the linearized control system developed at section 3, we use matrix $P$ to introduce the following Lyapunov candidate for translational dynamics
\begin{align}
\mathcal{V}_{1}=z_{1}^{T}Pz_{1}+2\int_{p_{eq}}^{e_{\xb}}{(\Bb K_{z}\satr_{\sigma}(\mu)-\Bb\Delta_{x})}\cdot d \mu.
\end{align}
%Using \refeqn{disturbancecond} and $\delta< \Bb K_{z}\sigma$ we have
%\begin{align}
%\|\Delta\|\leq\delta<\Bb K_{z}\sigma,
%\end{align}
The last integral term of the above equation is positive definite about the equilibrium point $e_{\xb}=p_{eq}$ where
\begin{align}
p_{eq}=[\frac{\Delta_{x}}{k_{z}},0,0,\cdots],
\end{align}
if 
\begin{align}
\delta< k_z\sigma,
\end{align}
considering the fact that $\sat_{\sigma}{y}=y$ if $y<\sigma$.
%\begin{align}
%\frac{\Delta}{K_{z}}=[\frac{\Delta_{x}}{k_{z_{\dot{x}}}},\frac{\Delta_{1}}{k_{z_{1}}},\cdots,\frac{\Delta_{n}}{k_{z_{n}}}]^{T}.
%\end{align}
The time derivative of the Lyapunov function using the Leibniz integral rule is given by
\begin{align}\label{eqn:devrr}
\dot{\mathcal{V}_{1}}=\dot{z}_{1}^{T}Pz_{1}+z_{1}^{T}P\dot{z}_{1}+2\dot{e}_{\xb}\cdot(\Bb K_{z}\satr_{\sigma}(e_{\xb})-\Bb\Delta_{x}).
\end{align}
Since $\dot{e}_{\xb}^{T}=((P\mathds{B})^{T}z_{1})^{T}=z_{1}^{T}P\mathds{B}$ from \refeqn{exterm}, the above expression can be written as
\begin{align}\label{eqn:devvcv}
\dot{\mathcal{V}_{1}}=\dot{z}_{1}^{T}Pz_{1}+z_{1}^{T}P\dot{z}_{1}+2z_{1}^{T}P\mathds{B}(\Bb K_{z}\satr_{\sigma}(e_{\xb})-\Bb\Delta_{x}).
\end{align}
Substituting \refeqn{zdot1} into \refeqn{devvcv}, it reduces to
\begin{align}\label{eqn:beforsimp}
\dot{\mathcal{V}}_{1}=z_{1}^{T}(\mathds{A}^{T}P+P\mathds{A})z_{1}+2z_{1}^{T}P\mathds{B}(-\Bb X+\g(\xb,\dot{\xb})).
\end{align}
%then
%\begin{align}\label{eqn:lya1dot}
%\dot{\mathcal{V}}_{1}=-z_{1}^{T}Qz_{1}+2z_{1}^{T}PD_{R}+2z_{1}^{T}P\g(\xb,\dot{\xb}).
%\end{align}
%For every norm on $\Re^{n}$, and for every matrix $D$ there is a real constant $C_{D}>0$ that $\|Dx\|\leq C_{D}\|x\|$. Therefore, we can show that
%\begin{align}
%\|-M^{-1}BX\|&\leq C_{X}\|X\|,
%\end{align}
Let $c_{3}=2\|P\mathds{B}\Bb\|_{2}\in\Re$ and using $\mathds{A}^{T}P+P\mathds{A}=-Q$, we have
\begin{align}\label{eqn:test}
\dot{\mathcal{V}}_{1}\leq-z_{1}^{T}Qz_{1}+c_{3}\|z_{1}\|\|X\|+2z_{1}^{T}P\mathds{B}g(\xb,\dot{\xb}).
\end{align}
The second term on the right hand side of the above equation corresponds to the effects of the attitude tracking error on the translational dynamics. We find a bound of $X$, defined at \refeqn{Xdef}, to show stability of the coupled translational dynamics and rotational dynamics in the subsequent Lyapunov analysis. Since 
\begin{align}
f=\|A\|(e_{3}^{T}R_{c}^{T}Re_{3}), 
\end{align}
we have
\begin{align}\label{eqn:ssstr}
\|X\|\leq\|A\|\|(e_{3}^{T}R_{c}^{T}Re_{3})Re_{3}-R_{c}e_{3}\|.
\end{align}
The last term $\|(e_{3}^{T}R_{c}^{T}Re_{3})Re_{3}-R_{c}e_{3}\|$ represents the sine of the angle between $b_{3}=Re_{3}$ and $b_{3_{c}}=R_{c}e_{3}$, since $(b_{3_{c}}\cdot b_{3})b_{3}-b_{3_{c}}=b_{3}\times(b_{3}\times b_{3_{c}})$. The magnitude of the attitude error vector, $\|e_{R}\|$ represents the sine of the eigen-axis rotation angle between $R_{c}$ and $R$. Therefore, $\|(e_{3}^{T}R_{c}^{T}Re_{3})Re_{3}-R_{c}e_{3}\|\leq\|e_{R}\|$ in $D_{1}$. It follows that
\begin{align}
\begin{split}
\|(e_{3}^{T}R_{c}^{T}Re_{3})Re_{3}-R_{c}e_{3}\|&\leq\|e_{R}\|=\sqrt{\Psi(2-\Psi)}\\
&\leq\{\sqrt{\psi_1(2-\psi_1)}\triangleq\alpha\}<1,
\end{split}
\end{align}
therefore
\begin{align}
\begin{split}
\|X\|&\leq \|A\|\|e_{R}\|\\
&\leq\|A\|\alpha.
\end{split}
\end{align}
We also use the following properties
\begin{align}
\lambda_{\min}(Q)\|z_{1}\|^{2}\leq z_{1}^{T}Qz_{1}.
\end{align}
Note that $\lambda_{\min}(Q)$ is real and positive since $Q$ is symmetric and positive definite. Then, we can simplify \refeqn{test} as given
\begin{align}\label{eqn:ssddss}
\dot{\mathcal{V}}_{1} \leq -\lambda_{\min}(Q) \|z_{1}\| ^{2}+c_{3} \|z_{1}\| \|A\|\|e_{R}\|+2z_{1}^{T}P\mathds{B}\g(\xb,\dot{\xb}).
\end{align}
We find an upper boundary for
\begin{align}\label{eqn:AA}
A=-K_{x} \xb-K_{\dot{x}}\dot\xb -K_{z}\satr_{\sigma}(e_{\xb})+ M_{00}ge_3,
\end{align}
by defining
\begin{align}
\|M_{00}ge_{3}\|\leq B_{1},
\end{align} 
for a given positive constant $B_{1}$. We use the following properties where for any matrix $A\in\Re^{m\times n}$ 
\begin{align}
\|A\|_{2}\leq \sqrt{mn}\|A\|_{\max},
\end{align}
where $\|A\|_{\max}=\max\{a_{mn}\}$. The third term on the right hand side of \refeqn{AA} can be bounded as
\begin{align}
\|-K_{z}\satr_{\sigma}(e_{\xb})\|\leq \|K_{z}\|\|\satr_{\sigma}(e_{\xb})\|\leq\|K_{z}\| \sqrt{2n+3}\sigma,
\end{align}
where
\begin{align*}
\|K_{z}\|\leq \sqrt{3(2n+3)}\|K_{z}\|_{\max},
\end{align*}
and similarly 
\begin{align*}
&\|K_{\xb}\|\leq \sqrt{3(2n+3)}\|K_{\xb}\|_{\max},\\
&\|K_{\dot{\xb}}\|\leq \sqrt{3(2n+3)}\|K_{\dot{\xb}}\|_{\max}.
\end{align*}
%and
%\begin{align*}
%&\|K_{z}\|_{\max}=\max\{k_{z},k_{z_{1}},k_{z_{2}},\cdots\},\\
%&\|K_{\xb}\|_{\max}=\max\{k_{x},k_{q_{1}},k_{q_{2}},\cdots\},\\
%&\|K_{\dot{\xb}}\|_{\max}=\max\{k_{\dot{x}},k_{\dot{q}_{1}},k_{\dot{q}_{2}},\cdots\}.\\
%\end{align*}
We define $K_{max}, K_{z_{m}}\in\Re$
\begin{align*}
&K_{\max}=\max\{\|K_{\xb}\|_{\max},\|K_{\dot{\xb}}\|_{\max}\}\sqrt{3(2n+3)}, \\
&K_{z_{m}}=\sqrt{3}(2n+3)\|K_{z}\|_{\max},
\end{align*}
and then the upper bound of $A$ is given by
\begin{align}
\|A\| & \leq K_{\max}(\|\xb\|+\|\dot{\xb}\|)+\sigma K_{z_{m}}+B_{1}\\
&\leq 2K_{\max}\|z_{1}\|+(B_{1}+\sigma K_{z_{m}}),\label{eqn:normA}
\end{align}
and substitute \refeqn{normA} into \refeqn{ssddss}
%\begin{align}
%\begin{split}
%\dot{\mathcal{V}}_{1} \leq& -\lambda_{\min}(Q) \|z_{1}\| ^{2}+c_{3} 2k_{\max}\|z_{1}\|^{2} \alpha\\
%&+c_{3}(B_{1}+\sqrt{3}K_{z}\sigma)\|z_{1}\|\|e_{R}\|+2z_{1}^{T}P\mathds{B}\g(\xb,\dot{\xb}),
%\end{split}
%\end{align}
%then
\begin{align}\label{eqn:eq1}
\begin{split}
\dot{\mathcal{V}}_{1} \leq& -(\lambda_{\min}(Q)-2c_{3}K_{\max}\alpha) \|z_{1}\| ^{2}\\
&+c_{3}(B_{1}+\sigma K_{z_{m}})\|z_{1}\|\|e_{R}\|+2z_{1}^{T}P\mathds{B}\g(\xb,\dot{\xb}).
\end{split}
\end{align}
%%%%%%%%%%%%%%%%%%%%%%%%%%%%%%%%%%%%%%%%
\subsubsection{e) Lyapunov Candidate for the Complete System}
Let $\mathcal{V}=\mathcal{V}_{1}+\mathcal{V}_{2}$ be the Lyapunov function for the complete system. The time derivative of $\mathcal{V}$ is given by
\begin{align}
\dot{\mathcal{V}}=\dot{\mathcal{V}}_{1}+\dot{\mathcal{V}}_{2}.
\end{align}
Substituting \refeqn{eq1} and \refeqn{eq2} into the above equation
\begin{align}
\begin{split}
\dot{\mathcal{V}}\leq& -(\lambda_{\min}(Q)-2c_{3}K_{\max}\alpha) \|z_{1}\| ^{2}+2z_{1}^{T}P\mathds{B}\g(\xb,\dot{\xb})\\
&+c_{3}(B_{1}+\sigma K_{z_{m}}) \|z_{1}\|\|e_{R}\|-\lambda_{m}(W_{2})\|z_{2}\|^{2},
\end{split}
\end{align}
and using $\|e_{R}\|\leq \|z_{2}\|$, it can be written as
\begin{align}\label{eqn:finalsimp}
\begin{split}
\dot{\mathcal{V}}\leq& -(\lambda_{\min}(Q)-2c_{3}K_{\max}\alpha) \|z_{1}\| ^{2}+2z_{1}^{T}P\mathds{B}\g(\xb,\dot{\xb})\\
&+c_{3}(B_{1}+\sigma K_{z_{m}})\|z_{1}\|\|z_{2}\|-\lambda_{m}(W_{2})\|z_{2}\|^{2}.
\end{split}
\end{align}
Also, the $2z_{1}^{T}P\mathds{B}\g(\xb,\dot{\xb})$ term in the above equation is indefinite. The function $\g(\xb,\dot{\xb})$ satisfies
\begin{align}
\frac{\|\g(\xb,\dot{\xb})\|}{\|z_{1}\|}\rightarrow 0\quad \mbox{as}\quad \|z_{1}\|\rightarrow 0
\end{align}
Then, for any $\gamma>0$ there exists $r>0$ such that
\begin{align}
\|\g(\xb,\dot{\xb})\|<\gamma\|z_{1}\|,\quad \forall\|z_{1}\|<r
\end{align}
so,
\begin{align}
2z_{1}^{T}P\mathds{B}\g(\xb,\dot{\xb})\leq 2\gamma\|P\|_{2}\|z_{1}\|^{2}.
\end{align}
Substituting the above equation into \refeqn{finalsimp}
\begin{align}
\begin{split}
\dot{\mathcal{V}}\leq &-(\lambda_{\min}(Q)-2c_{3}K_{\max}\alpha) \|z_{1}\| ^{2}+2\gamma\|P\|_{2}\|z_{1}\|^{2}\\
&+c_{3}(B_{1}+\sigma K_{z_{m}})\|z_{1}\|\|z_{2}\|-\lambda_{m}(W_{2})\|z_{2}\|^{2},
\end{split}
\end{align}
we obtain
\begin{align}
\dot{\mathcal{V}}\leq-z^{T}Wz+2\gamma\|P\|_{2}\|z_{1}\|^{2},
\end{align}
where $z=[z_{1},z_{2}]^{T}\in\Re^{2}$ and
\begin{align}
W=\begin{bmatrix}
\lambda_{\min}(Q)-2c_{3}K_{\max}\alpha&-\frac{c_{3}(B_{1}+\sigma K_{z_{m}})}{2}\\
-\frac{c_{3}(B_{1}+\sigma K_{z_{m}})}{2}&\lambda_{m}(W_{2})
\end{bmatrix}.
\end{align}
%Since $W$ is a positive definite matrix under the following condition
%\begin{align}
%k_{\max}C_{X}<\frac{\lambda_{\min}(Q)}{2(4n+6)\alpha\|P\|_{2}}
%\end{align}
%where $\alpha=\sqrt{\psi_{1}(2-\psi_{1})}$, then
%\begin{align}
%\dot{\mathcal{V}}\leq -\lambda_{\min}(W)\|z\|^{2}+2\gamma\|P\|_{2}\|z_{1}\|^{2},
%\end{align}
Also using $\|z_{1}\|\leq\|z\|$
\begin{align}
\dot{\mathcal{V}}\leq -(\lambda_{\min}(W)-2\gamma\|P\|_{2})\|z\|^{2}.
\end{align}
Choosing $\gamma<(\lambda_{\min}(W))/2\|P\|_{2}$, ensures that $\dot{\mathcal{V}}$ is negative semi-definite. This implies that the zero equilibrium of tracking errors is stable in the sense of Lyapunov and $\mathcal{V}$ is non-increasing. Therefore all of error variables $z_{1}$, $z_{2}$ and integral control terms $e_{I}$, $e_{\xb}$ are uniformly bounded. Also, from Lasalle-Yoshizawa theorem~\cite[Thm 3.4]{Kha96}, we have $z\rightarrow 0$ as $t\rightarrow \infty$.
%%%%%%%%%%%%%%%%%%%%%%%%%%%%%%%%%%%%%%%%%%%%%%%%%%
\subsection{Proof for the high order terms derivations}
We approximate $\xi\in\Re^{3}$ by $e_{3}\times q$ and other high order terms. The following relations are considerable as it is illustrated in the Fig. \ref{appendixqxi}. 
\begin{figure}[h]
\centering
\includegraphics[width=2.1in]{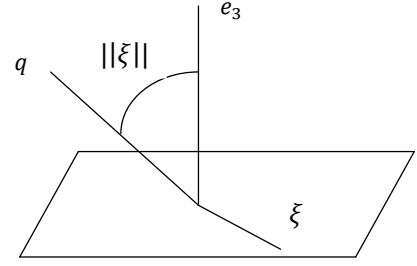}
\caption{$e_{3}$ and $q$, direction of each link}
\label{appendixqxi}
\end{figure}
\begin{align}
\sin\|\xi\|=\|q\times e_{3}\|,
\end{align}
and
\begin{align}
\frac{\xi}{\|\xi\|}=\frac{e_{3}\times q}{\|e_{3}\times q\|},
\end{align}
so
\begin{align}
\xi=\frac{e_{3}\times q}{\|e_{3}\times q\|}\sin^{-1}(\|e_{3}\times q\|).
\end{align}
Taking derivative and considering that derivative of $\|A\|$ is $\frac{A\cdot\dot{A}}{\|A\|}$
\begin{align}
\begin{split}
\dot{\xi}&=[\frac{e_{3}\times \dot{q}}{\|e_{3}\times q\|}-\frac{(e_{3}\times q)[(e_{3}\times q)\cdot(e_{3}\times \dot{q})]}{\|e_{3}\times q\|^3}]\times\\
&\sin^{-1}(\|e_{3}\times q\|)\\
&+\frac{e_{3}\times q}{\|e_{3}\times q\|}[\frac{(e_{3}\times q)\cdot(e_{3}\times\dot{q})}{\|e_{3}\times q\|}]\frac{1}{\sqrt{1-(\|e_{3}\times q\|)^2}},
\end{split}
\end{align}
after simplifying we would have
\begin{align}\label{eqn:derivative1}
\begin{split}
\dot{\xi}&=\frac{e_{3}\times\dot{q}}{\|e_{3}\times q\|}\sin^{-1}(\|e_{3}\times q\|)\\
&+(e_{3}\times q)[(e_{3}\times q)\cdot(e_{3}\times\dot{q})]k,
\end{split}
\end{align}
where $k\in\Re$ is scalar and defined as follow
\begin{align}
k=\frac{1}{\|e_{3}\times q\|^2\sqrt{(1-\|e_{3}\times q\|^2)}}-\frac{\sin^{-1}(\|e_{3}\times q\|)}{\|e_{3}\times q\|^3}.
\end{align}
The following relation is considerable 
\begin{align}
\dot{q}=\omega\times q,
\end{align}
where $\omega\in\Re^{3}$ is the angular velocity of each link. Using $a\times (b \times c)=b(a\cdot c)-c(a\cdot b)$
\begin{align}\label{eqn:q_dot}
e_{3}\times\dot{q}=e_{3}\times(q\times\omega)=\omega(e_{3}\cdot q)-q(e_{3}\cdot\omega),
\end{align}
and substituting Eq. \refeqn{q_dot} into Eq. \refeqn{derivative1}
\begin{align}\label{eqn:derivative22}
\begin{split}
\dot{\xi}&=\frac{\delta\omega(e_{3}\cdot q)}{\|e_{3}\times q\|}\sin^{-1}(\|e_{3}\times q\|)-\frac{q(e_{3}\cdot\delta\omega)}{\|e_{3}\times q\|}\sin^{-1}(\|e_{3}\times q\|)\\
&+[(e_{3}\times q)\cdot(\delta\omega(e_{3}\cdot q))](e_{3}\times q)k\\
&-[(e_{3}\times q)\cdot(q(e_{3}\cdot \delta\omega))](e_{3}\times q)k,
\end{split}
\end{align}
and using the fact that $a\cdot(a\times b)=0$ the last term on the right hand side vanishes, so
\begin{align}\label{eqn:derivative222}
\begin{split}
\dot{\xi}&=\frac{\delta\omega(e_{3}\cdot q)}{\|e_{3}\times q\|}\sin^{-1}(\|e_{3}\times q\|)\\
&+[(e_{3}\times q)\cdot(\delta\omega(e_{3}\cdot q))](e_{3}\times q)k\\
&-\frac{q(e_{3}\cdot\delta\omega)}{\|e_{3}\times q\|}\sin^{-1}(\|e_{3}\times q\|),
\end{split}
\end{align}
The first term on the right hand side of Eq. \refeqn{derivative222} can be simplified using Taylor series 
\begin{align}\label{eqn:tq}
q=\exp(\hat{\xi})e_{3}=(I+\hat{\xi}+\g(\xi))e_{3},
\end{align}
using the fact that $\hat{\xi}e_{3}\cdot e_{3}=0$ we can have
\begin{align}
q\cdot e_{3}=(e_{3}+\hat{\xi}e_{3}+\g(\xi))e_{3}=1+\g(\xi),
\end{align}
so considering Taylor series $\frac{\sin^{-1}(x)}{x}=1+\frac{1}{6}x^2+O(x^{n})$, the first term of the right hand side of Eq. \refeqn{derivative222} can be rewritten as
\begin{align}\label{eqn:tq}
q=\exp(\hat{\xi})e_{3}=(I+\hat{\xi}+\g(\xi))e_{3},
\end{align}
using the fact that $\hat{\xi}e_{3}\cdot e_{3}=0$ we can have
\begin{align}\label{eqn:e11}
q\cdot e_{3}=(e_{3}+\hat{\xi}e_{3}+\g(\xi))e_{3}=1+\g(\xi),
\end{align}
also
\begin{align}\label{eqn:e22}
q\times e_{3}=(e_{3}+\hat{\xi}e_{3}+\g(\xi))\times e_{3}=-\hat{e}_{3}^{2}\xi+\g(\xi),
\end{align}
and $\|A\|^{2}=A^{T}A$, so
\begin{align}\label{eqn:hote3q}
\begin{split}
\|e_{3}\times q\|^{2}&=(-\hat{e}_{3}^{2}\xi+\g(\xi))^{T}(-\hat{e}_{3}^{2}\xi+\g(\xi))\\
&=\|\xi\|^{2}+\g(\xi)=\g(\xi)
\end{split}
\end{align}
so 
\begin{align}\label{eqn:hqn}
\begin{split}
&\frac{(e_{3}\cdot q)}{\|e_{3}\times q\|}\sin^{-1}(\|e_{3}\times q\|)\\
&=(e_3\cdot q)(1+\frac{1}{6}(\|e_{3}\times q\|)^{2}+\cdots)\\
&=(1+\g(\xi))(1+\g(\xi))=1+\g(\xi).
\end{split}
\end{align}
The third term is simplified as follow using the fact that $\omega$ is normal to $q$ 
\begin{align}\label{eqn:fact}
\delta\omega\cdot q=0,
\end{align}
replacing $q=e_{3}+\hat{\xi}e_{3}+\g(\xi)$ into the above equation we would have
\begin{align}
(e_{3}+\hat{\xi}e_{3}+\g(\xi))\cdot \delta\omega=0,
\end{align}
where $g(\xi)$ is higher order terms
\begin{align}
e_{3}\cdot\delta\omega+\hat{\xi}e_{3}\cdot\delta\omega+\g(\xi)=0,
\end{align}
and
\begin{align}
e_{3}\cdot\delta\omega=-\hat{\xi}e_{3}\cdot\delta\omega+\g(\xi)=\g(\xi,\delta\omega)+\g(\xi)=\g(\xi,\delta\omega),
\end{align}
so, we can show that 
\begin{align}\label{eqn:set1}
\delta\omega\cdot e_{3}=\g(\xi,\delta\omega),
\end{align}
then, 
\begin{align}
\frac{q(e_{3}\cdot\delta\omega)}{\|e_{3}\times q\|}\sin^{-1}(\|e_{3}\times q\|)=\g(\xi,\delta\omega).
\end{align}
Finding the Taylor series of $k$
\begin{align}
\begin{split}
&\frac{1}{\|e_{3}\times q\|^2\sqrt{(1-\|e_{3}\times q\|^2)}}-\frac{\sin^{-1}(\|e_{3}\times q\|)}{\|e_{3}\times q\|^3}\\
&=\frac{1}{3}+\frac{3}{10}\|e_{3}\times q\|^2+\frac{15}{56}\|e_{3}\times q\|^4+\cdots,
\end{split}
\end{align}
and using \refeqn{hote3q}
\begin{align}
k=\frac{1}{3}+\frac{3}{10}\|e_{3}\times q\|^2+\frac{15}{56}\|e_{3}\times q\|^4+\cdots=\frac{1}{3}+\g(\xi).
\end{align} 
We use \refeqn{e22} to simplify the following term
\begin{align}\label{eqn:simple}
\begin{split}
(e_{3}\times q)\cdot\delta\omega&=(-\hat{e}_{3}^{2}\xi+g(\xi))\cdot\delta\omega\\
&=-\hat{e}_{3}^{2}\xi\cdot\delta\omega+\g(\xi)=\g(\xi,\delta\omega)+\g(\xi)\\
&=g(\xi,\delta\omega),
\end{split}
\end{align}
so, the second term on the right hand side of \refeqn{derivative222} becomes
\begin{align}
\begin{split}
&[(e_{3}\times q)\cdot(\delta\omega(e_{3}\cdot q))](e_{3}\times q)k\\
&=(e_3\cdot q)(e_{3}\times q)(\g(\xi,\delta\omega))(\frac{1}{3}+\g(\xi))=\g(\xi,\delta\omega).
\end{split}
\end{align}
Finally, we can approximate the $\dot{\xi}$ as given
\begin{align}
\begin{split}
\dot{\xi}&=\delta\omega(1+\g(\xi))+\g(\xi,\delta\omega)+\g(\xi,\delta\omega)\\
&=\delta\omega+\g(\xi,\delta\omega).
\end{split}
\end{align}
We can show that Eq. \refeqn{derivative222} can be expressed as presented 
\begin{align}
\dot{\xi}=\delta\omega(1+\g(\xi))+\g(\xi,\delta\omega)=\delta\omega+\g(\xi,\delta\omega).
\end{align}
Now, we start finding an expression for $\delta\dot{\omega}$ by taking derivative of the above expression
We take derivative of \refeqn{fact}
\begin{align}\label{eqn:set2}
\dot{q}\cdot\delta\omega+q\cdot\delta\dot{\omega}=0,
\end{align}
so
\begin{align}
\dot{q}\cdot(q\times\dot{q})+q\cdot\delta\dot{\omega}=0
\end{align}
and using $a\cdot{a\times b}=0$
\begin{align}
q\cdot\delta\dot{\omega}=0,
\end{align}
replacing $q=e_{3}+\hat{\xi}e_{3}+\g(\xi)$ into the above expression and simplifying
\begin{align}
e_{3}\cdot\delta\dot{\omega}=-\hat{\xi}e_{3}\cdot\delta\dot{\omega}+\g(\xi)=\g(\xi,\delta\omega),
\end{align}
so finally
\begin{align}\label{eqn:sss}
e_{3}\cdot\delta\dot{\omega}=\g(\xi,\delta\omega),
\end{align}
We take derivative of the $\dot{\xi}$ equation to find an expression for $\delta\dot{\omega}$. 
\begin{align}
\begin{split}
\ddot{\xi}&=\frac{\dot{q}(e_{3}\cdot\delta\omega)}{\|e_{3}\times q\|}\sin^{-1}(\|e_{3}\times q\|)\\
&+\frac{q(e_{3}\cdot\delta\dot{\omega})}{\|e_{3}\times q\|}\sin^{-1}(\|e_{3}\times q\|)+q(e_{3}\cdot\delta\omega)k\\
&+\frac{\delta\dot{\omega}(e_{3}\cdot q)}{\|e_{3}\times q\|}\sin^{-1}(\|e_{3}\times q\|)+\delta\omega(e_{3}\cdot\dot{q})\frac{\sin^{-1}(\|e_{3}\times q\|)}{\|e_{3}\times q\|}\\
&+\delta\omega(e_{3}\cdot q)[\frac{1}{\|e_{3}\times q\|\sqrt{1-(\|e_{3}\times q\|)^2}}\\
&-\frac{\sin^{-1}(\|e_{3}\times q\|)}{\|e_{3}\times q\|^2}]\frac{(e_{3}\times q)(e_{3}\times\dot{q})}{\|e_{3}\times q\|}\\
&+[(e_{3}\times\dot{q})\cdot\delta\omega](e_{3}\cdot q)(e_{3}\times q)k\\
&+[(e_{3}\times q)\cdot\delta\dot{\omega}](e_{3}\cdot q)(e_{3}\times q)k\\
&+[(e_{3}\times q)\cdot\delta\omega][(e_{3}\cdot \dot{q})(e_{3}\times q)k\\
&+(e_{3}\cdot q)(e_{3}\times \dot{q})k+(e_{3}\cdot q)(e_{3}\times q)\dot{k}],
\end{split}
\end{align}
The first line is higher order term based on the derivations at \refeqn{sss} and \refeqn{set1}. The last line is also higher order term based on the derivations at \refeqn{simple} and $\ddot{\xi}$ becomes
\begin{align}
\begin{split}
\ddot{\xi}&=\delta\dot{\omega}+\g(\xi,\delta\omega)\\
&+[(e_{3}\cdot q)(e_{3}\times q)k][\delta\omega(e_{3}\times \dot{q})\\
&+(e_{3}\times\dot{q})\cdot\delta\omega+(e_{3}\times q)\cdot\delta\dot{\omega}]\\
&+\delta\omega(e_{3}\cdot\dot{q})\frac{\sin^{-1}(\|e_{3}\times q\|)}{\|e_{3}\times q\|}.
\end{split}
\end{align}
We can show that the last line is higher order term as follow
\begin{align}
\begin{split}
e_{3}\cdot\dot{q}&=e_{3}\cdot(\delta\omega\times q)=-\delta\omega\cdot(e_{3}\times q)\\
&=\hat{e}_{3}^{2}\xi\cdot\delta\omega+\g(\xi)=\g(\xi,\delta\omega),
\end{split}
\end{align}
and
\begin{align}
\delta\omega\cdot(e_{3}\times\dot{q})=\delta\omega\cdot(\delta\omega(e_{3}\cdot q)-q(e_{3}\cdot\delta\omega))=\g(\xi,\delta\omega),
\end{align}
and
\begin{align}
\begin{split}
(e_{3}\times q)\cdot\delta\dot{\omega}&=(-\hat{e}_{3}^{2}\xi+\g(\xi))\cdot\delta\dot{\omega}\\
&=-\hat{e}_{3}^{2}\xi\cdot\delta\dot{\omega}+\g(\xi)=\g(\xi,\delta\omega),
\end{split}
\end{align}
so $\ddot{\xi}$
becomes
\begin{align}
\ddot{\xi}=\delta\dot{\omega}+\g(\xi,\delta\omega),
\end{align}
or
\begin{align}\label{eqn:sett}
\delta\dot{\omega}=\ddot{\xi}-\g(\xi,\delta\omega).
\end{align}

%% REFERENCES:
%% as is in the bibliography environment, the items should be listed with
\bibliography{IJCAS}
\bibliographystyle{IEEEtran}

%% BIOGRAPHY
%% the \biography command has three parameters:
%% (1) EPS image file name without extension,
%% (2) the author's name, and (3) the texts, respectively.
%% when there is no EPS image file, just fill the first parameter with
%% {nofile}.
\biography{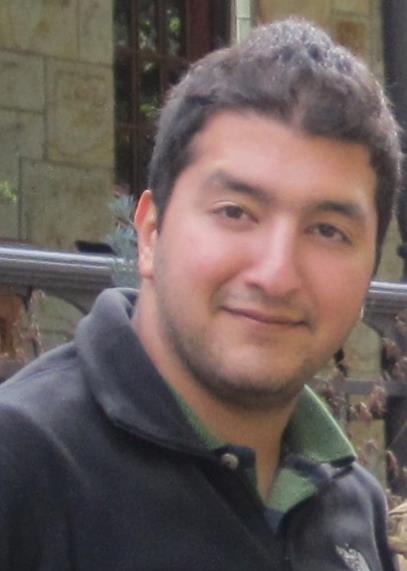}{Farhad A. Goodarzi}
	{received B.S. and M.S. degrees in Mechanical Engineering from Sharif University and Santa Clara University, CA in 2009 and 2011. Currently a Ph.D. candidate in ME department at The George Washington University. His research interests include control of complex systems and its application such as autonomous load transportation using multiple quadrotors.}
	\biography{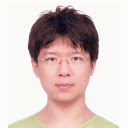}{Daewon Lee}
	{received the B.S., M.S. and Ph.D. degrees in Mechanical Engineering from Seoul National University. He is currently a Post doctoral fellow in mechanical and aerospace engineering department at The George Washington University. His research interests include control theory and its application to control of the quadrotor UAV's.}
	\biography{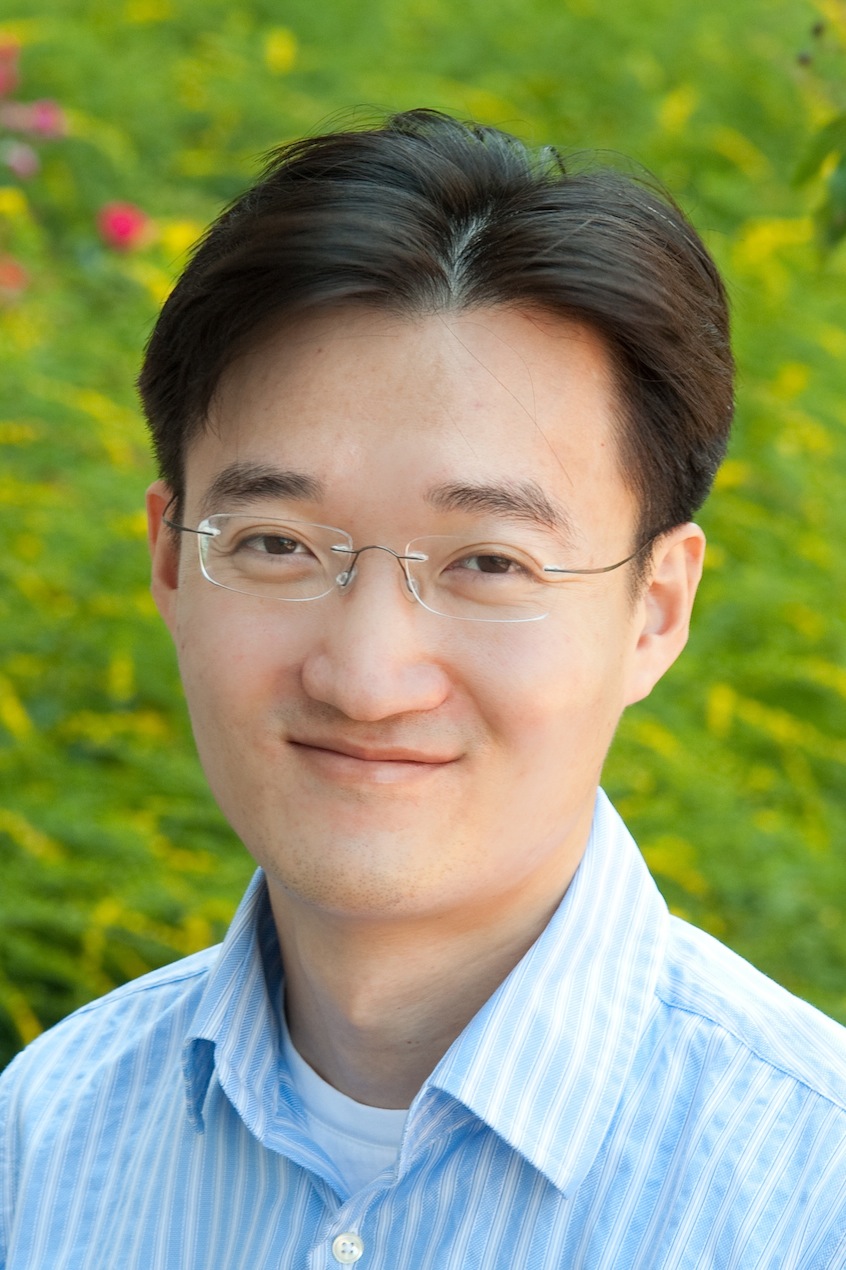}{Taeyoung Lee}
	{is an assistant professor of the Department of Mechanical and Aerospace Engineering at the George Washington University. He received his doctoral degree in Aerospace Engineering and his master's degree in Mathematics at the University of Michigan in 2008. His research interests include computational geometric mechanics and control of complex systems.}

%% when the biography box is located
%% at vertically middle of the page apart from the text body,
%% uncomment next line.
\clearafterbiography\relax

\end{document}